\newtheorem{thm}{Theorem}[section]
\newtheorem{cor}[thm]{Corollary}
\newcommand{\R}{\mathbb{R}}
\newcommand{\Z}{\mathbb{Z}}
\title[Quantitative Singularity]{Lower bounds on concentration through Borel transforms and quantitative singularity of   spectral measures near the arithmetic transition}
\author{Svetlana Jitomirskaya}
\address[Svetlana Jitomirskaya]{ Department of Mathematics, University of California, Berkeley, California 94720, USA}
\email{sjitomi@berkeley.edu}
\author{Wencai Liu}
\address[Wencai Liu]{Department of Mathematics, Texas A\&M University, College Station, TX 77843-3368, USA}\email{liuwencai1226@gmail.com; wencail@tamu.edu}
\author{Serguei  Tcheremchantsev}
\address[Serguei  Tcheremchantsev]{UMR 6628-MAPMO, Universit\'{e} d' Orl\'{e}ans, B. P.  6759, F-45067 Orl\'{e}ans Cedex, France} \email{serguei.tcheremchantsev@univ-orleans.fr}
\def\eps{\epsilon}
\theoremstyle{plain}
\newtheorem{theorem}{Theorem}[section]
\newtheorem{lemma}[theorem]{Lemma}
\theoremstyle{definition}
\newtheorem{definition}[theorem]{Definition}
\newtheorem{remark}[theorem]{Remark}
\begin{document}


\begin{abstract}
We develop tools to study arithmetically induced singular continuous
spectrum in the neighborhood of the arithmetic transition in the hyperbolic regime. This leads to first transition-capturing upper bounds on packing and
multifractal dimensions of spectral measures. We achieve it through the proof of partial localization of generalized eigenfunctions, another first result of its kind in the singular continuous regime. The proof is based also on a 
 general criterion for lower bounds on concentrations of Borel measures  as a corollary of boundary
behavior of their Borel-type transforms, that may be of wider use and independent interest.
\end{abstract}
\maketitle

\section{Introduction}

In the physics literature, the positivity of the Lyapunov exponent $L(E)$ (see \eqref{L}) is routinely considered as a signature of localization of one-dimensional Schr\"odinger operators. However it is well known that such positivity can well coexist with singular continuous spectrum. One-frequency quasiperiodic operators are operators on 
$   \ell^2(\mathbb{Z})$ of the form
 \begin{equation}\label{qp}
 (H_{v,\alpha,\theta}u)(n)=u({n+1})+u({n-1})+ v (\theta+n\alpha)u(n),  
 \end{equation}
where $v$ is a $1$-periodic real-valued function, $\alpha $ is the frequency, and $\theta $ is the phase.
For them, the transitions between pure point and singular continuous spectrum in the regime of positive Lyapunov exponents are known, in several prominent examples, e.g. \cite{ayz,jl1,simonmar,jlmar,jkach,gyz, gj}, to be sharp and, for a.e. phase, determined by the interplay of the positive Lyapunov exponent and an arithmetic
coefficient $\beta(\alpha)$, 

 \begin{equation}\label{Def.Beta}
  \beta= \beta(\alpha)=\limsup_{n\rightarrow\infty}\frac{\ln q_{n+1}}{q_n},
 \end{equation}
that measures the
exponential strength of frequency resonances. Here  $ \frac{p_n}{q_n} $  are  the continued fraction  approximants   of  $\alpha.$  Those sharp arithmetic transitions are expected to be universal for a large class of operators \eqref{qp} \cite{jl1,jcong22,gj,wm}. Indeed, under very weak conditions on the potential (e.g. $v$ of bounded variation is enough) 
there is no point spectrum supported on $\{E: L(E) <\beta(\alpha)\}$
\cite{jkach2}, while a.e. localization/point spectrum on $\{E: L(E) >\beta(\alpha)\}$ has been proved, besides the almost Mathieu operator \cite{ayz,jl1,gyz}, for all operators of type I \cite{gj} and all monotone potentials \cite{jkach}.

For the localization side, there are currently two approaches to sharp transition results: through the analysis of the eigenfunctions, first developed in \cite{jl1} and, for analytic potentials, through dual reducibility \cite{ayz,gyz}. Both provide a very simple and satisfactory picture of why the transition happens where it does: the exponential decay of the eigenfunctions deteriorates as the parameters approach the transition, and the norms of reducing matrices explode with the transition approach in the reducibility-based proofs.

No description like that exists for the singular continuous spectrum. The Gordon-based proofs of absence of point spectrum below the transition show just that: that there are no decaying solutions, not distinguishing in any way between the situations far from the transition point and near to it. Particularly, there have been no quantity that would capture the continuity of the singular continuous spectrum and known to tend to zero as the transition is approached.

Taking the prototypical 1D
quasiperiodic model, the almost Mathieu operator 
 \begin{equation}\label{Def.AMO}
 (H_{\lambda,\alpha,\theta}u)(n)=u({n+1})+u({n-1})+ 2\lambda \cos 2\pi (\theta+n\alpha)u(n),  
 \end{equation}
where $\lambda>0$ is called the coupling, 
 as a case in point, the transition, for arithmetically explicit a.e. phase,  does happen at $L=\beta(\alpha)$, where $L=\ln\lambda$ is the Lyapunov exponent, which is constant in $E$ on
the spectrum \cite{bj}.
Since for $\beta(\alpha)<L=\ln\lambda$ there is no singular
continuous spectrum at all, it is natural to expect that for
$\beta(\alpha)$ just above $L=\ln\lambda$ the singular continuous spectrum is
``more singular" than for higher values $\beta(\alpha),$ and, more
generally, that there is a way to capture the sharp transition through some quantitative property of the singular continuous spectra depending on the resonance
strength quantified by $\beta(\alpha)$.

The most standard candidate to distinguish between different
kinds of singular continuous spectra is Hausdorff dimension. However,
in the regime of positive Lyapunov exponents, Hausdorff dimension is always $0,$ at least for a.e. phase (proved by B. Simon \cite{simpot} as a corollary of the potential theory) and often for all phases (see a series of results, starting with \cite{jlast1} and with recent advances in \cite{jp,liu,ss,lpw}). Intuitively, Hausdorff dimension is associated with $\liminf$\footnote{Note that the denominator is negative, so $liminf$ measures the {\it largest} power-law of the concentration along a subsequence.} of the power-law behavior of the concentration function of the measure. The zero upper bound, using the techniques of \cite{jlast1,jlast2}, can be achieved through controlled occasional largeness of transfer matrices, which, not surprisingly, does happen in the regime of positive Lyapunov exponents. 

In \cite{jz} the
authors showed that dimensions associated with $\limsup$ of the power-law behavior of the concentration function, e.g. the packing dimension, can be expected to provide meaningful information in the positive Lyapunov exponents regime. In particular, for quasiperiodic operators \eqref{qp} packing dimension of any spectral measure $\mu$ satisfies ${\rm dim}^+_P(\mu)> 1-C\frac L\beta,$ where ${\rm dim}^+_P$ is defined in Section \ref{Fractal Dimension Defs}. As a corollary, there is a sharp arithmetic result for analytic potentials at the ``most continuous" end of the scale: in the regime of positive Lyapunov exponents, spectral measures of operators \eqref{qp} with analytic potentials $v$ have spectral dimension\footnote{A $\limsup$-based quantity that is always smaller than packing dimension. The highest dimension is $1.$} $1$ if and only if $\beta(\alpha)=\infty.$ However, the singularity bound of \cite{jz} also depended on the basic transfer-matrix estimates only, and, while effective at the most continuous end, was too crude to capture the spectral transition. Indeed, following the proof of Theorem 5 in \cite{jz}, one can see that, in the regime $L\sim\beta$ the upper bound on spectral dimension in Theorem 5 takes the form $\frac1{1+\frac{cL^2}{\beta}}$, thus staying bounded away from zero as $\beta\to L.$

Here we develop tools, based on estimates of generalized eigenfunctions, to prove upper bounds in the regime of positive Lyapunov exponents that do capture the transition from the singular continuous side. 

We formulate our main result for the almost Mathieu operator, however, we believe that the same formulation with $\ln \lambda$ replaced by $\min L(E)$ should work for the other cases where sharp arithmetic transition has been established, as should the key elements of our technique.

For any irrational  $\alpha$, we say that  phase $\theta\in (0,1)$   is
 $\alpha$-Diophantine, if there exist $ \kappa>0$ and $\nu>0$ such that
 \begin{equation}\label{DCtheta}
   ||2\theta-k\alpha||_{\mathbb{R}/\mathbb{Z}} \geq \frac{\kappa}{|k|^{\nu}},
 \end{equation}
 for any $k\in \mathbb{Z} \backslash \{0\}
 $, where $||x||_{\mathbb{R}/\mathbb{Z}}={\rm dist}(x,\mathbb{Z})$.
 Clearly, for any irrational number $\alpha$, the set of 
$\alpha$-Diophantine phases is of full Lebesgue measure.
 \par
  Recall that each  vector $\varphi\in \ell^2(\mathbb{Z})$ has a unique corresponding spectral measure $\mu_{\varphi}$, defined as
\begin{equation}\label{G.defspectralmeasure}
    ((H-z)^{-1}\varphi,\varphi)=\int\frac{d\mu_{\varphi}(x) }{x-z}.
\end{equation}
Let ${\rm dim}^+_P(\mu)$ denote the upper packing dimension of a Borel measure $\mu,$ see Section \ref{Fractal Dimension Defs}.
\begin{thm}\label{Maintheoremp}
For $\alpha\in \mathbb{R}\backslash \mathbb{Q}$,  
the following holds for the almost Mathieu operator \eqref{Def.AMO}, any $\alpha$-Diophantine phase  $\theta $ and any $\varphi\in \ell^2(\mathbb{Z})$,
 \begin{description}
 \item[Case 1] if $\lambda\ge e^{\beta(\alpha)}$,    then  ${\rm dim}^+_P(\mu_{\varphi })=0$.
 \item[Case 2]if 
 $\lambda< e^{\beta(\alpha)}$,   then     ${\rm dim}^+_P(\mu_{\varphi })\leq 2(1-\frac{\ln\lambda}{\beta(\alpha)})$.

 \end{description}
\end{thm}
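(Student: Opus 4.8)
The plan is to bound $\mathrm{dim}^+_P(\mu_\varphi)$ for arbitrary $\varphi\in\ell^2(\mathbb Z)$ by first reducing to the two canonical vectors $\delta_0,\delta_1$. Since $\{\delta_0,\delta_1\}$ is cyclic for the whole-line operator $H=H_{\lambda,\alpha,\theta}$, one has $\mu_\varphi\ll\mu_{\delta_0}+\mu_{\delta_1}$, and by the Besicovitch differentiation theorem $\nu\ll\mu$ forces $\mathrm{dim}^+_P(\nu)\le\mathrm{dim}^+_P(\mu)$; so it suffices to prove the asserted bound for $j\in\{0,1\}$. By the characterization of the upper packing dimension in Section \ref{Fractal Dimension Defs}, $\mathrm{dim}^+_P(\mu_{\delta_j})\le s$ is equivalent to the statement that for $\mu_{\delta_j}$-a.e.\ $E$, every $\eta>0$, and all small $\epsilon>0$ one has $\mu_{\delta_j}(B(E,\epsilon))\ge\epsilon^{\,s+\eta}$ --- a \emph{lower} bound on the concentration of the spectral measure at \emph{every} small scale, with $s=0$ in Case~1 and $s=2\bigl(1-\tfrac{\ln\lambda}{\beta(\alpha)}\bigr)$ in Case~2.

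Next I would invoke the general criterion relating boundary behavior of a Borel transform to lower bounds on the underlying measure. Applied to $F_{\delta_j}(z)=\langle(H-z)^{-1}\delta_j,\delta_j\rangle=G(j,j;z)$, it reduces the previous goal to exhibiting, for $\mu_{\delta_j}$-a.e.\ $E$, a sequence of scales $\epsilon_n\downarrow 0$ --- not too sparse on the logarithmic scale --- along which $\operatorname{Im}G(j,j;E+i\epsilon_n)$ is as large as $\epsilon_n^{-\gamma}$ for $\gamma$ close to $1-\ln\lambda/\beta(\alpha)$; the criterion then both turns this into a lower bound for $\mu_{\delta_j}(B(E,\epsilon_n))$ and interpolates over the intervening scales, and the two bounded losses incurred (the passage from the transfer-matrix/Borel-transform length $\|A_{L(\epsilon)}\|^2\epsilon\asymp 1$ to measure scales, and the logarithmic gap between consecutive resonance-induced good scales) together produce the factor $2$ in front of $1-\ln\lambda/\beta(\alpha)$. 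This step is soft and isolates the measure-theoretic bookkeeping from the analysis of the difference equation.

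The heart of the proof is to verify the required largeness of $G(j,j;E+i\epsilon_n)$, which I would extract from partial localization of generalized eigenfunctions. For $\mu_{\delta_j}$-a.e.\ $E$ fix the polynomially bounded generalized eigenfunction $\psi=\psi_E$. The mechanism: the near-resonance $\|q_n\alpha\|_{\mathbb R/\mathbb Z}\le 1/q_{n+1}$ makes the transfer cocycle of $H_{\lambda,\alpha,\theta}$ close to that of the $q_n$-periodic operator $H_{\lambda,p_n/q_n,\theta}$ on windows of length up to $\sim\beta(\alpha)q_n/\ln\lambda$, while positivity of the Lyapunov exponent $L=\ln\lambda$ forces $\psi$ to decay exponentially at rate $\approx L$ away from the resonant sites $q_n\mathbb Z$; hence on the window where the periodic approximation holds the $\ell^2$-mass of $\psi$ is concentrated in the controlled way, and the Diophantine phase condition \eqref{DCtheta} guarantees the revivals of $\psi$ at the remaining resonant sites do not spoil this. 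Comparing $\psi$ (the subordinate solution on each half-line) with the complementary, faster-growing solution at the scale dictated by $\|A_{L(\epsilon_n)}\|^2\epsilon_n\asymp 1$, one obtains --- via the standard representation of $G(j,j;z)$ through the Weyl solutions together with the Jitomirskaya--Last inequality --- a lower bound $\operatorname{Im}G(j,j;E+i\epsilon_n)\ge\epsilon_n^{-\gamma}$ with $\gamma=1-\ln\lambda/\beta(\alpha)-o(1)$ along a sequence $\epsilon_n$ of the required density, which is exactly the input the criterion demands. Case~1 ($\lambda\ge e^{\beta(\alpha)}$) is the boundary case $\ln\lambda/\beta(\alpha)\to1$ of this estimate, giving $\mathrm{dim}^+_P=0$; for $\lambda>e^{\beta(\alpha)}$ it also follows from the arithmetic exponential localization of \cite{jl1}, which makes $\mu_\varphi$ pure point.

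The step I expect to be the main obstacle is this last one --- it is also the novel ingredient of the paper. Making ``partial localization'' quantitatively sharp requires controlling the transfer matrices $A_N(E)$ uniformly for $N\sim q_n$ precisely when $q_{n+1}$ is enormously larger than $q_n$, i.e.\ in the regime that realizes $\beta(\alpha)$; one must show that, although $\psi_E\notin\ell^2$ --- it revives at the resonant sites, consistently with the absence of point spectrum below the transition --- these revivals are tame enough that the comparison of the subordinate and complementary solutions at the relevant scale loses only the factor governed by $\ln\lambda/\beta(\alpha)$. A secondary difficulty is keeping the two-ended (whole-line) analysis and the overlap with the specific vectors $\delta_0,\delta_1$ under control, rather than with a generic truncated eigenfunction. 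Once the good scales $\epsilon_n$ and the exponent $1-\ln\lambda/\beta(\alpha)$ are pinned down, the passage to $2\bigl(1-\ln\lambda/\beta(\alpha)\bigr)$ is the soft interpolation built into the Borel-transform criterion.
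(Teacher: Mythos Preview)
Your overall architecture matches the paper's: reduce to $\mu_{\delta_0},\mu_{\delta_1}$ via cyclicity and \eqref{glast11}, establish partial localization of generalized eigenfunctions in the resonant/nonresonant windows (this is indeed the main novelty, Section~\ref{SPartiallocalization}), convert this through KKL-type machinery into a lower bound on the imaginary part of the whole-line Borel transform, and feed that into the criterion (Corollary~\ref{CD+}). Your qualitative picture of partial localization --- exponential decay at rate $\approx\ln\lambda$ between resonant sites in $q_n\mathbb Z$, with the $\alpha$-Diophantine condition on $\theta$ controlling the phase resonances --- is also correct.

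There are, however, two genuine quantitative gaps.

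\emph{The exponent is backwards.} You claim $\operatorname{Im}G(j,j;E+i\epsilon)\ge\epsilon^{-\gamma}$ with $\gamma$ close to $1-\ln\lambda/\beta$. But then $\gamma\to 0$ as $\ln\lambda\to\beta$: the Borel transform would be merely bounded exactly when the packing dimension is supposed to vanish. The paper obtains (Lemma~\ref{Le.im1}, via Lemma~\ref{Le.4transfer} and Theorem~\ref{kklthm}) the bound $\operatorname{Im}M_j(E+i\epsilon)\ge\epsilon^{-t}$ for every $t<\frac{\ln\lambda}{2\beta-\ln\lambda}$, which \emph{increases} to $1$ as $\ln\lambda\to\beta$. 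Plugging $\varsigma=1-t$ arbitrarily close to $\frac{2\beta-2\ln\lambda}{2\beta-\ln\lambda}$ into Corollary~\ref{CD+} with $m=2$ gives $\mathrm{dim}_P^+\le\frac{2\varsigma}{2-\varsigma}=2(1-\ln\lambda/\beta)$. \emph{That} arithmetic --- the formula $m\varsigma/(m-\varsigma)$ with $m=2$ for the ordinary Borel transform --- is the sole source of the factor~$2$; there is no ``two bounded losses'' mechanism.

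\emph{Liminf, not a sequence.} The hypothesis of Theorem~\ref{D+}/Corollary~\ref{CD+} is $\liminf_{\epsilon\to 0^+}\epsilon^{-\varsigma}J_{\mu,2}(x,\epsilon)>0$: the bound must hold for \emph{all} small $\epsilon$. A subsequence (limsup) hypothesis only yields the Hausdorff-type conclusion of Theorem~\ref{D-}, which is useless here. The paper secures the all-scales bound by proving (Lemma~\ref{Le.4transfer}) that
\[
\omega(L):=\max_x\|u_x\|_{L,0}\cdot\min_x\|u_x\|_{L,0}\ \ge\ L^{\,1+\frac{\ln\lambda}{2t_1\beta}-\varepsilon}
\]
for \emph{every} large $L$, via a three-case analysis according to where $L$ sits relative to $q_n$, $q_n^2 q_{n+1}^{t_1}$, and $b_{n+1}$; the partial-localization Theorem~\ref{Th.resonant} is the input for the middle range. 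The relevant length scale is $\omega(L(\epsilon))=1/\epsilon$ from Theorem~\ref{kklthm}, not the Jitomirskaya--Last scale $\|A_L\|^2\epsilon\asymp 1$, and an additional algebraic step (Lemma~\ref{Le.im}) is needed to pass from the half-line $\tilde m_j$ to the whole-line $M_j$. There is no interpolation over ``resonance-induced good scales''; since consecutive $q_n$ can differ by a factor $e^{\beta q_n}$, such an interpolation would in any case be far too lossy.
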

\begin{remark}
	\begin{itemize}
	 
		\item If $\lambda>e^{\beta(\alpha)}$ and $\theta$ is Diophantine with respect to $\alpha$,   the spectral measure  $\mu_{\varphi}$  is supported  on a countable set (pure point spectrum)  \cite{jl1} so only the result for $\lambda=e^{\beta(\alpha)}$ is new in Case 1. In this case singular continuous spectrum does occur depending on further arithmetic properties of $\alpha$ \cite{ajz}.
		\item If $0<\lambda<1$, then for all $(\alpha,\theta)\in \R ^2$,
		the spectral measure $\mu_{\varphi}$  is purely absolutely continuous \cite{avila2008absolutely} so of packing dimension $1$. We do not address the packing dimension at $\lambda=1,$ where the spectrum is singular continuous in absence of hyperbolicity.
  \item The result of Case 2 is only meaningful for  $\lambda>e^{\frac{1}{2}\beta(\alpha)}$ so, unlike the one of \cite{jz}, our estimate does not say anything for $\beta(\alpha)\gg L$, also suggestive that the factor of $2$ may be not sharp.
	\end{itemize}
\end{remark}

A finer popular characteristic of singular continuous measures is its family of multifractal dimensions $D^\pm_\mu(q),$ see Section \ref{Fractal Dimension Defs}. It turns out these dimensions also capture the arithmetic transition.

\begin{thm}\label{Maintheoremp1}
	For $\alpha\in \mathbb{R}\backslash \mathbb{Q}$,  
the following holds for the almost Mathieu operator \eqref{Def.AMO}, any $\alpha$-Diophantine phase  $\theta $,  any initial vector $\varphi$,
	and     $ q \ge  { \frac{3} {2}}$, 
	$$
	D_{\mu_{\varphi}}^+(q) \le \frac{2\beta(\alpha) -2\ln \lambda}{2\beta(\alpha) -\ln \lambda}.
	$$

\end{thm}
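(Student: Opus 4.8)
We prove Theorem~\ref{Maintheoremp1}. Fix $\varphi\in\ell^2(\mathbb{Z})$ and write $\mu=\mu_\varphi$, $L=\ln\lambda$, $\beta=\beta(\alpha)$; recall that for $\mu$-a.e.\ $x$ there is a polynomially bounded solution $u_x$ of $H_{\lambda,\alpha,\theta}u=xu$ with $\sum_n\varphi(n)\overline{u_x(n)}\neq0$, and that in the regime $1<\lambda<e^{\beta}$ this generalized eigenfunction has, along the resonant scales, the partially localized (``valley--bump'') profile established above. The plan is to feed this profile into the general concentration criterion, which converts boundary growth of a Borel-type transform of a measure into a pointwise lower bound on the measure of small intervals.

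\emph{Reduction to concentration.} For $q>1$ one has $D^+_{\mu}(q)=\frac{1}{q-1}\limsup_{\epsilon\to0^+}\frac{\ln S_\mu(q,\epsilon)}{\ln\epsilon}$ with $S_\mu(q,\epsilon)=\int\mu([x-\epsilon,x+\epsilon])^{q-1}\,d\mu(x)$; since $\ln\epsilon<0$, bounding $D^+_\mu(q)$ from above is the same as bounding $S_\mu(q,\epsilon)$ from below for \emph{all} small $\epsilon$. Thus it suffices to exhibit, for every small $\epsilon$, a Borel set $A_\epsilon\subseteq\operatorname{supp}\mu$ with $\mu(A_\epsilon)\ge\epsilon^{d(\epsilon)}$ and $\mu([x-\epsilon,x+\epsilon])\ge\epsilon^{g(\epsilon)}$ for all $x\in A_\epsilon$: then $S_\mu(q,\epsilon)\ge\epsilon^{d(\epsilon)+(q-1)g(\epsilon)}$, so $D^+_\mu(q)\le\limsup_{\epsilon\to0^+}\!\big(\frac{d(\epsilon)}{q-1}+g(\epsilon)\big)\le\limsup_{\epsilon\to0^+}\!\big(2d(\epsilon)+g(\epsilon)\big)$, using $q\ge\frac32$ (so $\frac1{q-1}\le2$) and $d(\epsilon)\ge0$. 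Hence the target reduces to $\limsup_{\epsilon\to0^+}(2d(\epsilon)+g(\epsilon))\le\frac{2\beta-2L}{2\beta-L}$, which then proves the theorem for all $q\ge\frac32$ at once.

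\emph{Producing the concentration bound.} By the concentration criterion it suffices, for each small $\epsilon$, to lower bound a Borel-type transform of $\mu$ — concretely the Poisson integral $\operatorname{Im}F(x+i\epsilon)$, $F(z)=\int\frac{d\mu(x)}{x-z}$ — on a set of $x$ of controlled $\mu$-measure; the criterion then returns the sought lower bound on $\mu$ of an interval around $x$, from which $g(\epsilon)$ and $d(\epsilon)$ are read off. Through the Green's-function representation, $\operatorname{Im}F(x+i\epsilon)$ is governed by the $\ell^2$ solution $\psi_+(\cdot,x+i\epsilon)$, and the task becomes to show that for $\mu$-typical $x$ this decaying solution \emph{fails to contract at the full Lyapunov rate at the resonant scale}: one transfers the real-energy partial localization to the complex energy $x+i\epsilon$ via the standard comparison of transfer matrices at $x+i\epsilon$ and at $x$ over lengths $n\lesssim\epsilon^{-1}$, where the complex perturbation costs only $e^{O(\epsilon n)}$. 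The key point making this work for \emph{all} small $\epsilon$, not just a subsequence, is that the reconstitution of $u_x$ near a resonance $q_{n_k}$ — where $\ln q_{n_k+1}\approx\beta q_{n_k}$ — persists for all $\epsilon$ down to $\sim q_{n_k+1}^{-1}$, i.e.\ over exactly the range of scales that must be covered before the next resonance takes over.

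\emph{Bookkeeping and the obstacle.} It then remains to run the arithmetic. For $\epsilon$ in the window governed by $q_{n_k}$ one balances the depth of the valley of $\psi_+$ (of order $e^{-Lq_{n_k}}$) against its partial reconstitution near the mirror point $q_{n_k}$, obtains the requisite lower bound on $\operatorname{Im}F(x+i\epsilon)$, feeds it through the criterion to get $g(\epsilon)$, and reads $d(\epsilon)$ off the measure of phases supporting the reconstitution; optimizing over the choice of scale within the window, the worst case — occurring at $\epsilon\approx q_{n_k+1}^{-1}$ — produces $\limsup(2d(\epsilon)+g(\epsilon))=\frac{2\beta-2L}{2\beta-L}$, the factor encoding the trade-off between the size of the good set and the strength of the concentration on it. I expect the crux to be the complex-energy transfer: upgrading the real-energy statement about \emph{generalized} eigenfunctions to a quantitative non-decay statement for the genuinely $\ell^2$ solution at $x+i\epsilon$, uniformly across the whole inter-resonance interval in $\epsilon$ rather than merely along the resonant subsequence, while controlling the $\mu$-exceptional $x$ where the valley--bump profile degenerates; a secondary difficulty is ensuring that the interval returned by the criterion is short enough to keep $g(\epsilon)$, and hence $2d(\epsilon)+g(\epsilon)$, below the target.
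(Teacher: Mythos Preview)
Your proposal is a sketch with explicitly acknowledged gaps (``I expect the crux to be\ldots''), not a proof. More seriously, the strategy as outlined cannot reach the stated bound.

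The core problem is the reduction. You plan to bound $S_\mu(q,\epsilon)$ from below via a pointwise concentration estimate $\mu([x-\epsilon,x+\epsilon])\ge\epsilon^{g(\epsilon)}$ on a set of $\mu$-measure $\ge\epsilon^{d(\epsilon)}$, obtaining $g(\epsilon)$ by first lower-bounding $\epsilon\,\Im F(x+i\epsilon)$ and then invoking the concentration criterion (Theorem~\ref{D+}). But the Borel-transform input the partial-localization analysis actually delivers is $\epsilon\,\Im M_j(E+i\epsilon)\ge\epsilon^{\varsigma}$ for all small $\epsilon$ and $\mu$-a.e.\ $E$, with any $\varsigma>\frac{2\beta-2L}{2\beta-L}$ (this is \eqref{mera04E}, obtained through the real-energy KKL bound of Theorem~\ref{kklthm} together with Lemma~\ref{Le.4transfer}; no complex-energy transfer of the decaying $\ell^2$ solution is needed or used). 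Feeding this into Theorem~\ref{D+} with $m=2$ yields only $g=\frac{2\varsigma}{2-\varsigma}$ on a set of full measure, so $d(\epsilon)=0$ and $2d(\epsilon)+g(\epsilon)\to\frac{2(\beta-L)}{\beta}$. That is the packing-dimension bound of Theorem~\ref{Maintheoremp}, strictly larger than the target $\frac{2\beta-2L}{2\beta-L}$ whenever $L<\beta$. Introducing a trade-off with $d(\epsilon)>0$ does not help: Theorem~\ref{D+} already extracts the best pointwise concentration exponent the Borel-transform hypothesis affords, and no stronger hypothesis is available on any subset.

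The paper avoids this loss by never passing through a pointwise concentration bound. Theorem~\ref{upD+} compares the integrated quantity $I(\epsilon)=\int J_{\mu,2}^{\,p}(x,\epsilon)\,d\mu(x)$ directly to $S_\mu(p+1,\epsilon)$ via a decomposition and H\"older (see \eqref{newequa16}--\eqref{newequa17}), giving $S_\mu(p+1,\epsilon)\ge c\,\epsilon^{p\varsigma}$ and hence $D^+_\mu(p+1)\le\varsigma$ for every $p\in(1/2,1]$; the restriction $q\ge\frac32$ is exactly $q\ge 1+\frac1m$ with $m=2$. The point is that $J_{\mu,2}(x,\epsilon)$ aggregates the measure over all scales at once, and the H\"older step exploits this multiscale structure; collapsing first to a single-scale concentration inequality discards precisely the information needed for the sharper exponent.
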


An important part of our proof is a general result allowing to estimate packing and multifractal dimensions of Borel measures on $\mathbb{R}$ from {\it above} through the boundary behavior of their Borel-type transforms.

Power-law subordinacy theory \cite{jlast1,jlast2} has linked \footnote{The so-called Jitomirskaya-Last inequality} certain properties of formal solutions to $Hu=Eu$ of general one-dimensional operators
\begin{equation}\label{1d}
 (Hu)(n)=u({n+1})+u({n-1})+ v_n u(n),  
 \end{equation}
to the boundary behavior of the $m$-function, which is the Borel transform of a certain spectral measure. The general link between dimensions and boundary behavior of Borel transforms of Borel measures goes back to \cite{djls}. It is obvious that
\begin{equation}\label{borel}
    \mu([x-\epsilon,x+\epsilon])\leq 2\epsilon \Im \int_\mathbb{R} \frac{d\mu(y)}{x-y-i\epsilon}.
\end{equation}
This enables immediate bounds of Hausdorff dimension (defined via $\liminf {{\rm log} \mu ([x-\eps, x+\eps]) \over {\rm log} \eps}$) through the behavior of the $m$-function, and therefore properties of solutions. However, it is trickier to bound from above the $\limsup{{\rm log} \mu ([x-\eps, x+\eps]) \over {\rm log} \eps}$ \footnote{Since $\log \epsilon <0$ this is equivalent to bounding $\mu([x-\epsilon,x+\epsilon])$ from below} responsible for the packing dimension; that's why the authors of \cite{jz} for their upper bounds dealt with so-called spectral dimension defined directly through the boundary behavior of the $m$-function rather than the asymptotics of the concentration function.

Here we provide a general criterion allowing, in particular, to bound packing dimension of a measure from above (or concentration from below) through the boundary behavior of the Borel transform, that was previously missing.\footnote{As we were finalizing this work, a different approach to bounding $\mu([x-\epsilon,x+\epsilon])$ from below from the properties of Borel transform was developed, in a different context in \cite{lyz}. There the authors obtained a precise universal asymptotic of concentration functions in the absolutely continuous regime, a very delicate result. Their lower bound on concentration did require certain further input.} This comes as a particular case of a  family of results for new Borel-type transforms, that may be of independent interest.

Since  point spectrum part of a Borel measure always has fractal dimension zero, we will assume a Borel measure 
$\mu$ is continuous. We also assume $\mu$ is bounded, $\mu(\R)<\infty$.



 Let $x \in {\rm supp} \mu$.
Define the local exponents of the concentration function of $\mu$:

\begin{equation}\label{gamma-}
 \gamma^-_{\mu}(x)=\liminf_{\eps \to 0+}  {{\rm log} \mu ([x-\eps, x+\eps]) \over {\rm log} \eps},
\end{equation}
and
\begin{equation}\label{gamma+}
 \gamma^+_{\mu}(x)=\limsup_{\eps \to 0+}  {{\rm log} \mu ([x-\eps, x+\eps]) \over {\rm log} \eps}.
\end{equation}

Assume  $m>0$.   We define a new family of transformations, that we call $m$-Borel transforms by
\begin{equation}\label{mera04}
J_{\mu,m}(x,\eps) := \eps^{m} \int_{\R} {d \mu(y) \over |x-y|^m +\eps^m}.
\end{equation}
Note that $J_{\mu,2}(x,\eps)=\epsilon\Im \int_\mathbb{R} \frac{d\mu(y)}{x-y-i\epsilon},$ so we recover the imaginary part of the standard Borel transform.

\begin{thm}\label{D+}
		Let $m>\varsigma\geq 0$.
Assume that 
\begin{equation*}
   \liminf_{\eps\to 0+} \eps^{- \varsigma}J_{\mu,m}(x,\eps)>0.
 \end{equation*}
 Then
\begin{equation}\label{me14}
\gamma_\mu^+(x) \le {\varsigma (m-\gamma_\mu^-(x)) \over m-\varsigma}.
\end{equation}
In particular,
\begin{equation}\label{inme14}
\gamma_\mu^+(x) \le {m\varsigma   \over m-\varsigma}.
\end{equation}
\end{thm}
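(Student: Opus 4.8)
The plan is to transfer the assumed lower bound on the $m$-Borel transform into a \emph{pointwise} lower bound on the concentration function, but evaluated at a \emph{rescaled} radius, and then read \eqref{me14} off that bound. Write $F(\eta):=\mu([x-\eta,x+\eta])$; this is nondecreasing, and $F(\eta)\to\mu(\{x\})=0$ as $\eta\to0+$ by continuity of $\mu$. The hypothesis says precisely that $J_{\mu,m}(x,\eps)\ge c_0\eps^\varsigma$ for all $\eps$ below some $\eps_1$, with a fixed $c_0>0$ (one may assume $\varsigma>0$, since for $\varsigma=0$ the hypothesis cannot hold for continuous $\mu$, as $J_{\mu,m}(x,\eps)\to0$ by dominated convergence). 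The first ingredient is the elementary comparison obtained by integrating by parts in \eqref{mera04}, or by splitting its integral over the dyadic annuli $2^j\eps\le|x-y|<2^{j+1}\eps$: for a constant $C_m$ depending only on $m$,
\begin{equation*}
 J_{\mu,m}(x,\eps)\;\le\;F(\eps)+C_m\,\eps^m\int_\eps^\infty\frac{F(\eta)}{\eta^{m+1}}\,d\eta,\qquad \eps>0 .
\end{equation*}

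Feeding $F(\eta)\le\eta^{s}$ (valid for small $\eta$, any $s<\gamma^-_\mu(x)$) and $F(\eta)\le\mu(\R)$ into this inequality, one checks the right-hand side is $o(\eps^\varsigma)$ as soon as $s>\varsigma$ (here one also uses $\varsigma<m$); hence the hypothesis forces $\gamma^-_\mu(x)\le\varsigma$, and in particular $\gamma^-:=\gamma^-_\mu(x)$ is finite and $<m$. Now fix a small $\delta>0$ and any $t$ with $0<t<\tfrac{m-\varsigma}{m-\gamma^-+\delta}$, a nonempty subinterval of $(0,1)$ by the preceding line, and let $\eps_0=\eps_0(\delta)>0$ be such that $F(\eta)\le\eta^{\gamma^--\delta}$ for $0<\eta<\eps_0$. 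The crux is to show that $F(\eps^t)\ge c\,\eps^\varsigma$ for all sufficiently small $\eps$, with $c>0$ independent of $\eps$. Fix such an $\eps$ and feed the hypothesis into the comparison inequality. Either $F(\eps)\ge\tfrac{c_0}{2}\eps^\varsigma$, in which case monotonicity and $\eps^t>\eps$ give $F(\eps^t)\ge\tfrac{c_0}{2}\eps^\varsigma$; or $C_m\eps^m\int_\eps^\infty F(\eta)\eta^{-m-1}\,d\eta\ge\tfrac{c_0}{2}\eps^\varsigma$, and we split this integral at $\eta=\eps^t$. Over $(\eps,\eps^t)$ bound $F(\eta)\le F(\eps^t)$, contributing at most $F(\eps^t)\,\eps^{-m}/m$; over $(\eps^t,\infty)$ use $F(\eta)\le\eta^{\gamma^--\delta}$ on $(\eps^t,\eps_0)$ and $F(\eta)\le\mu(\R)$ on $(\eps_0,\infty)$, contributing at most $C'\eps^{t(\gamma^--\delta-m)}+C''$. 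The choice of $t$ makes $t(\gamma^--\delta-m)>\varsigma-m$, so $\eps^{t(\gamma^--\delta-m)}=o(\eps^{\varsigma-m})$, and since $\varsigma<m$ the constant $C''$ is $o(\eps^{\varsigma-m})$ too; both error terms being negligible against $\tfrac{c_0}{2}\eps^{\varsigma-m}$, we are forced to conclude $F(\eps^t)\,\eps^{-m}\gtrsim\eps^{\varsigma-m}$, i.e. $F(\eps^t)\gtrsim\eps^\varsigma$. This proves the claim.

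To finish, substitute $\rho=\eps^t$, which exhausts all sufficiently small positive radii as $\eps$ does: the claim becomes $F(\rho)\ge c\,\rho^{\varsigma/t}$ for all small $\rho$, so by \eqref{gamma+},
\begin{equation*}
 \gamma^+_\mu(x)=\limsup_{\rho\to0+}\frac{\log F(\rho)}{\log\rho}\;\le\;\frac{\varsigma}{t}.
\end{equation*}
Since $\delta>0$ was arbitrary and $t$ any number below $\tfrac{m-\varsigma}{m-\gamma^-+\delta}$, letting $\delta\downarrow0$ and $t\uparrow\tfrac{m-\varsigma}{m-\gamma^-}$ yields $\gamma^+_\mu(x)\le\tfrac{\varsigma(m-\gamma^-)}{m-\varsigma}$, which is \eqref{me14}; the bound \eqref{inme14} follows from $\gamma^-\ge0$.

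I expect the main obstacle to be the middle paragraph, and specifically the following point. The naive estimate --- bounding every $F(\eta)$ inside the integral by its $\gamma^-$-power --- only ever reproduces $\gamma^-_\mu(x)\le\varsigma$ and gives no information on $\gamma^+_\mu(x)$; one must instead bound the \emph{inner} piece of the split integral by the \emph{monotone} quantity $F(\eps^t)$, so that the hypothesis at scale $\eps$ turns into information about $F$ at the larger scale $\eps^t$. Forcing the outer piece $\int_{\eps^t}^\infty$ to be of strictly smaller order than $\eps^{\varsigma-m}$ is exactly why $t$ must be taken \emph{strictly} below the critical ratio $\tfrac{m-\varsigma}{m-\gamma^-}$, and the sharp exponent in \eqref{me14} emerges only in the limit $t\to\tfrac{m-\varsigma}{m-\gamma^-}$. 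The remaining estimates are routine.
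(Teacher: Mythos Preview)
Your proof is correct and follows essentially the same strategy as the paper's: split the contribution to $J_{\mu,m}(x,\eps)$ into a ``near'' piece controlled by $F(\eta)$ and a ``far'' piece controlled via the $\gamma_\mu^-$ bound, then choose the relationship between $\eps$ and $\eta$ so that the far piece is negligible compared to $\eps^\varsigma$. The only differences are cosmetic: the paper works with discrete dyadic sums and, given $\eta$, solves for the matching $\eps$ (via $C\eps^m\eta^{\delta-m}=\tfrac{1}{C_1}\eps^\varsigma$, i.e.\ exactly your critical exponent $t=\tfrac{m-\varsigma}{m-\delta}$), whereas you pass through the integral comparison $J_{\mu,m}(x,\eps)\le F(\eps)+C_m\eps^m\int_\eps^\infty F(\eta)\eta^{-m-1}d\eta$ and parametrize by $\eta=\eps^t$. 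Your formulation has the minor advantage of treating the cases $\gamma_\mu^-(x)=0$ and $\gamma_\mu^-(x)>0$ uniformly (the paper separates them), and you fold the preliminary step $\gamma_\mu^-(x)\le\varsigma$ (the paper's Theorem~\ref{D-}) into the same argument rather than citing it.
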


\begin{cor}\label{CD+}
	Let $m>\varsigma\geq 0$.
Assume that
	\begin{equation*}
	\liminf_{\eps\to 0+} \eps^{- \varsigma}J_{\mu,m}(x,\eps) >0
	\end{equation*}
	for almost every $x$ with respect to measure $\mu$. Then we have
	\begin{equation*}
	{\rm dim}^+_P(\mu) \le m \varsigma /(m-\varsigma).
	\end{equation*}
	Under the further assumption that   $\gamma_\mu^-(x) \ge \delta>0$ for almost every $x$ with respect to $\mu$, we have that
			$$
		{\rm dim}^+_P(\mu) \le \varsigma (m-\delta)/(m-\varsigma).
		$$
\end{cor}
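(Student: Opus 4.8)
The plan is to obtain Corollary~\ref{CD+} from Theorem~\ref{D+} by applying the latter pointwise and then passing to the $\mu$-essential supremum. Let $X\subseteq\operatorname{supp}\mu$ be the set of full $\mu$-measure on which $\liminf_{\eps\to0+}\eps^{-\varsigma}J_{\mu,m}(x,\eps)>0$. Since $m>\varsigma\geq0$, Theorem~\ref{D+} applies at each $x\in X$; in particular \eqref{inme14} gives $\gamma^+_\mu(x)\le m\varsigma/(m-\varsigma)$ for every $x\in X$. I would then invoke the variational description of the upper packing dimension of a measure through its upper local dimension --- recalled in Section~\ref{Fractal Dimension Defs} --- namely that ${\rm dim}^+_P(\mu)$ equals the $\mu$-essential supremum of $x\mapsto\gamma^+_\mu(x)$. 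Since $\mu(\R\setminus X)=0$, this essential supremum is bounded by $m\varsigma/(m-\varsigma)$, which is the first assertion.

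For the refined bound, let $Y\subseteq\operatorname{supp}\mu$ be the full-measure set on which $\gamma^-_\mu(x)\ge\delta$, and restrict attention to $X\cap Y$, still of full $\mu$-measure. The function $t\mapsto\varsigma(m-t)/(m-\varsigma)$ is nonincreasing on $[0,m]$ because $\varsigma\ge0$ and $m>\varsigma$, so \eqref{me14} yields
\begin{equation*}
\gamma^+_\mu(x)\le\frac{\varsigma\bigl(m-\gamma^-_\mu(x)\bigr)}{m-\varsigma}\le\frac{\varsigma(m-\delta)}{m-\varsigma}
\qquad\text{for every }x\in X\cap Y.
\end{equation*}
Taking the $\mu$-essential supremum and using the same variational identity gives ${\rm dim}^+_P(\mu)\le\varsigma(m-\delta)/(m-\varsigma)$.

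This corollary carries no analytic content beyond Theorem~\ref{D+}; the only step that is not completely formal is the passage from the pointwise bound on $\gamma^+_\mu$ to the bound on ${\rm dim}^+_P(\mu)$, which uses the classical fact that the upper packing dimension of a measure coincides with the $\mu$-essential supremum of its upper local dimension. I would cite this from Section~\ref{Fractal Dimension Defs} (or from the standard references on packing dimensions of measures) rather than reprove it, and I expect no genuine obstacle to arise.
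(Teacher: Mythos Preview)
Your proposal is correct and matches the paper's own argument exactly: the paper simply records that Corollary~\ref{CD+} follows from Theorem~\ref{D+} together with the identity ${\rm dim}^+_P(\mu)=\mu\text{-}\mathrm{ess\,sup}\,\gamma^+_\mu(x)$ (Theorem~\ref{HDIM AltDef}). Your write-up is in fact more detailed than what the paper provides.
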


Bounds on  the multifractal dimensions, defined in Section 2.1, can also be obtained as corollaries.

\begin{cor}\label{newCD-}
{\sl
 
 	Let $m>\varsigma\geq 0$. Assume that there exists a set $A\subset \R$  with $\mu(A)>0$ such that  for  all $x\in A$, 
 \begin{equation*}
   \limsup_{\eps\to 0+} \eps^{- \varsigma}J_{\mu,m}(x,\eps) >0.
 \end{equation*}
 Then for any $q>1$
$$
D_\mu^-(q)  \le   \varsigma.
$$
}\end{cor}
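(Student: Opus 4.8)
\emph{Proof strategy.} The plan is to promote the pointwise $\limsup$ hypothesis on the $m$-Borel transforms to a lower bound on $\mu([x-\eps,x+\eps])$ that holds, for a positive-$\mu$-measure set of $x$, along a \emph{common} sequence of scales, and then to insert this into the partition function computing $D^-_\mu(q)$. First I would make the standard reductions: normalize so that $\mu(\R)=1$, and, writing $A=\bigcup_n\{x:\limsup_{\eps\to0+}\eps^{-\varsigma}J_{\mu,m}(x,\eps)>1/n\}$, fix $\delta>0$ and $A'\subseteq A$ with $\mu(A')>0$ on which $\limsup_{\eps\to0+}\eps^{-\varsigma}J_{\mu,m}(x,\eps)\ge\delta$. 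Since $t\mapsto t/(a+t)$ is increasing for $a>0$, the map $\eps\mapsto J_{\mu,m}(x,\eps)$ is nondecreasing, so restricting to dyadic scales costs only a fixed constant: for each $x\in A'$ one has $J_{\mu,m}(x,2^{-\ell})\ge\delta'2^{-\ell\varsigma}$ for infinitely many $\ell$, with $\delta'=2^{-1-\varsigma}\delta$. Let $A_\ell$ collect these $x$ at level $\ell$. Each $x\in A'$ lies in infinitely many $A_\ell$, so $\mu(\limsup_\ell A_\ell)\ge\mu(A')>0$; by Borel--Cantelli $\sum_\ell\mu(A_\ell)=\infty$, hence $\mu(A_\ell)\ge\ell^{-2}$ along a subsequence $\ell\to\infty$.

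The core step extracts a single good scale at each level. Splitting the integral defining $J_{\mu,m}$ dyadically about $\eps$ gives
\[
J_{\mu,m}(x,\eps)\le \mu([x-\eps,x+\eps])+\sum_{k\ge0}2^{-mk}\,\mu\big([x-2^{k+1}\eps,\,x+2^{k+1}\eps]\big),
\]
and since $m>\varsigma$ the part of the sum coming from scales $2^{k+1}\eps\ge1$ is $O(2^{-m\ell})=o(2^{-\varsigma\ell})$ at $\eps=2^{-\ell}$. Thus for $x\in A_\ell$ one of the remaining $O(\ell)$ terms is $\gtrsim 2^{-\varsigma\ell}/\ell$, i.e. there is a dyadic radius $r=r(x,\ell)\in[2^{-\ell},1)$ with $\mu([x-r,x+r])\gtrsim \delta'\,r^{m}\,2^{\ell(m-\varsigma)}/\ell$. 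Because $\mu(\R)=1$, this forces $r\lesssim \ell^{1/m}2^{-\ell(1-\varsigma/m)}$; in particular $r\to0$ and $\log(1/r)$ is comparable to $\ell$, lying between $(1-\varsigma/m)\ell\log2-O(\log\ell)$ and $\ell\log2$. A pigeonhole over the at most $O(\ell)$ possible values of $r(x,\ell)$ yields one value $s_\ell$ attained on a subset $B_\ell\subseteq A_\ell$ with $\mu(B_\ell)\ge\mu(A_\ell)/(\ell+1)$, so that $\mu([x-s_\ell,x+s_\ell])\gtrsim \delta' s_\ell^m 2^{\ell(m-\varsigma)}/\ell$ for \emph{every} $x\in B_\ell$, with $2^{-\ell}\le s_\ell\to0$ and $\log(1/s_\ell)\asymp\ell$. (En route this also shows the pointwise fact that $\limsup_{\eps\to0+}\eps^{-\varsigma}J_{\mu,m}(x,\eps)>0$ implies $\gamma^-_\mu(x)\le\varsigma$, the natural $\limsup$-counterpart of Theorem~\ref{D+}.)

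Finally, let $I_\mu(q,\eps)=\int\mu([x-\eps,x+\eps])^{q-1}\,d\mu(x)$ be the partition function computing $D^-_\mu(q)=\liminf_{\eps\to0+}\frac{\log I_\mu(q,\eps)}{(q-1)\log\eps}$ (any of its standard equivalent forms works, since for $q>1$ they coincide up to constants that do not affect the $\liminf$). Using $q-1>0$, so that $t\mapsto t^{q-1}$ is increasing,
\[
I_\mu(q,s_\ell)\ \ge\ \int_{B_\ell}\mu([x-s_\ell,x+s_\ell])^{q-1}\,d\mu(x)\ \gtrsim\ \mu(B_\ell)\,\big(\delta'\, s_\ell^{m}\,2^{\ell(m-\varsigma)}/\ell\big)^{q-1}.
\]
Taking logarithms, dividing by $(q-1)\log s_\ell<0$, and inserting $\mu(B_\ell)\ge\mu(A_\ell)/(\ell+1)\ge\ell^{-3}/2$ (along the chosen $\ell$) together with $\log(1/s_\ell)\asymp\ell$, all $\log\ell$ terms contribute $o(1)$ and the main term equals $m-\dfrac{(m-\varsigma)\,\ell\log2}{\log(1/s_\ell)}$, which is $\le m-(m-\varsigma)=\varsigma$ precisely because $s_\ell\ge 2^{-\ell}$ forces $\log(1/s_\ell)\le\ell\log2$. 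Hence $\dfrac{\log I_\mu(q,s_\ell)}{(q-1)\log s_\ell}\le\varsigma+o(1)$, and since $s_\ell\to0$ we conclude $D^-_\mu(q)\le\varsigma$.

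I expect the main obstacle to be exactly this passage to a common scale: the hypothesis supplies, for each $x$ separately, only a sequence of favorable scales depending on $x$, and the heart of the argument is the interplay of Borel--Cantelli (keeping $\mu(A_\ell)$ from decaying superpolynomially in $\ell$) with the dyadic pigeonhole, where finiteness of $\mu$ is used twice --- once to guarantee $s_\ell\to0$, and once to secure the sharp inequality $\log(1/s_\ell)\le\ell\log2$ that makes the final estimate land exactly on $\varsigma$ rather than on the weaker $m\varsigma/(m-\varsigma)$ appearing in Theorem~\ref{D+}.
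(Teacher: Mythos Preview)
Your argument is correct, but it is far more elaborate than what the paper does. In the paper, Corollary~\ref{newCD-} is obtained in one line by chaining three results already at hand: Theorem~\ref{D-} gives $\gamma_\mu^-(x)\le\varsigma$ for every $x\in A$; since $\mu(A)>0$, Theorem~\ref{HDIM AltDef} yields $\dim_H^-(\mu)=\mu\text{-}\essinf\,\gamma_\mu^-\le\varsigma$; and Theorem~\ref{DHP} (from \cite{BGTJMPA01}) gives $D_\mu^-(q)\le\dim_H^-(\mu)$ for $q>1$. You even note in passing that your construction recovers the pointwise bound $\gamma_\mu^-(x)\le\varsigma$ --- that \emph{is} Theorem~\ref{D-}, and once you have it the corollary is immediate via the cited dimensional inequalities.

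What your route does differently is to bypass the black box $D_\mu^-(q)\le\dim_H^-(\mu)$ entirely: the Borel--Cantelli step (to keep $\mu(A_\ell)$ from decaying too fast) together with the dyadic pigeonhole manufactures a \emph{common} scale $s_\ell$ on which a positive-mass set has large concentration, and you then plug this directly into the correlation integral $I_\mu(q,\eps)$. This is a legitimate and self-contained argument; its payoff is that it does not rely on Theorem~\ref{DHP}, and it makes transparent exactly where the bound $\varsigma$ (rather than $m\varsigma/(m-\varsigma)$) comes from --- namely from $s_\ell\ge2^{-\ell}$. The price is length: the paper's proof is three citations, while yours reproves a good part of the machinery those citations encapsulate.
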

\begin{cor}\label{newCD+}
		Let $m>\varsigma\geq 0$.
 Assume that there exists a set $A\subset \R$  with $\mu(A)>0$ such that  for  all $x\in A$, 
\begin{equation*}
   \liminf_{\eps\to 0+} \eps^{- \varsigma}J_{\mu,m}(x,\eps)>0. 
 \end{equation*}
Then for any $q>1$
$$
D_\mu^+(q)  \le   m \varsigma /(m-\varsigma).
$$
Under the further assumption that $\gamma_\mu^-(x) \ge \delta>0$ for   all $x\in A$, we have that 
for any $q>1$
$$
D_\mu^+(q)    \le \varsigma (m-\delta)/(m-\varsigma).
$$
\end{cor}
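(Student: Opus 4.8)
The plan is to derive Corollary~\ref{newCD+} from the pointwise estimate of Theorem~\ref{D+} by a routine uniformization over the set $A$. First, apply Theorem~\ref{D+} at each point $x\in A$: the hypothesis $\liminf_{\eps\to0+}\eps^{-\varsigma}J_{\mu,m}(x,\eps)>0$ is precisely the hypothesis of that theorem (and, since $J_{\mu,m}(x,\eps)\asymp\eps^{m}$ whenever $x\notin\operatorname{supp}\mu$, which decays faster than $\eps^{\varsigma}$, this hypothesis already forces $x\in\operatorname{supp}\mu$, so $\gamma_\mu^\pm(x)$ are well defined on $A$). Hence \eqref{me14} gives, for every $x\in A$,
\begin{equation*}
\gamma_\mu^+(x)\ \le\ \frac{\varsigma\bigl(m-\gamma_\mu^-(x)\bigr)}{m-\varsigma}\ \le\ \frac{m\varsigma}{m-\varsigma}=:\Gamma ,
\end{equation*}
and, when additionally $\gamma_\mu^-(x)\ge\delta$ on $A$, the sharper pointwise bound $\gamma_\mu^+(x)\le\varsigma(m-\delta)/(m-\varsigma)=:\Gamma$. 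So in either case everything reduces to the following elementary claim: \emph{for every $q>1$, if $\gamma_\mu^+(x)\le\Gamma$ for all $x$ in a Borel set $A$ with $\mu(A)>0$, then $D_\mu^+(q)\le\Gamma$} (one may assume $A$ Borel, replacing it if necessary by the Borel set of all $x$ at which the hypothesis holds).

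To prove the claim, fix $\eta>0$. By \eqref{gamma+}, for each $x\in A$ there is $\eps_0(x)>0$ such that $\mu([x-\eps,x+\eps])\ge\eps^{\Gamma+\eta}$ for all $0<\eps<\eps_0(x)$. Set
\begin{equation*}
A_n:=\bigl\{\,x\in A:\ \mu([x-\eps,x+\eps])\ge\eps^{\Gamma+\eta}\ \ \text{for all}\ \ 0<\eps<\tfrac1n\,\bigr\} .
\end{equation*}
Since $\mu$ is continuous, $(x,\eps)\mapsto\mu([x-\eps,x+\eps])$ is continuous, so the defining condition may be tested on rational $\eps$ only and each $A_n$ is Borel; the sets $A_n$ increase and $\bigcup_n A_n=A$, whence $\mu(A_n)\uparrow\mu(A)>0$, and we fix $n$ with $c_0:=\mu(A_n)>0$.

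Now, for $0<\eps<1/n$, using $q-1>0$ and the monotonicity of $t\mapsto t^{q-1}$,
\begin{align*}
\int_{\R}\mu([x-\eps,x+\eps])^{q-1}\,d\mu(x)
&\ \ge\ \int_{A_n}\mu([x-\eps,x+\eps])^{q-1}\,d\mu(x)\\
&\ \ge\ \int_{A_n}\eps^{(q-1)(\Gamma+\eta)}\,d\mu(x)\ =\ c_0\,\eps^{(q-1)(\Gamma+\eta)} .
\end{align*}
Taking logarithms, dividing by $(q-1)\log\eps<0$ (which reverses the inequality) and letting $\eps\to0+$ annihilates the term $\log c_0/((q-1)\log\eps)$, so the definition of $D_\mu^+(q)$ in Section~\ref{Fractal Dimension Defs} yields $D_\mu^+(q)\le\Gamma+\eta$; as $\eta>0$ is arbitrary, $D_\mu^+(q)\le\Gamma$. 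With the two choices of $\Gamma$ from the first paragraph this is exactly the two asserted bounds. (If $D_\mu^+(q)$ is instead defined through a partition function $\sum_k\mu(I_k)^q$ over the intervals $I_k$ of a length-$\eps$ grid, the standard two-sided comparison $\sum_k\mu(I_k)^q\le\int_{\R}\mu([x-\eps,x+\eps])^{q-1}\,d\mu(x)\le C_q\sum_k\mu(I_k)^q$, valid for $q>1$, reduces that case to the one treated here.)

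The only step that is not a formality is the uniformization in the second paragraph — trading the pointwise thresholds $\eps_0(x)$ for a single threshold on a set of positive $\mu$-measure. It is also where $q>1$ is essential: the uniform \emph{lower} bound $\mu([x-\eps,x+\eps])\ge\eps^{\Gamma+\eta}$ on $A_n$ turns into a lower bound on the $q$-th order integral only because $t\mapsto t^{q-1}$ is increasing, which fails for $q\le1$; this is why the statement is restricted to $q>1$.
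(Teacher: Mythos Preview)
Your proof is correct. The paper, however, takes a shorter route: after invoking Theorem~\ref{D+} pointwise (as you do), it simply cites Theorem~\ref{HDIM AltDef} ($\dim_P^-(\mu)=\mu\text{-}\essinf\,\gamma_\mu^+$) together with Theorem~\ref{DHP} ($D_\mu^+(q)\le\dim_P^-(\mu)$ for $q>1$) from \cite{gua,BGTJMPA01}, so that $\gamma_\mu^+\le\Gamma$ on a set of positive $\mu$-measure immediately gives $D_\mu^+(q)\le\dim_P^-(\mu)\le\Gamma$. Your argument is a self-contained substitute for this citation chain: the uniformization $A\to A_n$ and the lower bound on the correlation integral $\int\mu([x-\eps,x+\eps])^{q-1}\,d\mu(x)$, followed by the standard comparison with $S_\mu(q,\eps)$, is essentially an inline proof of the relevant fragment of Theorem~\ref{DHP}. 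The tradeoff is that the paper's approach is a one-liner once the literature is in hand, while yours avoids the detour through packing dimension and makes transparent exactly where $q>1$ enters.
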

One can also prove a better upper bound for $D_\mu^+(q)$ for  large $q$.
\begin{thm}\label{upD+}
  Let $m>1>\varsigma> 0$.
Assume that there exists a set $A\subset \R$  with $\mu(A)>0$ such that  for  all $x\in A$, 
\begin{equation*}
\liminf_{\eps\to 0+} \eps^{- \varsigma}J_{\mu,m}(x,\eps)>0. 
\end{equation*} 
Then we have that  for all $q$ with  $ q \ge 1+{1 \over m}$, 
$$
D_\mu^+(q) \le \varsigma.
$$
 
\end{thm}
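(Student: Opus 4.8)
The plan is to deduce the bound from a scale-uniform lower estimate on the ``partition function'' $P_\mu(q,\eps):=\int_\R\mu([x-\eps,x+\eps])^{q-1}\,d\mu(x)$. Since, for $q>1$, this quantity governs $D_\mu^+(q)$ — one has $D_\mu^+(q)=\limsup_{\eps\to0^+}\frac{\log P_\mu(q,\eps)}{(q-1)\log\eps}$ up to the usual comparability of $P_\mu(q,\cdot)$ with grid partition sums, cf.\ Section~\ref{Fractal Dimension Defs} — it suffices to show that $P_\mu(q,\eps)\ge c\,\eps^{(q-1)\varsigma}$ for all sufficiently small $\eps$.

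First I would perform the routine exhaustion. Writing $A=\bigcup_n A_n$ with $A_n=\{x\in A:\ \eps^{-\varsigma}J_{\mu,m}(x,\eps)\ge 1/n\ \text{for all }0<\eps<1/n\}$, some $A_n$ has positive $\mu$-measure, so after replacing $A$ by this $A_n$ we may assume there are $c_0>0$ and $\eps_0>0$ with $J_{\mu,m}(x,\eps)\ge c_0\eps^{\varsigma}$ for every $x\in A$ and every $0<\eps<\eps_0$. Writing $p:=q-1>0$, this gives immediately
\[
\int_\R J_{\mu,m}(x,\eps)^p\,d\mu(x)\ \ge\ \int_A J_{\mu,m}(x,\eps)^p\,d\mu(x)\ \ge\ c_0^{p}\,\mu(A)\,\eps^{p\varsigma}\qquad(0<\eps<\eps_0).
\]

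The heart of the argument is the reverse bound $\int_\R J_{\mu,m}(x,\eps)^p\,d\mu(x)\le C\,P_\mu(q,\eps)$. Splitting the region of integration in $J_{\mu,m}(x,\eps)$ into $\{|x-y|\le\eps\}$ and the dyadic shells $\{2^{k-1}\eps<|x-y|\le 2^k\eps\}$, $k\ge1$, and bounding $|x-y|^m+\eps^m$ from below on each, one obtains the pointwise estimate
\[
J_{\mu,m}(x,\eps)\ \le\ 2^m\sum_{k\ge0}2^{-km}\,\mu\big([x-2^k\eps,\,x+2^k\eps]\big).
\]
Raising this to the power $p$ (by subadditivity of $t\mapsto t^p$ when $p\le1$, i.e.\ $q\le2$; and by a weighted Jensen inequality, which costs only a factor $2^{k\theta}$ with $\theta>0$ as small as we like, when $p>1$) and integrating against $d\mu$ reduces everything to estimating $\sum_{k\ge0}2^{-kmp+k\theta}P_\mu(q,2^k\eps)$. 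Here I would invoke the doubling estimate $P_\mu(q,2^k\eps)\le C_q\,2^{k(q-1)}P_\mu(q,\eps)$ — which comes from the clean doubling $S(q,2\delta)\le 2^{q-1}S(q,\delta)$ of grid partition sums combined with the comparability $P_\mu(q,\cdot)\asymp S(q,\cdot)$, all constants being uniform in $k$ and $\eps$. The resulting geometric series has ratio $2^{p(1-m)+\theta}$, which is $<1$ precisely because $m>1$; hence it converges and $\int_\R J_{\mu,m}(x,\eps)^p\,d\mu(x)\le C\,P_\mu(q,\eps)$. Comparing with the lower bound of the previous paragraph yields $P_\mu(q,\eps)\ge (c_0^{p}\mu(A)/C)\,\eps^{(q-1)\varsigma}$ for $\eps<\eps_0$, hence $D_\mu^+(q)\le\varsigma$. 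This is exactly the improvement over Corollary~\ref{newCD+}: the factor $m/(m-\varsigma)$ there is replaced here by $1$.

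The main obstacle I anticipate is the bookkeeping needed to make the dyadic tail close in the asserted range of $q$: one has to establish the doubling of $P_\mu(q,\cdot)$ with a constant that does not deteriorate with the number of dyadic steps — so that the tail genuinely converges rather than accumulating a factor $C_q^{k}$ — and, when $p>1$, to choose the Jensen weight $\theta$ so that $\theta(p-1)<p(m-1)$. Balancing these exponents against the growth $2^{k(q-1)}$ produced by doubling is presumably what pins down the precise threshold $q\ge 1+1/m$ (equivalently $m(q-1)\ge1$) in the statement. Everything past the integral inequality $\int_\R J_{\mu,m}^p\,d\mu\le C\,P_\mu(q,\eps)$ is then routine.
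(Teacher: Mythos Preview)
Your outline is correct and can be completed as written; in fact it yields a little more than the paper proves. Both your argument and the paper's share the same skeleton: with $p=q-1$ one bounds $I(\eps):=\int J_{\mu,m}(x,\eps)^p\,d\mu(x)$ from below by $c\,\eps^{p\varsigma}$ (the paper does this via Fatou rather than exhaustion, but either works) and from above by $C$ times the R\'enyi sum $S_\mu(q,\eps)$ (equivalently $P_\mu(q,\eps)$). The difference is in the upper bound. The paper partitions at the single scale $\eps$ into $I_j=[j\eps,(j+1)\eps)$, applies the subadditivity $(\sum t_k)^p\le\sum t_k^p$ for $p\le 1$, and then uses H\"older with exponents $p+1$, $(p+1)/p$ to obtain $I(\eps)\le S_\mu(p+1,\eps)\sum_i(s^m(i)+1)^{-p}$; convergence of that last sum is what forces $mp>1$, i.e.\ $q>1+1/m$. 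Your dyadic decomposition instead gives the geometric tail $\sum_k 2^{kp(1-m)}$ after the doubling $S_\mu(q,2^k\eps)\le 2^{k(q-1)}S_\mu(q,\eps)$ (which indeed holds with no $k$-dependent constant, since one passes through $S_\mu$ once on each side), and this converges under the sole hypothesis $m>1$.

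So your anticipated obstacle is not one: the doubling constant does not deteriorate, and the ``balancing'' you mention never produces the threshold $q\ge 1+1/m$. In your scheme the $p\le 1$ case already covers all $1<q\le 2$ (no $\theta$ is needed there), and monotonicity of $D_\mu^+$ extends it to $q>2$ without ever invoking Jensen. The restriction $q\ge 1+1/m$ in the paper's statement is thus an artifact of the $\eps$-grid decomposition, not intrinsic; your dyadic route removes it and gives $D_\mu^+(q)\le\varsigma$ for every $q>1$.
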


Finally, $m$-Borel transform leads also to a new bound for Hausdorff dimension.

Let   ${\rm dim}^+_H(\mu)$    be the upper Hausdorff   dimension of the measure $\mu$, see Section \ref{Fractal Dimension Defs}.

\begin{thm}\label{D-}
 
		Let $m>\varsigma\geq 0$.
Assume that
 \begin{equation}\label{newequa4}
   \limsup_{\eps\to 0+}\eps^{- \varsigma}J_{\mu,m}(x,\eps) >0.
 \end{equation}
  Then
  \begin{equation*}
    \gamma_{\mu}^-(x)\leq \varsigma.
  \end{equation*}
  
\end{thm}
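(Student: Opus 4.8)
The plan is to argue by contradiction: assume $\gamma_\mu^-(x)>\varsigma$ and deduce that $\eps^{-\varsigma}J_{\mu,m}(x,\eps)\to 0$ as $\eps\to 0+$, which contradicts \eqref{newequa4}. Since $m>\varsigma$, I can fix $\gamma$ with $\varsigma<\gamma<\min\{\gamma_\mu^-(x),m\}$; by the definition \eqref{gamma-} of $\gamma_\mu^-(x)$ as a $\liminf$, and because $\log\eps<0$, there is $\eps_0\in(0,1)$ with $\mu([x-\eps,x+\eps])\le\eps^{\gamma}$ for all $0<\eps<\eps_0$.

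The heart of the matter is an upper bound for $J_{\mu,m}(x,\eps)$ in terms of the concentration function of $\mu$ — the estimate dual to \eqref{borel} — which I would obtain by a dyadic decomposition. Writing $B_r:=[x-r,x+r]$ and decomposing $\R=B_\eps\cup\bigcup_{k\ge 0}\bigl(B_{2^{k+1}\eps}\setminus B_{2^{k}\eps}\bigr)$, I would use that the integrand $\eps^m/(|x-y|^m+\eps^m)$ is at most $1$ on $B_\eps$ and at most $2^{-km}$ on the $k$-th shell, so that
$$
J_{\mu,m}(x,\eps)\le\mu(B_\eps)+\sum_{k\ge 0}2^{-km}\,\mu\bigl(B_{2^{k+1}\eps}\bigr).
$$
For the indices $k$ with $2^{k+1}\eps<\eps_0$ I would plug in $\mu(B_{2^{k+1}\eps})\le(2^{k+1}\eps)^{\gamma}$; since $\gamma<m$ the series $\eps^{\gamma}\sum_k 2^{k(\gamma-m)}$ converges, so this part is $O(\eps^{\gamma})$. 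For the remaining indices, where $2^{k+1}\eps\ge\eps_0$, I would just bound $\mu(B_{2^{k+1}\eps})\le\mu(\R)<\infty$; as these are the indices $k\ge\log_2(\eps_0/\eps)-1$, the corresponding tail of $\sum_k 2^{-km}$ is $O\bigl((\eps/\eps_0)^{m}\bigr)=O(\eps^{m})$. Putting the two contributions together and using $\gamma<m$, $\eps<1$, I would get $J_{\mu,m}(x,\eps)\le C\eps^{\gamma}$ for all sufficiently small $\eps$, with $C$ depending only on $\gamma,m,\eps_0$ and $\mu(\R)$.

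Then $\eps^{-\varsigma}J_{\mu,m}(x,\eps)\le C\eps^{\gamma-\varsigma}\to 0$ since $\gamma>\varsigma$, contradicting \eqref{newequa4}, and hence $\gamma_\mu^-(x)\le\varsigma$. This is essentially the $\liminf$-side analogue, run in reverse, of part of the reasoning behind Theorem \ref{D+}. The step I expect to require the most care — and the main (rather minor) obstacle — is the bookkeeping at the crossover scale $2^{k}\eps\sim\eps_0$: I need to confirm that the ``large-scale'' shells, on which no power-law control of the concentration function is available, contribute only $O(\eps^{m})$, which is then absorbed into $O(\eps^{\gamma})$ precisely because $\gamma<m$. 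This is the only place where the hypothesis $m>\varsigma$ (through the choice $\varsigma<\gamma<m$) is genuinely used.
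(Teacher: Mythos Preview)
Your proof is correct and follows the same contradiction strategy as the paper's, but with two streamlining differences worth noting. First, you use a dyadic decomposition of $\R$ into shells $B_{2^{k+1}\eps}\setminus B_{2^k\eps}$, whereas the paper uses the arithmetic decomposition $A_k=\{y:k\eps\le|x-y|<(k+1)\eps\}$ together with an Abel summation to reach the analogous bound $J_{\mu,m}(x,\eps)\le C\sum_{k\ge 1} a_k/k^{m+1}$; the dyadic version is cleaner and avoids the summation-by-parts step. Second, and more substantively, the paper proceeds in two stages: it first establishes $\gamma_\mu^-(x)\le m$ as a separate lemma, so that in the second stage any $\delta\in(\varsigma,\gamma_\mu^-(x))$ automatically satisfies $\delta<m$ and the series $\sum_k k^{\delta-m-1}$ converges. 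You bypass this preliminary step entirely by choosing $\gamma\in(\varsigma,\min\{\gamma_\mu^-(x),m\})$ from the outset, which is possible precisely because $m>\varsigma$; this collapses the paper's two stages into one. Both decompositions and both organizational choices lead to the same conclusion, but yours is the more economical route.
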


\begin{cor}\label{CD-}
 	Let $m>\varsigma\geq 0$.
	Assume that
		\begin{equation*}
		\limsup_{\eps\to 0+} \eps^{- \varsigma}J_{\mu,m}(x,\eps)>0
		\end{equation*}
		for almost every $x$ with respect to measure $\mu$.
		Then
		$$
		{\rm dim}^+_H(\mu) \le \varsigma.
		$$
\end{cor}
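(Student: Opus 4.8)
The plan is to proceed by contradiction, exactly paralleling the strategy one expects behind Theorem~\ref{D+}, but now using the $\limsup$ hypothesis on $\eps^{-\varsigma}J_{\mu,m}(x,\eps)$ together with the $\liminf$ in the definition \eqref{gamma-} of $\gamma_\mu^-(x)$. Suppose for contradiction that $\gamma_\mu^-(x) > \varsigma$; then there exist $\gamma$ with $\varsigma < \gamma \le \gamma_\mu^-(x)$ and $\eps_0>0$ such that $\mu([x-\delta,x+\delta]) \le \delta^{\gamma}$ for all $0<\delta\le\eps_0$. The goal is to show this forces $\eps^{-\varsigma}J_{\mu,m}(x,\eps)\to 0$, contradicting \eqref{newequa4}.

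The key step is to estimate $J_{\mu,m}(x,\eps)$ from above under the concentration bound. First I would split the integral in \eqref{mera04} into the region $|x-y|\le \eps_0$ and its complement; on the complement the integrand is bounded by $\eps^m/(\eps^m+\eps_0^m)\le (\eps/\eps_0)^m \cdot \mu(\R)$, which is $o(\eps^\varsigma)$ since $m>\varsigma$. For the inner region, the standard device is the layer-cake / dyadic decomposition: write $\{y: |x-y|\le\eps_0\}$ as the union of annuli $A_k=\{y: 2^{-k-1}\eps_0 < |x-y| \le 2^{-k}\eps_0\}$ for $k\ge 0$, plus the ball $|x-y|\le \eps$ handled separately (where the integrand is $\le 1$ and the mass is $\le (2\eps)^\gamma$, giving a contribution $\lesssim \eps^\gamma = o(\eps^\varsigma)$). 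On $A_k$ one has $|x-y|^m \ge (2^{-k-1}\eps_0)^m$, so the integrand is at most $\eps^m/(2^{-k-1}\eps_0)^m = c\, 2^{km}\eps^m\eps_0^{-m}$, while $\mu(A_k)\le (2^{-k}\eps_0)^\gamma$. Summing over those $k$ with $2^{-k}\eps_0 \ge \eps$ (i.e.\ $2^k \le \eps_0/\eps$) gives a geometric-type series $\sum_k 2^{k(m-\gamma)} \eps^m \eps_0^{\gamma-m}$, dominated by its last term since $m>\gamma$, hence $\lesssim (\eps_0/\eps)^{m-\gamma}\eps^m\eps_0^{\gamma-m} = \eps^{\gamma}$. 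Collecting the pieces yields $J_{\mu,m}(x,\eps) \le C\,\eps^{\gamma} + C(\eps/\eps_0)^m$, so $\eps^{-\varsigma}J_{\mu,m}(x,\eps) \le C\eps^{\gamma-\varsigma} + C\eps^{m-\varsigma}\eps_0^{-m} \to 0$ as $\eps\to0+$, since $\gamma>\varsigma$ and $m>\varsigma$. This contradicts \eqref{newequa4}, proving $\gamma_\mu^-(x)\le\varsigma$.

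Then Corollary~\ref{CD-} follows at once: if the $\limsup$ hypothesis holds for $\mu$-a.e.\ $x$, then $\gamma_\mu^-(x)\le\varsigma$ for $\mu$-a.e.\ $x$, and by the standard characterization of upper Hausdorff dimension of a measure via the lower local dimension (i.e.\ ${\rm dim}^+_H(\mu) = \mu\text{-}\mathrm{esssup}\,\gamma_\mu^-(x)$, as recalled in Section~\ref{Fractal Dimension Defs}) we get ${\rm dim}^+_H(\mu)\le\varsigma$.

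The main obstacle I anticipate is bookkeeping at the boundary of the dyadic decomposition rather than anything conceptual: one must be careful that the innermost scale is $\eps$ (not $0$), correctly handle the transition annulus where $2^{-k}\eps_0 \approx \eps$, and verify that the constants do not depend on $\eps$. A secondary subtlety is that the hypothesis only gives a positive $\limsup$, so the contradiction must be extracted along the very subsequence $\eps_n\to0$ realizing that $\limsup$ — but since the upper bound $\eps^{-\varsigma}J_{\mu,m}(x,\eps)\to 0$ is established for \emph{all} $\eps\to0+$, this causes no difficulty. One should also double-check the edge case $\varsigma=0$, where the claim becomes $\gamma_\mu^-(x)=0$ whenever $\limsup_{\eps\to0}J_{\mu,m}(x,\eps)>0$; the same computation applies verbatim.
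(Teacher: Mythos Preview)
Your argument is correct, and the deduction of Corollary~\ref{CD-} from Theorem~\ref{D-} via the identity ${\rm dim}^+_H(\mu)=\mu\text{-}\operatorname{esssup}\gamma_\mu^-(x)$ is exactly what the paper does. There is one small slip: you choose $\gamma$ with $\varsigma<\gamma\le\gamma_\mu^-(x)$ and later invoke ``since $m>\gamma$'' to control the geometric series, but nothing in your stated choice forces $\gamma<m$ when $\gamma_\mu^-(x)\ge m$. The fix is immediate---take $\gamma\in(\varsigma,\min(m,\gamma_\mu^-(x)))$, which is nonempty because $m>\varsigma$---so this is a wording issue rather than a real gap.

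Your route to Theorem~\ref{D-} differs from the paper's in two ways. First, the paper decomposes into \emph{linear} annuli $A_k=\{k\eps\le|x-y|<(k+1)\eps\}$ and uses Abel summation to reach the key estimate $J_{\mu,m}(x,\eps)\le C\sum_{k\ge1}a_k/k^{m+1}$ with $a_k=\mu((x-k\eps,x+k\eps))$; you use a dyadic decomposition and sum a geometric series directly. Second, the paper proceeds in two explicit steps---first proving $\gamma_\mu^-(x)\le m$, then $\gamma_\mu^-(x)\le\varsigma$---precisely so that the tail sum $\sum k^{\delta-m-1}$ converges in the second step; your one-step argument sidesteps this by building $\gamma<m$ into the initial choice. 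Your version is a bit more economical for this theorem; the paper's linear-annulus bound \eqref{newequa6} has the compensating advantage of being reused verbatim (as \eqref{newequa13}) in the proof of Theorem~\ref{D+}, where a tail estimate over $k\ge N$ is needed.
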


\section{Preliminaries}
\subsection{Fractal dimensions of measures and proof of corollaries}\label{Fractal Dimension Defs}

We first define the Hausdorff dimension ${\rm dim}_H(S)$. Fix \(S \subset \R\) and \(\gamma \in [0,1].\)
We define
 \[H_{\gamma,\delta}(S) = \inf \left\{\sum_{j = 1}^\infty |B_j|^\gamma : |B_j| < \delta; E \subset \bigcup_{j=1}^\infty B_j \right\},\]
 where the \(\inf\) is over all \(\delta\)-covers by intervals \(B_j\) of diameter at most \(\delta.\)

Set
 \[h^\gamma(S) = \lim_{\delta\to 0^+} H_{\gamma,\delta}(S).\]
 
$h^\gamma(S)$ is a Borel measure called \(\gamma\)\textit{-dimensional Hausdorff measure}.
There exists a unique $\gamma_0$ such that
 \[\gamma_0 = \inf\{\gamma: h^\gamma(S) = 0\}\] and \[\gamma_0 = \sup\{\gamma: h^\gamma(S) = \infty\}.\]
 We say that \(\gamma_0\) is the \textit{Hausdorff dimension} of \(S.\)

We now define the packing dimension of a Borel set \( S \), \(\text{dim}_{P}(S)\). A \(\delta\)-packing of an arbitrary set \( D \subset \mathbb{R} \) is defined as a countable collection \(\{B(E_k, r_k)\}_{k \in \mathbb{N}}\) of disjoint closed intervals centered at \( E_k \in D \) with \( r_k < \frac{\delta}{2} \).

The quantity \( P^{\gamma}_{\delta}(D) \) is defined by:
\[
P^{\gamma}_{\delta}(D) = \sup \left\{ \sum_{k \in \mathbb{N}} (2r_k)^{\gamma} \mid \{B(E_k, r_k)\}_{k \in \mathbb{N}} \text{ is a } \delta\text{-packing of } D \right\}.
\]

The \(\gamma\)-packing measure of a Borel set is defined in two steps:
\[
\tilde{P}^{\gamma}(D) = \lim_{\delta \to 0} P^{\gamma}_{\delta}(D),
\]
\[
P^{\gamma}(D) = \inf \left\{ \sum_{n \in \mathbb{N}} \tilde{P}^{\gamma}(D_n) \mid D_n \text{ is Borel, } \cup_n D_n = D \right\}.
\]

For any Borel set \( S \subset \mathbb{R} \), there exists a unique \(\text{dim}_{P}(S) \in [0, 1]\) such that \( P^{\gamma}(S) = 0 \) for any \( \gamma > \text{dim}_{P}(S) \) and \( P^{\gamma}(S) = \infty \) for any \( \gamma < \text{dim}_{P}(S) \). This unique value \(\text{dim}_{P}(S)\) is called the packing dimension of the set \( S \).

Let $\mu$ be a Borel measure on $\R$.
The upper/lower Hausdorff    and upper/lower packing dimensions of the measure $\mu$ is defined as 
\begin{align}
\dim^+_H(\mu) &= \inf\{\dim_H(S): \mu(S) =\mu(\R)\} ,\\
\dim^+_P(\mu) &= \inf\{\dim_P(S): \mu(S) =\mu (\R)\},\\
\dim^-_H(\mu) &= \inf\{\dim_H(S): \mu(S) >0\} ,\\
\dim^-_P(\mu) &= \inf\{\dim_P(S): \mu(S) >0\}.
\end{align}

Let us now define the
generalized R\'{e}nyi dimensions
$D_\mu^\pm (q)$ of $\mu$. For any $\eps >0, q>0$ consider the sum
\begin{equation}\label{renyi}
S_\mu(q, \eps)=\sum_{j \in {\Z}} ( \mu([j \eps, (j+1) \eps))^q.
\end{equation}

The lower and upper multifractal dimensions can be defined as follows (this is one of many equivalent definitions):
\begin{equation*}
D_\mu^-(q)=\liminf_{\eps \to 0+}{ {\rm log} S_\mu(q, \eps) \over (q-1) {\rm log} \eps},
\end{equation*}
 and
 \begin{equation*}
D_\mu^+(q)=\limsup_{\eps \to 0+}{ {\rm log} S_\mu(q, \eps) \over (q-1) {\rm log} \eps}.
\end{equation*}


\begin{thm}\cite{gua} \label{HDIM AltDef}
\begin{align}
\dim_H^+(\mu) &= \mu-ess \sup \gamma_\mu^-(x) \\
\dim_P^+(\mu) &= \mu -ess \sup \gamma_\mu^+(x)\\
\dim_H^-(\mu) &= \mu-ess \inf \gamma_\mu^-(x) \\
\dim_P^-(\mu) &= \mu -ess \inf \gamma_\mu^+(x).
\end{align}
\end{thm}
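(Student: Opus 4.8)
The plan is to deduce all four identities from the classical Billingsley--Frostman dichotomy comparing the local scaling exponents $\gamma_\mu^\pm$ with the Hausdorff and packing dimensions of a set, and then to run a short essential supremum/infimum argument. I would first record four implications (standard; see \cite{gua} or Falconer's \emph{Techniques in Fractal Geometry}), valid for any finite Borel measure $\mu$ and any Borel set $S\subset\R$ with $\mu(S)>0$: if $\gamma_\mu^-(x)\le s$ for all $x\in S$ then $\dim_H S\le s$; if $\gamma_\mu^-(x)\ge s$ for all $x\in S$ then $\dim_H S\ge s$; and the two analogues obtained by replacing $\gamma_\mu^-$ with $\gamma_\mu^+$ and $\dim_H$ with $\dim_P$. (Here one uses that $[x-\eps,x+\eps]$ is exactly the $\eps$-ball about $x$ in $\R$, so the exponents \eqref{gamma-} and \eqref{gamma+} are the usual local dimensions of $\mu$.)

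For the two upper-bound implications I would run the standard covering argument: given $\delta>0$, each $x\in S$ has arbitrarily small $\eps$ with $\mu([x-\eps,x+\eps])\ge\eps^{s+\delta}$; these intervals form a Vitali cover of $S$; the $5r$-covering lemma extracts a disjoint subfamily $\{B_j\}$ whose fivefold dilates still cover $S$; and, the $B_j$ being disjoint, $\sum_j|5B_j|^{s+\delta}$ is at most a constant times $\sum_j\mu(B_j)\le\mu(\R)$, so $h^{s+\delta}(S)<\infty$ and $\dim_H S\le s+\delta$; letting $\delta\to0$ gives $\dim_H S\le s$. The packing version needs the same estimate carried through the packing pre-measure and then the passage to $P^{s+\delta}$. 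For the two lower-bound implications I would invoke the mass distribution principle after an Egorov-type reduction to a subset $S'\subseteq S$ with $\mu(S')>0$ on which $\mu([x-\eps,x+\eps])\le\eps^{s-\delta}$ holds uniformly for $\eps<r_0$: then any sufficiently fine cover, respectively packing, of $S'$ has its $(s-\delta)$-sum bounded below by $c\,\mu(S')>0$, so $h^{s-\delta}(S')>0$, respectively $\tilde P^{s-\delta}$ is positive on $S'$; this forces $\dim_H S\ge s-\delta$, respectively $\dim_P S\ge s-\delta$, and $\delta\to0$ finishes.

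Granting these four lemmas, each identity of Theorem \ref{HDIM AltDef} follows softly; I illustrate with $\dim_P^+(\mu)=\mu\text{-}\operatorname{ess\,sup}\,\gamma_\mu^+(x)$. Set $s:=\mu\text{-}\operatorname{ess\,sup}\,\gamma_\mu^+(x)$. Since $\{x:\gamma_\mu^+(x)\le s\}$ has full $\mu$-measure, the packing upper-bound implication gives $\dim_P^+(\mu)\le s$. Conversely, any $S$ with $\mu(S)=\mu(\R)$ meets $\{\gamma_\mu^+>s-\eps\}$ in a set of positive $\mu$-measure for every $\eps>0$, and the packing lower-bound implication applied there yields $\dim_P S\ge s-\eps$; hence $\dim_P^+(\mu)\ge s$. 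The identity for $\dim_H^+(\mu)$ is identical with $(\gamma_\mu^+,\dim_P)$ replaced by $(\gamma_\mu^-,\dim_H)$, and the identities for $\dim_H^-(\mu)$ and $\dim_P^-(\mu)$ are identical with $\operatorname{ess\,sup}$ replaced by $\operatorname{ess\,inf}$ and the condition $\mu(S)=\mu(\R)$ relaxed to $\mu(S)>0$.

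The step I expect to be the main obstacle is the packing half of the dichotomy, i.e.\ the implications relating $\gamma_\mu^+$ to $\dim_P$. Packing measure is built in two stages --- a packing pre-measure $\tilde P$, then an infimum over countable Borel decompositions --- so the covering and packing estimates above must be propagated through that regularization; in particular the bound $\dim_P S\le s$ needs a packing-density argument in place of a bare packing count, and the lower bound needs the fact that $\tilde P^{s-\delta}$ remains positive on every subset of positive $\mu$-measure. The Hausdorff half of the dichotomy and all of the essential-supremum/infimum bookkeeping are routine.
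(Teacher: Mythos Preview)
The paper does not prove Theorem~\ref{HDIM AltDef}; it is quoted as a known result from \cite{gua} and used as a black box (see the proofs of Corollaries~\ref{CD-}, \ref{CD+}, \ref{newCD-}, \ref{newCD+}, which simply invoke it). So there is nothing to compare your proposal against.

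That said, your outline is the standard route to these identities and is essentially correct. A couple of small comments on the packing half, which you rightly flag as the delicate part. For the upper bound $\dim_P^+(\mu)\le s$, the argument you sketch gives only $\tilde P^{s+\delta}(S_n)<\infty$ on the strata $S_n=\{x:\mu([x-\eps,x+\eps])\ge\eps^{s+\delta}\text{ for all }\eps\le 1/n\}$; to pass from this to $\dim_P S\le s$ one typically argues that finiteness of $\tilde P^{s+\delta}(S_n)$ forces the upper box dimension of $S_n$ to be at most $s+\delta$, hence $\dim_P S_n\le s+\delta$, and then uses countable stability of packing dimension. Your Vitali/$5r$ covering argument is really aimed at the Hausdorff implication; for packing you want to bound packings directly, not covers. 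For the lower bound $\dim_P S\ge s$ when $\gamma_\mu^+\ge s$ on $S$ with $\mu(S)>0$, the Egorov reduction plus mass distribution principle works, but one must check that positivity of $\tilde P^{s-\delta}$ on a set of positive $\mu$-measure survives the infimum over countable decompositions defining $P^{s-\delta}$; this is where the hypothesis $\mu(S')>0$ on every piece of the decomposition is used. None of this is a gap in your plan, just places where the write-up would need care.
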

\begin{thm}\cite[Prop.4.1]{BGTJMPA01} \label{DHP}
For any $q>1$,
\begin{align}
D_\mu^-(q) &\leq \dim_H^-(\mu) \\
D_\mu^+(q) &\leq \dim_P^-(\mu).
\end{align}
\end{thm}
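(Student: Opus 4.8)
The plan is to route everything through the integral
\[
\Phi_\eps:=\int_{\R}\mu([x-\eps,x+\eps])^{q-1}\,d\mu(x),
\]
which for $q>1$ is comparable, up to $q$-dependent constants, to the partition sum $S_\mu(q,\eps)$. Writing $S_\mu(q,\eps)=\sum_j\mu(I_j^\eps)^{q-1}\mu(I_j^\eps)=\int_{\R}\mu(I^\eps_{j(x)})^{q-1}\,d\mu(x)$, where $I^\eps_{j(x)}=[j(x)\eps,(j(x)+1)\eps)$ is the cell of the $\eps$-grid containing $x$, and using $I^\eps_{j(x)}\subset[x-\eps,x+\eps]$ together with $q-1>0$, one gets $S_\mu(q,\eps)\le\Phi_\eps$. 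Conversely $[x-\eps,x+\eps]$ meets at most three consecutive cells, so $\mu([x-\eps,x+\eps])^{q-1}\le 3^{q-1}\sum_{|l|\le1}\mu(I^\eps_{j(x)+l})^{q-1}$; integrating, reindexing, and applying Young's inequality $a^{q-1}b\le\frac{q-1}{q}a^q+\frac1q b^q$ to the two off-diagonal sums bounds each of them by $S_\mu(q,\eps)$, so $\Phi_\eps\le 3^{q}S_\mu(q,\eps)$. Hence $\log S_\mu(q,\eps)=\log\Phi_\eps+O_q(1)$, and consequently
\[
D_\mu^+(q)=\limsup_{\eps\to0+}\frac{\log\Phi_\eps}{(q-1)\log\eps},\qquad D_\mu^-(q)=\liminf_{\eps\to0+}\frac{\log\Phi_\eps}{(q-1)\log\eps}.
\]
By Theorem~\ref{HDIM AltDef} it then suffices to prove the two bounds $D_\mu^+(q)\le\dim_P^-(\mu)$ and $D_\mu^-(q)\le\dim_H^-(\mu)$.

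For the packing bound, fix $s>\dim_P^-(\mu)$. By Theorem~\ref{HDIM AltDef} the set $A:=\{x:\gamma_\mu^+(x)<s\}$ has $\mu(A)>0$, and $\gamma_\mu^+(x)<s$ forces $\mu([x-r,x+r])>r^s$ for all sufficiently small $r$; hence $\mu(A_N)>0$ for some $N$, where $A_N:=\{x:\mu([x-r,x+r])>r^s\ \text{for all }r<1/N\}$. For $\eps<1/N$ this gives $\Phi_\eps\ge\int_{A_N}\mu([x-\eps,x+\eps])^{q-1}\,d\mu\ge\mu(A_N)\,\eps^{s(q-1)}$, so $\frac{\log\Phi_\eps}{(q-1)\log\eps}\le s+\frac{\log\mu(A_N)}{(q-1)\log\eps}\to s$, whence $D_\mu^+(q)\le s$; letting $s\downarrow\dim_P^-(\mu)$ finishes this half.

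The inequality $D_\mu^-(q)\le\dim_H^-(\mu)$ is the delicate step, because $\gamma_\mu^-(x)<s$ only supplies $\mu([x-r,x+r])>r^s$ along a sequence $r=r_k(x)\to0$ that depends on $x$, so one cannot integrate at a single fixed scale. Here I would argue as follows. Fix $s>\dim_H^-(\mu)$ and set $c_0:=\mu(A)>0$, $A:=\{x:\gamma_\mu^-(x)<s\}$. For each small $\delta$, choose for every $x\in A$ one radius $r_x<\delta$ with $\mu([x-r_x,x+r_x])>r_x^s$, and apply the Besicovitch covering theorem to $\{[x-r_x,x+r_x]\}_{x\in A}$ to extract a disjoint subfamily $\{[x_i-r_i,x_i+r_i]\}_i$ with $\sum_i\mu([x_i-r_i,x_i+r_i])\ge c_0/C$, $C$ an absolute constant. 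Group these intervals by dyadic scale $r_i\in[2^{-k-1},2^{-k})$. Since $[x_i-r_i,x_i+r_i]\subset[x_i-2^{-k},x_i+2^{-k}]$ and $\mu([x_i-r_i,x_i+r_i])>r_i^s\ge 2^{-(k+1)s}$ force $x_i\in G_k:=\{x:\mu([x-2^{-k},x+2^{-k}])>2^{-(k+1)s}\}$, disjointness bounds the total contribution of the $k$-th group by the $\mu$-measure of the $2^{-k}$-neighbourhood of $G_k$; by the triangle inequality that neighbourhood is contained in $\widetilde G_k:=\{x:\mu([x-2^{-(k-1)},x+2^{-(k-1)}])>2^{-(k+1)s}\}$. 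Summing over the admissible $k$ and letting $\delta\to0$ (so the smallest admissible scale index tends to $\infty$) yields $\sum_k\mu(\widetilde G_k)=\infty$, hence by the root test $\limsup_k\mu(\widetilde G_k)^{1/k}\ge1$; since also $\mu(\widetilde G_k)\le\mu(\R)$, along a subsequence $k_j$ we get $\frac1{k_j}\log\mu(\widetilde G_{k_j})\to0$. On that subsequence, $\Phi_{2^{-(k_j-1)}}\ge\mu(\widetilde G_{k_j})\,2^{-(k_j+1)s(q-1)}$, so $\frac{\log\Phi_{2^{-(k_j-1)}}}{(q-1)\log 2^{-(k_j-1)}}\le \frac{(k_j+1)s}{k_j-1}+\frac{-\log\mu(\widetilde G_{k_j})}{(k_j-1)(q-1)\log 2}\to s$, whence $D_\mu^-(q)\le s$; letting $s\downarrow\dim_H^-(\mu)$ completes the proof.

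The main obstacle is precisely this last step: turning the purely pointwise, $x$-dependent ``thick scale'' information into lower bounds for $\Phi_\eps$ along a sequence of scales common to a $\mu$-substantial set of points. Three devices make it work: a covering lemma to render the competing thick intervals disjoint so their masses add, dyadic regrouping to localize the scale, and the elementary fact that a divergent series of nonnegative terms has $n$-th roots with upper limit at least $1$. The only bookkeeping to watch is the harmless one-step dyadic shift and constant change in passing from neighbourhoods of $G_k$ to the condition defining $\widetilde G_k$; the packing ($\limsup$) half, by contrast, is immediate once the comparison with $\Phi_\eps$ is in place.
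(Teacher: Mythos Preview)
The paper does not prove Theorem~\ref{DHP}; it is quoted from \cite{BGTJMPA01} and used as a black box (see the line ``\cite[Prop.4.1]{BGTJMPA01}'' in the statement and the absence of any proof environment for it). There is therefore no in-paper argument to compare your proposal against.

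On its own merits your proof is sound. The comparability $S_\mu(q,\eps)\asymp_q\Phi_\eps$ is correct as written (the $3^{q-1}$ bound covers both $q\ge2$ by convexity and $1<q<2$ by subadditivity, and Young with exponents $q,\,q/(q-1)$ handles the off-diagonal terms). The packing half is immediate. For the Hausdorff half your chain---Besicovitch to make the thick intervals disjoint, dyadic regrouping to show the group masses are dominated by $\mu(\widetilde G_k)$, and the divergence $\sum_{k\ge K}\mu(\widetilde G_k)\ge c_0/C$ for every $K$ forcing $\limsup_k k^{-1}\log\mu(\widetilde G_k)=0$---is correct; the only things to keep clean in a write-up are that the tail estimate (not just divergence of the full series) is what you actually establish, and that the subsequence $k_j$ must be chosen with $\mu(\widetilde G_{k_j})>0$ so the logarithm is finite, which the $\limsup\ge0$ conclusion guarantees.
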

\begin{proof}[\bf Proof of Corollaries \ref{CD-} and \ref{CD+}]
Corollaries of \ref{CD-} and \ref{CD+} follow from Theorems \ref{D+}, \ref{D-} and \ref{HDIM AltDef}.
\end{proof}
\begin{proof}[\bf Proof of Corollaries   \ref{newCD-} and \ref{newCD+}]
Corollaries of \ref{newCD-} and \ref{newCD+} follow from Theorems \ref{D+}, \ref{D-}, \ref{HDIM AltDef} and \ref{DHP}.
\end{proof}
\subsection{$m$-functions and power-law subordinacy}
Let $\tilde {m}_j(z)$ \footnote{ We use the definitions in  \cite{jlast2}. We should mention that  in \cite{tes95}, the authors used   $-m_2$  as  the $m$ function of the  left half line problem.}, $j=1,2$ be the $m$ functions of the right and left half line problems with boundary condition
\begin{equation}\label{mbd}
  u(0)\cos2\pi x_0+u(1)\sin2\pi x_0=0.
\end{equation}
Let ${m}_j(z)$, $j=1,2$ be the $m$ functions of the right and left half line problems with  the Dirichlet boundary condition, namely
\begin{equation}\label{mbdd}
  u(0)=0.
\end{equation}

It is well known (c.f. (2.5) in \cite{jlast2} or (1.27) in \cite{tes95}) that
\begin{equation}\label{glast12}
  {m}_1=\frac{\tilde {m}_1\cos2\pi x_0-\sin2\pi x_0}{\tilde {m}_1\sin2\pi x_0+\cos2\pi x_0},
\end{equation}
and
\begin{equation}\label{glast13}
  {m}_2=\frac{\tilde {m}_2\cos2\pi x_0+\sin2\pi x_0}{\cos2\pi x_0-\tilde {m}_2\sin2\pi x_0}.
\end{equation}
Let $\mu_{\delta_0}$ and  $\mu_{\delta_1}$ be the spectral measures of vectors $\delta_0$ and $\delta_1$ respectively for operator $H_{\lambda,\alpha,\theta}$.  
Let $M_1$ and $M_2$  be the  Borel transformation of $\mu_{\delta_0}$ and $\mu_{\delta_1}$, that is
\begin{equation}\label{G.borel1}
    M_1(z)=\int_{\R} \frac{d\mu_{\delta_0}(x)}{x-z},
\end{equation}
and 
\begin{equation}\label{G.borel2}
    M_2(z)=\int_{\R} \frac{d\mu_{\delta_1}(x)}{x-z}.
\end{equation}
 
It is known that (c.f. (1.71) in \cite{tes95} )
\begin{equation}\label{glast14}
    M_{1}=-\frac{1}{{m}_1+{m}_2},
\end{equation}
and 
\begin{equation}\label{glast15}
    M_{2}=\frac{{m}_1 {m}_2}{{m}_1+{m}_2}.
\end{equation}

For any  function  $u$ on $\Z$, define $||u||_{L_1,L_2}^2$ as follows,
\begin{eqnarray*}
	||u||_{L_1,L_2}^2&=& \sum_{j=0} ^{[ L_1]}   |u(j)| ^2+ (L_1-[ L_1]) |u([ L_1]+1) |^2 \\
	& &+\sum_{j=1} ^{[ L_2]} |u(-j)| ^2+(L_2-[ L_2]) |u(-[ L_2]-1) |^2,
\end{eqnarray*}
for $L_1,L_2\geq 0$.

Denote  by $ u_{x}(\cdot,E)$   the solution of $Hu=Eu$    with  the initial  condition $u_{x}(0,E)=\sin 2\pi x$ and  $u_{x}(1,E)=-\cos 2\pi x$. 

By the constancy of the Wronskian, one has that for all $n\in\Z$,
\begin{equation}\label{gnov112}
 \det  \left(
	\begin{array}{cc}
		u_x(n+1,E)  & 	u_{x+1/4}(n+1,E)  \\
		u_x(n,E)& u_{x+1/4}(n,E)\\
	\end{array}
	\right) =1.
\end{equation}
The following theorem is essentially proved in \cite{kkl}. The precise formulation presented here is from \cite{dtjam} (see page 127).
\begin{thm}\label{kklthm}
    Denote  by 

\begin{eqnarray*}
  a(L) &=& ||u_{1/4+x_0}||^2_{L,0} ,\\
  b(L) &=& ||u_{x_0}||^2_{L,0}, \\
  \omega(L) 
    &=& \max_{x}||u_x(\cdot,E)||_{L,0}\cdot\min_{x}||u_{x}(\cdot,E)||_{L,0}.
\end{eqnarray*}

Let $L(\eps)$ be such that
\begin{equation}\label{G.Lnew}
  \omega(L(\eps))=\frac{1}{\eps}.
\end{equation}

Then we have that for an absolute constant  $C$,
\begin{equation*}
    \Im \tilde {m}_1(E+i\eps)\geq \frac{1}{C} \frac{1}{\eps} \frac{1}{b(L(\eps))}.
\end{equation*}
\end{thm}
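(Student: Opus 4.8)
I would follow the Weyl–solution / critical–scale scheme of \cite{kkl,dtjam}. The first step is to realize $\Im\tilde m_1(E+i\eps)$ as an $\ell^2$–norm of the Weyl solution and truncate it at scale $L(\eps)$. Let $\psi=\psi(\cdot,E+i\eps)$ be the solution of $Hu=(E+i\eps)u$ that is $\ell^2$ at $+\infty$, normalized so that its boundary value in the sense of \eqref{mbd} equals $1$; equivalently $\psi=u_{x_0+1/4}(\cdot,E+i\eps)+\tilde m_1(E+i\eps)\,u_{x_0}(\cdot,E+i\eps)$. A summation–by–parts (Green's identity) computation, together with the vanishing of the boundary Wronskian at $+\infty$ (valid since $\psi\in\ell^2$), gives the exact identity $\Im\tilde m_1(E+i\eps)=\eps\sum_{n\ge1}|\psi(n)|^2$. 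Truncating at $[L(\eps)]$ and using the three–term recurrence to dominate the single–site terms $|\psi(0)|^2$ and $|\psi([L(\eps)]+1)|^2$ by a bounded multiple (depending only on $E$ and $\sup_n|v_n|$) of $\sum_{n=1}^{[L(\eps)]}|\psi(n)|^2$, one obtains $\Im\tilde m_1(E+i\eps)\ge c\,\eps\,\|\psi\|^2_{L(\eps),0}$.

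The core of the argument is the comparison of $\psi$ on $[0,L(\eps)]$ with a genuine solution at the \emph{real} energy $E$. Let $w$ solve $Hw=Ew$ with $w(0)=\psi(0)$, $w(1)=\psi(1)$. Since $(H-E)\psi=i\eps\psi$ and $\psi-w$ vanishes at $0,1$, variation of parameters gives
\[
|\psi(n)-w(n)|\ \le\ \eps\sum_{k=1}^{n-1}\bigl|e_1(n)e_2(k)-e_1(k)e_2(n)\bigr|\,|\psi(k)|,
\]
where $e_1=u_{x_1}(\cdot,E)$, $e_2=u_{x_2}(\cdot,E)$ are the extremal boundary–phase solutions, $\|e_1\|_{L(\eps),0}=\max_x\|u_x(\cdot,E)\|_{L(\eps),0}$ and $\|e_2\|_{L(\eps),0}=\min_x\|u_x(\cdot,E)\|_{L(\eps),0}$. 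As principal axes of the positive–definite form $u\mapsto\|u\|^2_{L(\eps),0}$ on the two–dimensional solution space, $e_1$ and $e_2$ are $\langle\cdot,\cdot\rangle_{L(\eps),0}$–orthogonal; parametrizing solutions by the boundary phase $x$, this orthogonality forces $x_1-x_2\equiv\tfrac14$, so $W(e_1,e_2)=\pm1$ by \eqref{gnov112} — which is why no Wronskian denominator appears. Cauchy–Schwarz applied to the two $k$–sums then gives $\|\psi-w\|_{L(\eps),0}\le 2\eps\,\|e_1\|_{L(\eps),0}\|e_2\|_{L(\eps),0}\,\|\psi\|_{L(\eps),0}=2\eps\,\omega(L(\eps))\,\|\psi\|_{L(\eps),0}=2\,\|\psi\|_{L(\eps),0}$ by \eqref{G.Lnew}, whence $\|\psi\|_{L(\eps),0}\ge\tfrac13\|w\|_{L(\eps),0}$. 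Because $\psi$ has boundary value $1$, so does $w$, hence $w=u_{x_0+1/4}(\cdot,E)+c\,u_{x_0}(\cdot,E)$ for some $c\in\mathbb C$, and the distance–to–a–line formula gives
\[
\|w\|^2_{L(\eps),0}\ \ge\ \min_{c\in\mathbb C}\|u_{x_0+1/4}+c\,u_{x_0}\|^2_{L(\eps),0}\ =\ \frac{\det\mathrm{Gram}_{L(\eps)}(u_{x_0+1/4},u_{x_0})}{b(L(\eps))}\ =\ \frac{\omega(L(\eps))^2}{b(L(\eps))}\ =\ \frac1{\eps^2\,b(L(\eps))},
\]
the Gram determinant being evaluated by a determinant–$(\pm1)$ change of basis (again \eqref{gnov112}) to the orthogonal pair $(e_1,e_2)$, for which it equals $\|e_1\|^2_{L(\eps),0}\|e_2\|^2_{L(\eps),0}=\omega(L(\eps))^2$. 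Chaining the estimates, $\Im\tilde m_1(E+i\eps)\ge c\,\eps\,\|\psi\|^2_{L(\eps),0}\ge\tfrac{c}{9}\,\eps\,\|w\|^2_{L(\eps),0}\ge\tfrac1C\,\tfrac1{\eps\,b(L(\eps))}$.

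The hard part is the comparison step: one must work in $\ell^2$ on $[0,L(\eps)]$ with precisely the principal–axis basis, so that the product of norms governing the Duhamel error is the geometric mean $\omega(L)=\bigl(\max_x\|u_x\|_{L,0}\bigr)\bigl(\min_x\|u_x\|_{L,0}\bigr)$ rather than the generally larger $\|u_{x_0}\|_{L,0}\|u_{x_0+1/4}\|_{L,0}$; this is exactly what makes $\omega(L(\eps))=1/\eps$ the sharp threshold at which the comparison is lossless up to a constant, and $W(e_1,e_2)=\pm1$ is what keeps the Green kernel of size $O(|e_1|\,|e_2|)$ with no compensating denominator. The remaining ingredients — the Green's identity of the first step and the endpoint / fractional–part bookkeeping — are routine and are carried out in \cite{kkl,dtjam}.
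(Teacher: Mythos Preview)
The paper does not give its own proof of this theorem; it is quoted as a known result, with the sentence ``The following theorem is essentially proved in \cite{kkl}. The precise formulation presented here is from \cite{dtjam} (see page 127).'' Your outline is a faithful and correct reconstruction of that argument: the Green's identity for $\Im\tilde m_1$, the variation-of-parameters comparison at the critical scale $\omega(L(\eps))=1/\eps$, and the Gram-determinant lower bound are exactly the ingredients of the Killip--Kiselev--Last scheme as recast in \cite{dtjam}. Your observation that the extremizers of $x\mapsto\|u_x\|_{L,0}$ differ in phase by $1/4$ (so that $W(e_1,e_2)=\pm1$ and the Gram determinant equals $\omega(L)^2$) is the right way to see why the ``$\max\cdot\min$'' product, rather than $a(L)b(L)$, governs the critical scale; this is indeed the point of the formulation on p.~127 of \cite{dtjam}. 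One cosmetic remark: the constant you produce in the truncation step depends on $|E|$ and $\sup_n|v_n|$, so ``absolute'' in the statement should be read as ``universal given a bound on the potential and the energy'' --- which is all that is used downstream, since the spectrum sits in $[-2-2\lambda,2+2\lambda]$.
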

\subsection {Lyapunov exponent}
Let
	\begin{equation}\label{G.transfer}
		A_{k}^E(\theta)=\prod_{j=k-1}^{0 }A^E(\theta+j\alpha)=A^E(\theta+(k-1)\alpha)A^E(\theta+(k-2)\alpha)\cdots A^E(\theta)
	\end{equation}
	and
	\begin{equation}\label{G.transfer1}
		A_{-k}^E(\theta)=(A^E_{k} (\theta-k\alpha))^{-1}
	\end{equation}
	for $k\geq 1$,
	where $A^E(\theta)=\left(
	\begin{array}{cc}
		E- v(\theta) & -1 \\
		1& 0\\
	\end{array}
	\right)
	$.
	$A_{k}^E$  is called the ($k$-step) transfer matrix for the quasi-periodic Schr\"odinger operator \eqref{qp}.

	The Lyapunov exponent for the quasi-periodic Schr\"odinger operator \eqref{qp} 
	is given  by
	\begin{equation}\label{L}
		L(E)=\lim_{|k|\rightarrow\infty} \frac{1}{k}\int_{\mathbb{R}/\mathbb{Z}} \ln \| A_k^E(\theta)\|d\theta.
	\end{equation}
 
\section{Proof of Theorem \ref{D-}}
As a warm-up, we start with the proof of Theorem \ref{D-} because some of the techniques that will play a role later appear here in an easier  version. 
The proof  follows the ideas of that of Lemma 3.3 in   \cite{djls}. In our terms they prove in the case $m=1$
that $\gamma_\mu^-(x) \le \varsigma$ under the assumption that $\gamma_\mu^-(x) \le 1=m$ (which holds for $\mu$-a.e. $x \in {\R}$.)
Since we prove our result for any single point $x$, we should first establish that $\gamma_\mu^-(x) \le m$ before proving that
$\gamma_\mu^-(x) \le \varsigma$. Even for $m=1$ our result is a bit more general, since it is possible that $1 <\gamma_\mu^-(x) \le  \varsigma$.
\begin{proof}[\bf Proof of Theorem \ref{D-}]
Let $\eps \in (0,1]$. For any integer $k \ge 0$, define
$$
A_k=\{ y \in {\R} : \ k \eps \le |x-y| < (k+1)\eps \},
$$
and
\begin{equation*}
a_k=\mu((x-k \eps, x+ k\eps)), k \ge 1; a_0=0.
\end{equation*}
Observe that $\mu (A_k)=a_{k+1}-a_k$ (noting that $\mu$ is continuous in the present paper). We can bound from above
\begin{align}
J_{\mu,m}(x,\eps) &= \sum_{k=0}^{\infty}\eps^m  \int_{A_k} {d \mu(y) \over |x-y|^m +\eps^m}\nonumber\\
&\le  \sum_{k=0}^{\infty} {1 \over k^m +1} (a_{k+1}-a_k).\label{newequa1}
\end{align}

Therefore, one has that
\begin{equation}\label{me12}
\sum_{k=0}^N { a_{k+1} -a_k \over k^m+1}=\sum_{k=1}^N a_k \left( {1 \over (k-1)^m +1} -{1 \over k^m +1} \right)+
{a_{N+1} \over N^m +1} .
\end{equation}
Since
\begin{equation}\label{newequa2}
{1 \over (k-1)^m +1} -{1 \over k^m +1} \sim \frac{C}{k^{m+1}}, \ k \to \infty,
\end{equation}
one has  that  $\sum_{k=1}^{\infty} a_k \left( \frac{1}{ (k-1)^m +1} -\frac{1} {k^m +1} \right)$ converges. 

By \eqref{newequa1}, \eqref{me12} and \eqref{newequa2},
we  have that
\begin{equation}\label{newequa6}
 J_{\mu,m}(x,\eps)\leq C\sum_{k=1}^{\infty}\frac{a_k}{k^{m+1}}.
\end{equation}

We will first show that $\gamma_\mu^-(x) \le m$ and next that $\gamma_\mu^-(x) \le \varsigma$.
 Suppose that $\gamma_\mu^-(x)>m$ (in particular, including $\gamma_\mu^-(x)=\infty$). Take any $\nu$ with $ 0<\nu<\gamma_\mu^-(x)-m$.
The definition of $\gamma_\mu^-(x)$ implies that for any $\eta \in (0,1]$,
\begin{equation}\label{me14d}
\mu([x-\eta, x+\eta]) \le C \eta^{m+\nu}.
\end{equation}

It follows from (\ref{me14d}) that for any positive integer $k$,
\begin{equation}\label{newequa7}
a_k=\mu ((x-k \eps, x+k \eps)) \le C k^{m+\nu}  \eps^{m+\nu} .
\end{equation}

Let $N=[\frac{1}{\epsilon}]$.  By \eqref{newequa6} and \eqref{newequa7},  we have
\begin{align}
J_{\mu,m}(x,\epsilon)
& \leq C \left(\sum_{k=1}^N+\sum_{k=N+1}^{\infty}\right) {a_k \over k^{m+1}} \nonumber\\
&\leq   C\sum_{k=1}^N {\eps^{m+\nu} k^{m+\nu} \over k^{m+1}} +
C \sum_{k=N+1}^{\infty} {1 \over k^{m+1}} \nonumber\\
& \leq C \epsilon^{m+\nu} N^{\nu}+CN^{-m}\nonumber \\
&\leq C\epsilon^m.
\end{align}

It implies 
\begin{equation}\label{newequa3}
\eps^{-\varsigma}J_{\mu,m}(x,\epsilon)\leq C \epsilon^{m-\varsigma}.
\end{equation}
Since $m>\varsigma$,  \eqref{newequa3} is in contradiction with  \eqref{newequa4}.
We conclude that $\gamma_\mu^-(x) \le m$.

If $\gamma_\mu^-(x)=0$,  Theorem \ref{D-}  has nothing to be proved.   Assume that   $\gamma_\mu^-(x)>\varsigma$. 
Let  $\delta$ be such that $\varsigma<\delta<\gamma_\mu^-(x)$.  By the definition of $\gamma_\mu^-(x)$, one has  that 
$$
\mu([x-\eta, x+\eta]) \le C \eta^{\delta},
$$
and hence
\begin{equation}\label{newequa8}
a_k=\mu ((x-k \eps, x+k \eps)) \le C k^{\delta}  \eps^{\delta} .
\end{equation}
By \eqref{newequa6} and \eqref{newequa8}, one has
\begin{align*}
J_{\mu,m}(x,\epsilon)
&\leq   C\sum_{k=1}^{\infty} \epsilon^{\delta}k^{\delta-m-1}\\
& \leq C\epsilon^{\delta},
\end{align*}
where the second inequality holds because 
 $\delta<\gamma_\mu^-(x) \le m$.
It implies 
\begin{equation}\label{newequa5}
\eps^{-\varsigma}J_{\mu,m}(x,\epsilon)\leq C \epsilon^{\delta-\varsigma}.
\end{equation}
Since $\delta>\varsigma$,  \eqref{newequa5} contradicts  \eqref{newequa4}.
\end{proof}


\section{Proof of Theorem \ref{D+}}
\begin{proof}

	{\bf Case 1: $\gamma_\mu^-(x)=0$}
	
	By  assumption, there is a constant $C>0$ such that 
	\begin{equation}\label{me8}
	\eps^m \int_{\R} {d \mu(y) \over |x-y|^m +\eps^m} \ge \frac{1}{C} \eps^\varsigma,
	\end{equation}
	for any $\epsilon>0$.
	
	Define two complementary sets:
	$$
	A=\{ y \in {\R} : |x-y| \le \eta \}, \ \ B=\{ y \in {\R} : |x-y| > \eta \}
	$$
	with some  $\eta>0$. Since  $\mu(\R)<\infty$, we  have that
	\begin{equation}\label{newequa10}
	\eps^m \int_B {d \mu(y) \over |x-y|^m +\eps^m} \le \eps^m \eta^{-m} \int_B d \mu (y) \leq C (\eps/ \eta)^m.
	\end{equation}

For given $\eta>0$, let us choose $\epsilon$ so that 
	\begin{equation}\label{newequa11}
C  (\eps/\eta)^m =\frac{1}{C_1}\eps^\varsigma,
	\end{equation}
	where $C_1$ is a constant which  is  larger than $2C$.
	
	By \eqref{me8}, \eqref{newequa10} and \eqref{newequa11}, we have
	\begin{align}
	\eps^m \int_A {d \mu(y) \over |x-y|^m +\eps^m} &\geq 	\eps^m \int_{\R} {d \mu(y) \over |x-y|^m +\eps^m} -	\eps^m \int_B {d \mu(y) \over |x-y|^m +\eps^m} \nonumber\\
	&\geq \frac{1}{2C}\epsilon^{\varsigma}. \label{newequa12}
	\end{align}

	On the other hand,
	\begin{equation}\label{me9}
	\eps^m \int_A {d \mu(y) \over |x-y|^m +\eps^m} \le  \int_A d \mu (y)= \mu([x-\eta, x+\eta]).
	\end{equation}
	It follows from \eqref{newequa11}, (\ref{newequa12}) and  (\ref{me9}), that
	\begin{equation}\label{me11}
	\mu([x-\eta, x+\eta]) \ge \frac{1}{C} \eta^{m \varsigma \over m-\varsigma}.
	\end{equation}
	This completes the proof for this case.
	
{\bf Case 2: $\gamma_\mu^-(x)>0$}	

 Let $N=[\eta/\eps]$ and
$$
B=\{ y \in {\R} \ : |x-y|>\eta \}, \ D=\{  y \in {\R} \ : |x-y|>N \eps \}.
$$
It is clear that
$B\subset D$. Similarly to \eqref{newequa6}, one has
\begin{align}
\eps^m \int_B {d \mu(y) \over |x-y|^m +\eps^m} &\le \eps^m \int_D {d \mu(y) \over |x-y|^m +\eps^m}\nonumber \\
&\le C \sum_{k=N}^{\infty} {a_k \over k^{m+1}}.\label{newequa13}
\end{align}

Let $\delta \in (0, \gamma_\mu^-(x))$.  By \eqref{newequa8} and \eqref{newequa13}, one has that
\begin{align}
\eps^m \int_B {d \mu(y) \over |x-y|^m +\eps^m}& \leq C\epsilon^{\delta}\sum_{k=N}^{\infty}k^{\delta-m-1} \nonumber\\
& \leq C \epsilon^{\delta} N^{\delta-m}\nonumber\\
&\leq C \epsilon^{m}\eta^{\delta-m}.
\end{align}

Fix $\eta$  and choose $\epsilon$ such that  $C \eps^m \eta^{\delta -m}=\frac{1}{C_1} \eps^{\varsigma}$. Arguing as in the proof of  Case 1, we obtain
$$
\mu([x-\eta, x+\eta]) \ge \frac{1}{C} \eta^\rho,
$$
where $\rho=\varsigma (m-\delta) /(m-\varsigma)$. Letting $\delta\to \gamma_\mu^-(x)$, we complete the proof.

\end{proof}
\section{ Proof of Theorem \ref{upD+}}

\begin{proof} 
Let $m>1, p \in (1/m, 1]$. 
	Set 
	\begin{equation*}
	f(x)=	\liminf_{\eps\to 0+}  \eps^{-\varsigma}J_{\mu,m}(x,\eps).
	\end{equation*}
	By assumption, one has 
	\begin{equation*}
f(x) >0
	\end{equation*}
	for  all $x\in A$. 
	Let $$ I(\eps):=\int_{\R} d \mu (x) J^p_{\mu,m}(x, \eps) \ge \int_A d \mu (x) J_{\mu,m}^p(x, \eps) .$$
	By Fatou's lemma,
\begin{equation}\label{me18}
\liminf_{\epsilon\to 0+}  I(\eps) \eps^{-p \varsigma}>\frac{1}{C}.
\end{equation}

Let us now bound  $I(\eps)$ from above. Define $I_j=[j \eps, (j+1)\eps)$. Then

\begin{equation}\label{me19}
I(\eps)=\sum_{j \in {\Z}} \int_{I_j}d \mu (x) \left( \sum_{k \in {\Z}} \int_{I_k} d \mu (y)
{ \eps^m \over |x-y|^m +\eps^m } \right) ^p.
\end{equation}
It is clear that for any $x \in I_j, \ y \in I_k$ one has $|x-y| \ge \eps (|k-j|-1)$ if $|k-j| \ge 2$. Define
$$
s(l)=0, |l| \le 1;  \ \ s(l)=|l|-1, \ |l| \ge 2.
$$
Thus, $|x-y| \ge \eps s(k-j)$ for all $x \in I_j, \ y \in I_k$.
It follows from (\ref{me19}) that
\begin{align}
    I(\eps) \le &\sum_{j \in {\Z}} \int_{I_j} d \mu (x) \left( \sum_{k \in {\Z}} \int_{I_k} {d \mu (y) \over s^m(k-j) +1} \right) ^p\nonumber \\
    =&\sum_j b_j \left( \sum_k {b_k \over s^m(k-j)+1} \right) ^p,
\label{me20}\end{align}
where $b_j=\mu (I_j)$. Using the elementary bound $(\sum_k t_k)^p \le \sum t_k^p, \ p \in (0,1] $ (with any number of terms in the sum), one has that
\begin{align}
I(\eps)  
&\leq
\sum_j b_j \sum_i {b_{i+j}^p \over (s^m(i)+1)^p }\nonumber\\
&=\sum_i {1 \over (s^m(i)+1)^p} \sum_j b_j b_{i+j}^p\nonumber\\
& = \sum_i {1 \over (s^m(i)+1)^p} h_i\label{newequa16},
\end{align}
where 
 $h_i=\sum_j b_j b_{i+j}^p$. 
Applying H\"older inequality with $\alpha=p+1, \alpha '=(p+1)/p$, we have that  for any $i$
\begin{align}
h_i &\le\left( \sum_j b_j^\alpha \right) ^{1/\alpha} \left( \sum_j b_{i+j}^{p \alpha'} \right) ^{1/\alpha '}\nonumber\\
&\leq 
\sum_j b_j^{p+1}\nonumber\\
& = S_\mu(p+1, \eps)\label{newequa17},
\end{align}
where $S_{\mu}$ is defined in \eqref{renyi}.
It is clear that 
\begin{equation}\label{me21}
 \sum_i {1 \over (s^m(i)+1)^p} <\infty
\end{equation}
since $mp>1$. 
Finally, by \eqref{newequa16}, \eqref{newequa17} and \eqref{me21}, we have
$$
I(\eps) \le C S_\mu (p+1, \eps),
$$
Together with (\ref{me18}) we obtain
$$
\liminf_{\eps \to 0+}   \eps^{-p \varsigma} S_\mu(p+1, \eps) \ge \frac{1}{C}.
$$
This  implies that $D_\mu ^+(p+1) \le \varsigma$. Since it is true for
any $p \in (1/m, 1]$ and $D_\mu ^+(q)$ is a continuous decreasing
function of $q$ (Recall that    $D_\mu^+(q)$ is non-increasing and continuous with respect to  $q$ \cite{BGTJMPA01,BGTDUKE01}), we obtain that $D_\mu ^+(q) \le \varsigma$ for all $q \ge 1+1/m$.
 
\end{proof}

 \section{ Basics for applications to the almost Mathieu operator }
Here we present the more detailed technical statements from \cite{jl1} that are needed for our proof.

We call $E$ generalized eigenvalue with the corresponding generalized eigenfunction  $u$   if $Hu=Eu,$ $u$ is nontrivial,
and
\begin{equation}\label{gen}
  |u(n)|\leq C(1+|n|).
\end{equation}

For  readers' convenience, we summarize the key assumptions and notations below: 
\begin{itemize}
\item  Frequency  $\alpha$ is irrational.  Let  $ \frac{p_n}{q_n} $  be  the continued fraction  approximants   of  $\alpha$ and $\beta(\alpha)$ be given by \eqref{Def.Beta}.
    \item 
 The statement of the theorem is only nontrivial if frequency  $\alpha$  satisfies  $ 0<\beta(\alpha) <\infty$, so we will assume that.
 \item $1<\lambda\leq e^{\beta}$.
 \item $\theta$ is Diophantine with respect to $\alpha$.
\item   $E$ is a generalized  eigenvalue.
\item  $\phi$ is the normalized generalized eigenfunction, namely $\phi$  satisfies  \eqref{gen} and $\phi^2(0)+\phi^2(1)=1$.
\item $\varepsilon$ is sufficiently small.
\end{itemize}

For simplicity, sometimes we omit the dependence on  parameters $\alpha,\lambda,\theta,E$.
  \par
Denote by
$$ P_k(\theta)=\det(R_{[0,k-1]}(H_{\lambda,\alpha,\theta}-E) R_{[0,k-1]}).$$

Note that, (see e.g. (32) in \cite{jl1}) by compactness, subadditivity, and the form of $A_k^E(\theta), $
for any $\varepsilon>0$,
\begin{equation}\label{Numerator}
    | P_k(\theta)|\leq e^{(\ln \lambda+\varepsilon)k}
\end{equation}
for $k$ large enough.



For interval $J\subset \mathbb{Z},$ let 
 $$G_J(x,y)=(R_{J}(H_{\lambda,\alpha,\theta}-E) R_{J})^{-1}(x,y) $$
 be the matrix elements of the Green function of the restriction.
    By Cramer's rule,  for given  $x_1$ and $x_2=x_1+k-1$, with
     $ y\in J=[x_1,x_2] \subset \mathbb{Z}$,  one has
     \begin{eqnarray}
       |G_J(x_1,y)| &=&  \left| \frac{P_{x_2-y}(\theta+(y+1)\alpha)}{P_{k}(\theta+x_1\alpha)}\right|,\label{Cramer1}\\
       |G_J(y,x_2)| &=&\left|\frac{P_{y-x_1}(\theta+x_1\alpha)}{P_{k}(\theta+x_1\alpha)} \right|.\label{Cramer2}
     \end{eqnarray}

\begin{definition}\label{Def.Regular}
Fix $t > 0$. A point $y\in\mathbb{Z}$ will be called $(t,k)$ regular if there exists an
interval $[x_1,x_2]$  containing $y$, where $x_2=x_1+k-1$, such that
\begin{equation*}
  | G_{[x_1,x_2]}(y,x_i)|\leq e^{-t|y-x_i|} \text{ and } |y-x_i|\geq  \frac{1}{5} k \text{ for }i=1,2.
\end{equation*}
\end{definition}
It is  easy to check  (e.g. p. 61, \cite{bgreen}) that for any eigen-equation $H\varphi=E\varphi$,
 \begin{equation}\label{Block}
   \varphi(x)= -G_{[x_1 ,x_2]}(x_1,x ) \varphi(x_1-1)-G_{[x_1 ,x_2]}(x,x_2) \varphi(x_2+1),
 \end{equation}
 where  $ x\in J=[x_1,x_2] \subset \mathbb{Z}$.

Given the eigen-equation $H\varphi=E\varphi$, 
 one has that  for large $|k_1-k_2|$ (e.g. \cite{liures}),
	\begin{equation}\label{g500}
		\left \|	\left(\begin{array}{c}
			\varphi(k_1+1) \\
			\varphi(k_1)                                                                                        \end{array}\right)\right\|
		\leq  e^{(\ln \lambda+\varepsilon) |k_1-k_2|}\left\|
		\left(\begin{array}{c}
			\varphi(k_2+1) \\
			\varphi(k_2)                                                                                        \end{array}\right)\right\|.
	\end{equation}

       \begin{definition}
     We  say that the set $\{\theta_1, \cdots ,\theta_{k+1}\}$ is $ \epsilon$-uniform if
      \begin{equation}\label{Def.Uniform}
        \max_{ x\in[-1,1]}\max_{i=1,\cdots,k+1}\prod_{ j=1 , j\neq i }^{k+1}\frac{|x-\cos2\pi\theta_j|}
        {|\cos2\pi\theta_i-\cos2\pi\theta_j|}<e^{k\epsilon}.
      \end{equation}
     \end{definition}
      Let $A_{k,r}=\{\theta\in\mathbb{R} \;|\;P_k(    \theta -\frac{1}{2}(k-1)\alpha  )|\leq e^{(k+1)r}\} $ with $k\in \mathbb{N}$ and $r>0$.
     We have the following lemma.
      \begin{lemma}\label{Le.Uniform} \cite[Lemma 9.3 ]{avila2009ten}
      Suppose  $\{\theta_1, \cdots ,\theta_{k+1}\}$ is  $ \epsilon_1$-uniform. Then there exists some $\theta_i$ in set  $\{\theta_1, \cdots ,\theta_{k+1}\}$ such that
     $\theta_i\notin A_{k,\ln\lambda-\epsilon}$ if    $ \epsilon>\epsilon_1$ and $ k$
      is sufficiently large.
      \end{lemma}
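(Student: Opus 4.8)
## Proof Proposal for Lemma \ref{Le.Uniform}

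This is the statement of Lemma \ref{Le.Uniform}, which is quoted directly from \cite[Lemma 9.3]{avila2009ten}, so strictly speaking the lemma is cited rather than proved anew. Nonetheless, let me sketch how one would establish it, as the argument is instructive and self-contained.

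The plan is to argue by contradiction. Suppose that every $\theta_i$ in the set $\{\theta_1,\dots,\theta_{k+1}\}$ lies in $A_{k,\ln\lambda-\epsilon}$, i.e. $|P_k(\theta_i-\tfrac12(k-1)\alpha)|\le e^{(k+1)(\ln\lambda-\epsilon)}$ for all $i$. The key observation is that $P_k(\theta-\tfrac12(k-1)\alpha)$, viewed as a function of $\cos 2\pi\theta$, is a polynomial of degree $k$ in the variable $x=\cos 2\pi\theta$ (this follows from the structure of the transfer matrices $A_k^E$ and the symmetrization by the shift $-\tfrac12(k-1)\alpha$, which makes the determinant even in $\theta$, hence a genuine polynomial in $\cos 2\pi\theta$). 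Call this polynomial $Q(x)$; its leading coefficient has absolute value $1$ (again from the form of $A^E$). Then Lagrange interpolation at the $k+1$ nodes $x_i=\cos 2\pi\theta_i$ gives, for any $x\in[-1,1]$,
\begin{equation*}
|Q(x)| = \left| \sum_{i=1}^{k+1} Q(x_i)\prod_{j\ne i}\frac{x-x_j}{x_i-x_j}\right| \le (k+1)\,\max_i |Q(x_i)|\cdot \max_{x\in[-1,1]}\max_i \prod_{j\ne i}\frac{|x-x_j|}{|x_i-x_j|}.
\end{equation*}
By $\epsilon_1$-uniformity the last factor is at most $e^{k\epsilon_1}$, and by the contradiction hypothesis $\max_i|Q(x_i)|\le e^{(k+1)(\ln\lambda-\epsilon)}$, so $|Q(x)|\le (k+1)e^{k\epsilon_1}e^{(k+1)(\ln\lambda-\epsilon)}$ on all of $[-1,1]$.

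On the other hand, a monic polynomial of degree $k$ cannot be uniformly small on $[-1,1]$: by the extremal property of Chebyshev polynomials, $\max_{x\in[-1,1]}|Q(x)|\ge 2\cdot 4^{-k}\cdot(\text{leading coeff}) = 2^{1-2k}$. Comparing with the upper bound yields $2^{1-2k}\le (k+1)e^{k\epsilon_1}e^{(k+1)(\ln\lambda-\epsilon)}$. Since $\epsilon>\epsilon_1$ and $\ln\lambda$ is a fixed positive constant independent of $k$, for $k$ sufficiently large this inequality fails (the left side decays like $4^{-k}$, while the right side, after taking logarithms, behaves like $k(\ln\lambda-\epsilon+\epsilon_1)+O(\ln k)$; one needs $-2\ln 2 < \ln\lambda - \epsilon + \epsilon_1$, which can fail — so in fact one must use a sharper extremal bound). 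The cleaner route, and the one actually needed, is to evaluate $Q$ not on $[-1,1]$ but at a point $x_\ast$ in the complex plane (or real axis) where one has an independent \emph{lower} bound $|Q(x_\ast)|\ge e^{(k+1)\ln\lambda - o(k)}$ coming from the definition of the Lyapunov exponent $L(E)=\ln\lambda$ together with \eqref{Numerator}-type estimates — i.e. $P_k$ is typically of size $e^{k\ln\lambda}$; combined with the interpolation bound (extended to that point with a controlled Lagrange factor, which is where a slightly stronger uniformity at complex points, or a real point off $[-1,1]$, enters) one gets $e^{(k+1)\ln\lambda-o(k)} \le (k+1)e^{k\epsilon_1}e^{(k+1)(\ln\lambda-\epsilon)}$, forcing $(k+1)\epsilon \le k\epsilon_1 + o(k)$, contradicting $\epsilon>\epsilon_1$ for large $k$.

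The main obstacle is the choice of the evaluation point and the corresponding lower bound on $|Q|$ there: one needs a point where the polynomial $P_k$ (equivalently, the transfer matrix norm) is guaranteed to be exponentially large at the correct rate $\ln\lambda$, with the Lagrange interpolation coefficients still under control. For the almost Mathieu operator this is supplied by the fact that $L(E)=\ln\lambda$ on the spectrum \cite{bj} together with large-deviation / subharmonicity estimates for $\frac1k\ln|P_k|$; this is exactly the content of the argument in \cite{avila2009ten}, to which we refer for the details.
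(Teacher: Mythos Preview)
The paper does not prove this lemma; it is quoted verbatim from \cite[Lemma~9.3]{avila2009ten}, and you correctly note this at the outset. Your sketch captures the essential mechanism of the original proof: one assumes all $\theta_i\in A_{k,\ln\lambda-\epsilon}$, writes $P_k(\theta-\tfrac12(k-1)\alpha)$ as a degree-$k$ polynomial $Q$ in $\cos 2\pi\theta$, and uses Lagrange interpolation together with $\epsilon_1$-uniformity to bound $\sup_{[-1,1]}|Q|$ from above by $(k+1)e^{k\epsilon_1}e^{(k+1)(\ln\lambda-\epsilon)}$.

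Where your sketch meanders is in the lower bound. The Chebyshev extremal inequality is a red herring, and there is no need to go to a complex evaluation point or to invoke ``large-deviation / subharmonicity estimates''. The actual argument in \cite{avila2009ten} stays on $[-1,1]$: one simply uses that $\sup_\theta |P_k(\theta)|\ge e^{k(\ln\lambda-\epsilon')}$ for every $\epsilon'>0$ and $k$ large (this is Lemma~9.2 there, a direct consequence of $L(E)=\ln\lambda$ and the fact that $P_k$ is an entry of the transfer matrix $A_k^E$, so $\sup_\theta|P_k|$ is comparable to $\sup_\theta\|A_k^E\|$, whose normalized logarithm converges to $L(E)$). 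Comparing the two bounds gives $e^{k(\ln\lambda-\epsilon')}\le (k+1)e^{k\epsilon_1}e^{(k+1)(\ln\lambda-\epsilon)}$, hence $\epsilon\le\epsilon_1+\epsilon'+o(1)$; since $\epsilon'>0$ is arbitrary this contradicts $\epsilon>\epsilon_1$. So your final paragraph lands on the right ingredient, but the route to it is more circuitous than necessary.
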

     We will apply these results in the following range.
    Fix   $t_2$ such that  $\frac{9\beta-\ln \lambda}{9\beta}<t_2<1$.
 Let $\sigma>0$ be small, such that, in particular,
 \begin{equation}\label{glast9}
t_1:=\frac{\beta-\ln\lambda}{\beta}+\sigma<t_2<1.
 \end{equation}

     Define $b_n=  q_n^{t_2}$. For any $k>0$, 
        we will distinguish two cases with respect to $n$:
         \par
        (i)   $|k-\ell q_n|\leq b_n$ for some $\ell\geq 0$,  called $n$-resonance.
          \par
        (ii)     $|k-\ell q_n|> b_n$ for all $\ell\geq0$, called  $n$-nonresonance.
\par
For the  $n$-nonresonant $y$,  let $n_0$ be the least positive integer such that $4q_{n-n_0}\leq {\rm dist}(y,  q_n \mathbb{Z})$.
 Let $s$ be the
largest positive integer such that $4sq_{n-n_0}\leq {\rm dist}(y,q_n \mathbb{Z}) $.

Set $I_1, I_2\subset \mathbb{Z}$ as follows
\begin{eqnarray*}
  I_1 &=& [-sq_{n-n_0},sq_{n-n_0}-1], \\
   I_2 &=& [ y-sq_{n-n_0},y+sq_{n-n_0}-1 ],
\end{eqnarray*}
and let $\theta_j=\theta+j\alpha$ for $j\in I_1\cup I_2$. The set $\{\theta_j\}_{j\in I_1\cup I_2}$
consists of $4sq_{n-n_0}$ elements.
\par
\begin{theorem} \cite[Theorem 3.5]{jl1}\label{Th.Nonresonant}
Assume that $\theta  $ is Diophantine  with respect  to $\alpha$ and  $$ \ln\lambda+8\ln (s q_{n-n_0}/q_{n-n_0+1})/q_{n-n_0}>0.$$  Let $E$ be a generalized eigenvalue.  Suppose   either

i) $b_n\leq |y|< C b_{n+1}$

or

ii) $0\leq |y|< q_n$

is $n$-nonresonant, where $C>1$ is a fixed constant.
Then   we have  that for  large $n$,   $ y$ is $  (\ln\lambda+8\ln (s q_{n-n_0}/q_{n-n_0+1})/q_{n-n_0}-\varepsilon,4sq_{n-n_0}-1)$ regular.
\end{theorem}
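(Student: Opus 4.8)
The plan is to run the block-resolvent scheme of \cite{jl1}: among a carefully chosen family of $4sq_{n-n_0}$ phases we locate one whose associated length-$k$ block has an anomalously large restriction determinant, and we argue that the only way this is consistent with the polynomial bound \eqref{gen} on $\phi$ is that the good block sits around $y$ --- which is exactly $(t,k)$-regularity of $y$. Concretely, put $k+1:=4sq_{n-n_0}$ (so $\tfrac{k-1}{2}$ is an integer) and consider $\Theta=\{\theta_j=\theta+j\alpha:\ j\in I_1\cup I_2\}$, a set of $k+1$ phases, one half clustered around the base phase $\theta$ (indices $j$ near $0$), the other half around $\theta+y\alpha$ (indices $j$ near $y$). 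For integer $m$, $P_k(\theta+m\alpha)=\det\big(R_{[m,m+k-1]}(H_{\lambda,\alpha,\theta}-E)R_{[m,m+k-1]}\big)$ by covariance, so $\theta_j$ encodes the restriction determinant of the length-$k$ block centred at $j$.

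\emph{Step 1 (uniformity --- the main obstacle).} One must prove that $\Theta$ is $\epsilon_1$-uniform in the sense of \eqref{Def.Uniform} for an $\epsilon_1$ of the order $\ln\!\big(q_{n-n_0+1}/(sq_{n-n_0})\big)/q_{n-n_0}$, with the implied constant explicit from the continued-fraction data of $\alpha$ and the Diophantine constants of $2\theta$. This reduces to a lower bound on $\prod_{j'\neq j}|\cos2\pi\theta_j-\cos2\pi\theta_{j'}|$ and an upper bound, uniform over $x\in[-1,1]$, on $\prod_{j}|x-\cos2\pi\theta_j|$. Writing $x=\cos2\pi\xi$ and $\cos2\pi\xi-\cos2\pi\theta_j=-2\sin\pi(\xi-\theta_j)\sin\pi(\xi+\theta_j)$ turns both into products $\prod\|\xi\pm\theta-j\alpha\|_{\R/\Z}$ over the arithmetic progressions composing $I_1\cup I_2$. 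Inside a single length-$q_{n-n_0}$ block the points $\{j\alpha\}$ are $\gtrsim 1/q_{n-n_0}$-separated and $\lesssim 1/q_{n-n_0}$-dense on $\R/\Z$ (three-distance theorem / best-approximation properties of $q_{n-n_0}$), which controls each such product; a cross-resonance between the $0$-block and the $y$-block would instead force $\|2\theta+(j+j')\alpha\|_{\R/\Z}$ to be abnormally small for some $j+j'$ with $|j+j'|\lesssim sq_{n-n_0}\lesssim q_n$ (via $\cos a=\cos b\iff a\equiv\pm b \bmod 1$), and this is ruled out by \eqref{DCtheta} together with the range hypotheses $b_n\le|y|<Cb_{n+1}$ or $0\le|y|<q_n$. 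Those hypotheses also make $n_0,s$ well defined with ${\rm dist}(y,q_n\Z)\to\infty$, hence $k\to\infty$ as $n\to\infty$; and the hypothesis $\ln\lambda+8\ln(sq_{n-n_0}/q_{n-n_0+1})/q_{n-n_0}>0$ is, up to errors absorbed into $\varepsilon$, exactly the inequality $4\epsilon_1<\ln\lambda$ that makes the conclusions below non-vacuous. This estimate is the technical heart; everything after it is bookkeeping with Cramer's rule and \eqref{Numerator}.

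\emph{Step 2 (the good block; case $j_0\in I_2$).} By Lemma \ref{Le.Uniform} applied with $\epsilon=\epsilon_1+\varepsilon$, some $\theta_{j_0}\in\Theta$ satisfies $\theta_{j_0}\notin A_{k,\ln\lambda-\epsilon}$, i.e.\ $|P_k(\theta+x_1\alpha)|>e^{(k+1)(\ln\lambda-\epsilon_1-\varepsilon)}$ for the length-$k$ block $J_0=[x_1,x_2]$ with $x_1=j_0-\tfrac{k-1}2$, $x_2=j_0+\tfrac{k-1}2$. If $j_0\in I_2$, then $|j_0-y|\le sq_{n-n_0}=\tfrac{k+1}4$, so $y\in J_0$ with $\tfrac{k-3}4\le|y-x_i|\le\tfrac{3k-1}4$ (in particular $|y-x_i|\ge\tfrac15k$ for $n$ large) and $|y-x_1|+|y-x_2|=k-1$. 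Cramer's rule \eqref{Cramer1}--\eqref{Cramer2}, the numerator bound \eqref{Numerator}, and that identity give, for $n$ large and $i=1,2$,
\[
|G_{J_0}(y,x_i)|\ \le\ \frac{e^{(\ln\lambda+\varepsilon)(k-1-|y-x_i|)}}{e^{(k+1)(\ln\lambda-\epsilon_1-\varepsilon)}}\ \le\ e^{-(\ln\lambda+8\ln(sq_{n-n_0}/q_{n-n_0+1})/q_{n-n_0}-\varepsilon)|y-x_i|},
\]
where the leading terms of size $k\ln\lambda$ cancel and the residual rate loss is the uniformity defect $\epsilon_1$ amplified by the worst-case ratio $(k+1)/|y-x_i|\le 4$. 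By Definition \ref{Def.Regular} this is precisely the assertion that $y$ is $\big(\ln\lambda+8\ln(sq_{n-n_0}/q_{n-n_0+1})/q_{n-n_0}-\varepsilon,\ 4sq_{n-n_0}-1\big)$-regular.

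\emph{Step 3 (case $j_0\in I_1$ is impossible).} Here $|j_0|\le\tfrac{k+1}4$, so $0,1\in J_0$ at distance $\ge\tfrac14 k$ from the endpoints. Exactly as above, \eqref{Cramer1}--\eqref{Cramer2} and \eqref{Numerator} give $|G_{J_0}(x_1,0)|,\,|G_{J_0}(0,x_2)|,\,|G_{J_0}(x_1,1)|,\,|G_{J_0}(1,x_2)|\le e^{-ck}$ with $c=\tfrac18\ln\lambda>0$ (using $\lambda>1$), for $n$ large. Plugging these into \eqref{Block} for $H\phi=E\phi$ at $x=0$ and at $x=1$ and using $\phi^2(0)+\phi^2(1)=1$, we get a site $z\in\{x_1-1,\,x_2+1\}$ with $|\phi(z)|\ge\tfrac1{2\sqrt2}e^{ck}$, while $|z|\le\tfrac34 k+1$. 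Since $k\to\infty$, this contradicts the polynomial bound $|\phi(z)|\le C(1+|z|)$ from \eqref{gen} for $n$ large. Hence $j_0\notin I_1$, so $j_0\in I_2$, and Step 2 finishes the proof. The one genuinely hard ingredient is the uniformity estimate of Step 1; this is precisely what is imported from \cite{jl1}.
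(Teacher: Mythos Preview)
The paper does not prove this theorem; it is quoted from \cite[Theorem~3.5]{jl1}, with the subsequent Remark observing that the argument there (stated under $\lambda>e^{\beta(\alpha)}$) actually only uses the positivity hypothesis displayed. So there is no proof in the present paper to compare against. Your sketch is the correct outline of the \cite{jl1} argument: show $\epsilon_1$-uniformity of $\{\theta_j\}_{j\in I_1\cup I_2}$, apply Lemma~\ref{Le.Uniform} to locate a block with large determinant, convert this via Cramer's rule and \eqref{Numerator} into Green's-function decay, and rule out $j_0\in I_1$ by contradiction with the normalization of $\phi$. Steps~1 and~2 are right, including the bookkeeping that the worst-case ratio $(k+1)/|y-x_i|\le 4$ turns the uniformity defect into the factor $8$ in the rate.

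One imprecision in Step~3: you claim $c=\tfrac18\ln\lambda>0$ ``using $\lambda>1$''. That is not what the estimate gives. The same Cramer computation as in Step~2, now with $|0-x_i|\ge\tfrac{k-3}{4}$, yields $|G_{J_0}(0,x_i)|\le e^{-(t-O(\varepsilon))k/4}$ where $t:=\ln\lambda+8\ln(sq_{n-n_0}/q_{n-n_0+1})/q_{n-n_0}$ is the quantity in the hypothesis; so the correct constant is $c\approx t/4$, and its positivity comes from the full hypothesis $t>0$, not from $\lambda>1$ alone. The contradiction with \eqref{gen} then needs $tk\to\infty$, which holds once $\varepsilon$ is chosen small relative to $t$ (and in the paper's actual applications the Remark gives $t\ge\ln\lambda/9-\varepsilon$, so your constant is in the right ballpark). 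This does not break the argument, but the justification as written is off.
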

\begin{remark}
    \begin{itemize}
  \item  By the definition of $s$ and the non-resonance of $y$, one has
  \begin{equation*}
    4(s+1)q_{n-n_0}\geq b_n.
  \end{equation*}
  Therefore, for large $n$, 
  \begin{eqnarray*}
    \ln\lambda+8\frac{\ln (s q_{n-n_0}/q_{n-n_0+1})}{q_{n-n_0}} &\geq& \ln\lambda+8\frac{\ln q_{n}^{t_2}}{q_{n-n_0}}-8\frac{\ln q_{n-n_0+1}}{q_{n-n_0}}-\varepsilon \\
     &\geq& \ln\lambda+8\frac{\ln q_{n-n_0+1}^{t_2}}{q_{n-n_0}}-8\frac{\ln q_{n-n_0+1}}{q_{n-n_0}}-\varepsilon \\
     &\geq& \ln \lambda-8(1-t_2)\beta-\varepsilon\\
      &\geq&  \ln\lambda/9-\varepsilon>0.
  \end{eqnarray*}
  \item Although Theorem \ref{Th.Nonresonant} has been stated in \cite{jl1}  under the assumption that $\lambda>e^{\beta(\alpha)}$, only  the fact that $\ln\lambda+8\ln (s q_{n-n_0}/q_{n-n_0+1})/q_{n-n_0}>0$ is used in the proof.
\end{itemize}
\end{remark}

\section{  Partial localization}\label{SPartiallocalization}
The key to our proof is that, even in the regime of singular continuous spectrum all generalized eigenfunctions decay exponentially at $k$ roughly in the range $q_n^{t_1}<k<q_n^{t_2},$ for sufficiently large $n,$ where  $t_1,t_2$ is as in \eqref{glast9}.
The worst points for decay estimates are the resonant ones. We have
\begin{theorem}\label{Th.resonant}
Suppose  $2q_n^2q_{n+1}^{t_1}<|k|<b_{n+1}$  is $n$-resonant. Let $\phi$ be a generalized eigenfunction. Then for large enough $n$,
\begin{equation}\label{G.Iteration1}
   |\phi(k)|\leq \exp\{-(\ln \lambda-(1-t_1)\beta-\varepsilon)|k|\}.
\end{equation}
\end{theorem}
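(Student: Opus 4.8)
The plan is to propagate the decay estimate outward from the non-resonant region, where Theorem \ref{Th.Nonresonant} already gives exponential smallness of the Green's function, into the resonant region around multiples of $q_n$, using the block-resolvent expansion \eqref{Block} together with the polynomial growth \eqref{gen} and the a-priori transfer-matrix bound \eqref{g500}. Fix an $n$-resonant $k$ with $2q_n^2 q_{n+1}^{t_1}<|k|<b_{n+1}$, so $k$ is within $b_n$ of some $\ell q_n$. The key point is that although $k$ itself sits near a resonance, the nearby scale is huge: $|k|$ is comparable to $\ell q_n$ with $\ell$ at least of order $q_n q_{n+1}^{t_1}$, so one can find an interval centered at (or surrounding) $k$ whose endpoints are $n$-nonresonant, or $(n+1)$-nonresonant, of length a suitable power of $|k|$, and apply Theorem \ref{Th.Nonresonant} at those endpoints. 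Concretely, I would build a covering argument: choose an interval $J=[x_1,x_2]$ of length $\asymp |k|$ with $x_1,x_2$ nonresonant so that $|G_J(x_i,\cdot)|$ decays like $e^{-(\ln\lambda-\varepsilon)|\cdot-x_i|}$ by Theorem \ref{Th.Nonresonant}; then \eqref{Block} gives
\begin{equation*}
|\phi(k)|\le |G_J(x_1,k)|\,|\phi(x_1-1)|+|G_J(k,x_2)|\,|\phi(x_2+1)|.
\end{equation*}
Since $|\phi(x_i\pm1)|$ is only polynomially large by \eqref{gen} and $|k-x_i|$ is a definite fraction of $|k|$, this already yields decay $e^{-(\ln\lambda-\varepsilon)c|k|}$ for some fraction $c<1$ — but not yet the sharp rate $\ln\lambda-(1-t_1)\beta$.

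To get the stated rate, I would iterate: the loss in the exponent comes precisely from the length of the resonant gap near $\ell q_n$, which is controlled by $b_n\asymp q_n^{t_2}$ and, more importantly, by how far $|k|$ must be pushed to reach a scale where Theorem \ref{Th.Nonresonant} applies with a length comparable to $|k|$ rather than to $q_n$. The exponent $(1-t_1)\beta$ should emerge from the bound $q_{n+1}\le e^{(\beta+\varepsilon)q_n}$: a resonant point near $\ell q_n$ can be reached from a nonresonant point at distance at most $\sim q_n$ (one period away from the resonance lattice at the relevant scale), and along that stretch we pay the a-priori bound $e^{(\ln\lambda+\varepsilon)q_n}$ from \eqref{g500} rather than decay, while $|k|\ge 2q_n^2q_{n+1}^{t_1}\ge q_n\cdot\tfrac{|k|}{q_n}$ with $\tfrac{|k|}{q_n}$ large forces the ratio of "bad length" to $|k|$ to be at most $\sim q_n/|k|$, which combined with $|k|\gtrsim q_n q_{n+1}^{t_1}\gtrsim q_n e^{t_1\beta q_n}$ translates the $q_n$-length loss into a multiplicative factor costing at most $(1-t_1)\beta$ per unit length. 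So the scheme is: (a) locate the largest nonresonant-bordered block inside a window around $k$; (b) apply Theorem \ref{Th.Nonresonant} to control $G_J$ on that block; (c) cover the remaining short resonant stretches (total length $O(q_n)$) using \eqref{g500}; (d) combine via \eqref{Block} and \eqref{Cramer1}–\eqref{Cramer2}, optimize over the choice of block, and absorb the polynomial factor from \eqref{gen} and all $\varepsilon$'s.

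The main obstacle I anticipate is step (a)–(c): arranging that the telescoping of block expansions actually reaches $k$ with the endpoints always landing in the nonresonant regime covered by Theorem \ref{Th.Nonresonant}, and bookkeeping the cumulative exponential loss so that it is genuinely bounded by $(1-t_1)\beta|k|$ and not something larger. This requires a careful choice of the intermediate scales $q_{n-n_0}$ in Theorem \ref{Th.Nonresonant} (so that the regularity rate there stays $\ge \ln\lambda/9-\varepsilon>0$, per the Remark following that theorem) and a delicate use of the two-sided constraint $2q_n^2q_{n+1}^{t_1}<|k|<b_{n+1}$: the lower bound is exactly what guarantees enough room for the nonresonant blocks, and the upper bound $|k|<b_{n+1}=q_{n+1}^{t_2}$ is what lets Theorem \ref{Th.Nonresonant}(i) apply at scale $n+1$. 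A secondary technical point is ensuring that when $k$ is an $n$-resonance near $\ell q_n$ with $\ell\ge 2$, the relevant nonresonant points at distance $\asymp|k|$ are also nonresonant for the scales $n-1,\ldots$ as needed — this is a standard but fiddly consequence of the continued-fraction structure and the definition of $b_n$, and I would handle it by a direct counting argument on the positions of multiples of $q_n$ within the window.
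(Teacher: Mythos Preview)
Your plan has a genuine gap at step (b). Theorem \ref{Th.Nonresonant} does not say what you use it for: it asserts that a nonresonant point $y$ is $(t,4sq_{n-n_0}-1)$-regular, i.e.\ it produces an interval of length $4sq_{n-n_0}-1\le {\rm dist}(y,q_n\Z)\le q_n/2$ around $y$ with $|G_I(y,x_i)|$ small. It gives no control on $|G_J(x_i,k)|$ for a long interval $J$ of length $\asymp|k|$ that contains the resonant $k$ and many other resonant points. For such $J$ the denominator $P_{|J|}(\theta+x_1\alpha)$ in \eqref{Cramer1}--\eqref{Cramer2} can be small, so the decay you assume simply need not hold. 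Your heuristic for the exponent is also off: a bad stretch of length $O(q_n)$ over total length $|k|\gg q_n$ would cost $o(1)$ per unit length, not $(1-t_1)\beta$.

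The paper's argument is structurally different and supplies exactly the missing ingredient. One works with blocks of length $2q_n$ (not $|k|$), and the key input is an arithmetic uniformity lemma (Lemma \ref{lejuly2}, from \cite{jl1}) comparing the phase set on an interval $I_1$ near the origin with one $I_2$ near $jq_n$; it shows $\{\theta_m\}_{m\in I_1\cup I_2}$ is $((1-t_1)\beta/2+\varepsilon)$-uniform, so by Lemma \ref{Le.Uniform} one of the two blocks has $|P_{2q_n-1}|\ge e^{(2q_n-1)(\ln\lambda-(1-t_1)\beta/2-\varepsilon)}$. The normalization $\phi(0)^2+\phi(1)^2=1$ rules out $I_1$, forcing the good block near $jq_n$; \eqref{Block} then gives the recursive contraction $r_j\le \max\{r_{j\pm1}\}\,e^{-(\ln\lambda-(1-t_1)\beta-\varepsilon)q_n}$ (Lemma \ref{Le.r_j}), and iterating $\sim\ell$ times yields \eqref{G.Iteration1}. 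The constant $(1-t_1)\beta$ is the uniformity loss in this comparison, not a covering-length ratio; any correct proof must produce a lower bound on $|P_{2q_n-1}|$ near $jq_n$, and that is precisely what your scheme lacks.
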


\begin{remark}
By the definition of $t_1$, $\ln \lambda-(1-t_1)\beta-\varepsilon>0$ if $\varepsilon$ is small.

\end{remark}

Suppose $H\varphi=E\varphi$.
Let $r_j^{\varphi}= \sup_{|r|\leq 10\varepsilon }|\varphi(jq_n+rq_n)|$.
For simplicity, let  $r_j= r_j^{\phi}$, where $\phi$ is the generalized eigenfunction.
Note that,    
we always  assume $\varepsilon>0$ is small enough and $n$ is large enough (may depend on $E,\sigma,\varepsilon$) below.

 \begin{lemma}\label{Le.resonant1}
Let  $k\in [jq_n,(j+1)q_n]$  and   $d= {\rm dist}(k,q_n \mathbb{Z})\geq     10\varepsilon q_n$.

  Suppose  either

i)  $|j|\leq \frac{100b_{n+1}}{q_n}$ and $b_{n+1}\geq \frac{q_n}{2}$,

  or

  ii) $j=0$,
  
  \par
then
\begin{equation}\label{Intervalk}
    |\varphi(k)|\leq \exp\{-(\ln \lambda- 2\varepsilon)(d-3\varepsilon q_n)\} \max\{ r_j^{\varphi},r_{j+1}^{\varphi}\}.
\end{equation}
\end{lemma}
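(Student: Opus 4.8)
\textbf{Proof proposal for Lemma \ref{Le.resonant1}.}

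The plan is to run a telescoping/iteration scheme on the window $[jq_n,(j+1)q_n]$, using the block-resolvent expansion \eqref{Block} together with the Cramer's-rule bounds \eqref{Cramer1}--\eqref{Cramer2} and the numerator estimate \eqref{Numerator}. First I would fix the point $k$ with $d=\mathrm{dist}(k,q_n\mathbb Z)\ge 10\varepsilon q_n$, and choose an interval $J=[x_1,x_2]\subset\mathbb Z$ around $k$ whose endpoints are near the lattice sites $jq_n$ and $(j+1)q_n$ but pulled in by a controlled amount (of order $\varepsilon q_n$), so that $|k-x_i|$ is comparable to the distance from $k$ to the nearer of $jq_n,(j+1)q_n$, and so that $x_1,x_2$ themselves land within $10\varepsilon q_n$ of a lattice point $\ell q_n$. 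Applying \eqref{Block} on $J$ gives $|\varphi(k)|\le |G_J(x_1,k)|\,|\varphi(x_1-1)|+|G_J(k,x_2)|\,|\varphi(x_2+1)|$; the values $|\varphi(x_1-1)|,|\varphi(x_2+1)|$ are then bounded by $r_j^\varphi$ or $r_{j+1}^\varphi$ respectively by the choice of endpoints, up to moving by one step via \eqref{g500} at negligible cost.

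The core estimate is then to bound $|G_J(x_1,k)|$ and $|G_J(k,x_2)|$. By Cramer's rule these are ratios $|P_{\ast}(\cdot)/P_{|J|}(\theta+x_1\alpha)|$; the numerator is bounded above by $e^{(\ln\lambda+\varepsilon)\cdot(\text{length})}$ using \eqref{Numerator}, so the whole game is a \emph{lower} bound on the denominator $|P_{|J|}(\theta+x_1\alpha)|$. This is where I expect the main obstacle to lie, and it is precisely the point where the two hypotheses (i) and (ii) enter. The determinant $P_{|J|}$ is (up to the affine change of variable in the definition of $A_{k,r}$) controlled by whether $\theta+x_1\alpha$ avoids the ``bad set'' $A_{k,\ln\lambda-\epsilon}$ of Lemma \ref{Le.Uniform}; to guarantee avoidance one needs a uniformity statement (\eqref{Def.Uniform}) for an appropriate collection of translated phases, which is available through Theorem \ref{Th.Nonresonant}-type arguments only when the block sits in the right scale range. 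Hypothesis (ii) $j=0$ is the base case where $k$ and the endpoints are genuinely $n$-nonresonant or close to $0$, so the nonresonant regularity from Theorem \ref{Th.Nonresonant}(ii) applies directly. Hypothesis (i), $|j|\le 100 b_{n+1}/q_n$ with $b_{n+1}\ge q_n/2$, is the band of resonances close enough to the origin that the same uniformity input (applied with $n$ replaced by a smaller index, using the Diophantine property of $\theta$ to separate the phases $\theta+x_1\alpha+\ell q_{n'}\alpha$) still produces the needed lower bound on $P_{|J|}$; the constraint $b_{n+1}\ge q_n/2$ is what makes the window $[jq_n,(j+1)q_n]$ large enough (relative to $q_n$) for the covering argument to have enough room, and $|j|\le 100 b_{n+1}/q_n$ keeps $x_1\alpha$ from drifting out of the controllable range.

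Putting the pieces together: on $J$, for \emph{either} endpoint, one gets $|G_J(x_1,k)|\le e^{(\ln\lambda+\varepsilon)(|k-x_1|- c\varepsilon q_n)-(\ln\lambda-\epsilon)|J|}$ and similarly for $x_2$; choosing $|J|$ just slightly larger than $|k-x_1|+|k-x_2|$ and bookkeeping the $\varepsilon q_n$ losses from the endpoint adjustment, the exponent collapses to $-(\ln\lambda-2\varepsilon)(d-3\varepsilon q_n)$, which is exactly the claimed bound after factoring out $\max\{r_j^\varphi,r_{j+1}^\varphi\}$. The routine parts are: (a) verifying the arithmetic of the endpoint shifts so that $|k-x_i|\ge d-(\text{small})$ and $x_i$ are near lattice points; (b) the one-step transfer \eqref{g500} to pass from $\varphi(x_i\mp\!1\!\pm\!1)$ to a value counted in $r_j^\varphi$; and (c) collecting the $\varepsilon$'s. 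The single substantive step is the denominator lower bound, i.e.\ showing $\theta+x_1\alpha\notin A_{|J|,\ln\lambda-\epsilon}$ in both cases (i) and (ii), via the uniformity lemma and the Diophantine condition on $\theta$.
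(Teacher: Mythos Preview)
Your ingredients—block expansion \eqref{Block}, Cramer \eqref{Cramer1}--\eqref{Cramer2}, numerator bound \eqref{Numerator}—are the right ones, but the architecture you propose is not the paper's, and your version has a real gap at exactly the step you flag as ``substantive.''

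The paper does \emph{not} fix a single interval $J$ of length $\approx q_n$ with endpoints placed a priori near $jq_n$ and $(j+1)q_n$ and then try to bound $|P_{|J|}(\theta+x_1\alpha)|$ from below at that specific $x_1$. Instead it invokes Theorem~\ref{Th.Nonresonant} and \emph{iterates}. Since $d\ge 10\varepsilon q_n>b_n=q_n^{t_2}$ for large $n$, the point $k$ is $n$-nonresonant, so Theorem~\ref{Th.Nonresonant} hands you an interval of length $4sq_{n-n_0}-1\approx d$ (not $q_n$) on which $k$ is regular with rate essentially $\ln\lambda-\varepsilon$. Apply \eqref{Block}; the new boundary points $x_1-1,x_2+1$ remain inside $[jq_n,(j+1)q_n]$, and if either is still at distance $\ge 10\varepsilon q_n$ from $q_n\mathbb{Z}$ it is again nonresonant, so repeat. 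The telescoped exponentials accumulate to $e^{-(\ln\lambda-2\varepsilon)(d-3\varepsilon q_n)}$, and the process terminates at points within $10\varepsilon q_n$ of $jq_n$ or $(j+1)q_n$, which are exactly the points counted by $r_j^\varphi,r_{j+1}^\varphi$. Hypotheses (i) and (ii) of the lemma are there simply to place $k$ (and its iterates) in the range hypotheses (i) and (ii) of Theorem~\ref{Th.Nonresonant}; your reading of them (uniformity ``with $n$ replaced by a smaller index,'' $b_{n+1}\ge q_n/2$ as a ``covering room'' condition) is not how they function.

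The gap in your one-shot plan is that Lemma~\ref{Le.Uniform} only tells you that \emph{some} phase in a uniform family avoids $A_{|J|,\ln\lambda-\epsilon}$; it gives no control over whether your \emph{pre-chosen} $\theta+x_1\alpha$ does. Where the paper ever pins down a specific good interval (cf.\ the proof of Lemma~\ref{Le.r_j}) it does so by pairing the candidate block with a companion block around $0$ and using the normalization of the \emph{generalized eigenfunction} to eliminate the other alternative—but that whole mechanism is already packaged inside Theorem~\ref{Th.Nonresonant}, whose output is a Green's-function statement valid for every solution $\varphi$. So the fix is not to manufacture a new denominator argument for a length-$q_n$ block with frozen endpoints; it is to let Theorem~\ref{Th.Nonresonant} choose the intervals for you and iterate.
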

\begin{proof}
The Lemma can be proved directly from   Theorem \ref{Th.Nonresonant}, see \cite[Lemma 4.1]{jl1} for details.
\end{proof}


By the definition of resonance and assumption of Theorem \ref{Th.resonant}, there exist $\ell\neq 0$ and $r$ such that $k=\ell q_n+r$ with $|r|\leq b_n$ and $q_n q_{n+1}^{t_1} \leq|\ell|\leq 2\frac{b_{n+1}}{q_n}$.
We may assume $\ell>0$ for simplicity.
For $[\frac{\ell}{q_n}]  \leq   |j| \leq 2\ell $,
set $I_1, I_2\subset \mathbb{Z}$ as follows
\begin{eqnarray*}
  I_1 &=& [-[\frac{1}{2}q_n], q_n-[\frac{1}{2}q_n]-1], \\
   I_2 &=& [ j  q_n-[\frac{1}{2}q_n], (j +1)q_n-[\frac{1}{2}q_n]-1 ].
\end{eqnarray*}
Let $\theta_m=\theta+m\alpha$ for $m\in I_1\cup I_2$. The set $\{\theta_m\}_{m\in I_1\cup I_2}$
consists of $2q_n$ elements.
\begin{lemma}\cite[Theorem B.5]{jl1}\label{lejuly2}
For any $\varepsilon>0$, $\{\theta_m\}_{m\in I_1\cup I_2}$ is $(1-t_1)\frac{\beta}{2}+ \varepsilon $ uniform.
\end{lemma}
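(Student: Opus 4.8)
The statement to prove is Lemma~\ref{lejuly2}: the set $\{\theta_m\}_{m\in I_1\cup I_2}$ consisting of $2q_n$ points $\theta+m\alpha$, with $m$ ranging over $I_1=[-[q_n/2],q_n-[q_n/2]-1]$ and $I_2=[jq_n-[q_n/2],(j+1)q_n-[q_n/2]-1]$ (for $[\ell/q_n]\le|j|\le 2\ell$), is $\big((1-t_1)\tfrac{\beta}{2}+\varepsilon\big)$-uniform in the sense of \eqref{Def.Uniform}. Since this is quoted as \cite[Theorem B.5]{jl1}, the plan is to reproduce the standard argument behind such uniformity estimates.

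\medskip

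The plan is as follows. First I would recall the combinatorial identity that reduces the uniformity quantity $\prod_{p\neq i}\frac{|x-\cos2\pi\theta_p|}{|\cos2\pi\theta_i-\cos2\pi\theta_p|}$ to a ratio of two quantities: the numerator, which is bounded using $|x-\cos2\pi\theta_p|\le 2$ and hence is at most $4^{q_n}$ or so (contributing only to the $\varepsilon$ term), and the denominator $\prod_{p\neq i}|\cos2\pi\theta_i-\cos2\pi\theta_p|$, which must be bounded \emph{below}. Using $|\cos2\pi a-\cos2\pi b| = 2|\sin\pi(a-b)|\,|\sin\pi(a+b)|$ and the standard distortion estimate $\|t\|_{\R/\Z}\le |\sin\pi t|\le\pi\|t\|_{\R/\Z}$, this reduces to lower bounds on products of $\|\theta_i-\theta_p\|_{\R/\Z}=\|(m_i-m_p)\alpha\|_{\R/\Z}$ and $\|\theta_i+\theta_p\|_{\R/\Z}=\|2\theta+(m_i+m_p)\alpha\|_{\R/\Z}$. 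The second factor is controlled by the Diophantine condition \eqref{DCtheta} on $\theta$: each factor $\|2\theta+(m_i+m_p)\alpha\|\ge\kappa/|m_i+m_p|^\nu$, and since all indices are $O(\ell q_n)=O(b_{n+1})$, the product of these over the $2q_n$ terms contributes at most a polynomially-in-$q_n$ small factor, i.e. $e^{o(q_n)}$, absorbed into $\varepsilon$.

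\medskip

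The heart of the matter is the product $\prod_{p\neq i}\|(m_i-m_p)\alpha\|_{\R/\Z}$. Here I would use the key number-theoretic fact about continued fraction denominators: for $1\le |k|<q_{n+1}$ one has $\|k\alpha\|_{\R/\Z}\ge \|q_n\alpha\|_{\R/\Z}\ge \frac{1}{2q_{n+1}}$, and more precisely $\|k\alpha\|$ is comparable to $\|(k\bmod q_n)'\|$-type estimates. The differences $m_i-m_p$, with $m_i,m_p$ ranging over $I_1\cup I_2$: within a single block $I_1$ or $I_2$ the differences run through $\{\pm1,\dots,\pm(q_n-1)\}$ (each at most twice), and since these are all strictly less than $q_n$ in absolute value, the three-distance theorem / the fact that $\{k\alpha\bmod 1: 0<k<q_n\}$ is roughly equidistributed with spacing $\ge\|q_{n-1}\alpha\|\ge\frac{1}{2q_n}$ gives $\prod_{0<|k|<q_n}\|k\alpha\|\ge (q_n!)^{-1}\cdot(\text{const})^{q_n}\cdot(\tfrac{1}{2q_n})^{\#}$-style bounds, yielding a factor $e^{-O(q_n\ln q_n)/q_n\cdot q_n}$... — more carefully, the standard estimate is $\ln\prod_{0<|k|<q_n}\|k\alpha\|\ge -Cq_n\ln q_n$ which divided by the relevant exponent is still $o(1)$ — wait, that would not give the $(1-t_1)\beta/2$ term. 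The $(1-t_1)\beta/2$ must come from the \emph{cross} differences $m_i-m_p$ with $m_i\in I_1$, $m_p\in I_2$: these are of the form $\pm jq_n + (\text{something of size} <q_n)$, i.e. of the form $jq_n+r$ with $|r|<q_n$, and $\|(jq_n+r)\alpha\|$ can be as small as $|j|\cdot\|q_n\alpha\|\lesssim \frac{2\ell}{q_n}\cdot\frac{1}{q_{n+1}}$. Since $\|q_n\alpha\|\asymp 1/q_{n+1}$ and $\ln q_{n+1}\le(\beta+\varepsilon)q_n$, and $|j|\le 2\ell\le 4b_{n+1}/q_n = 4q_{n+1}^{t_1}/q_n$, each such cross-factor is $\ge c\,q_{n+1}^{t_1-1}$ roughly, and there are $\asymp q_n$ of them (for each $m_i\in I_1$, the set of $m_p\in I_2$ giving the dangerous small values is $O(1)$ of them... actually up to $q_n$ of them) — taking logs, $\frac{1}{2q_n}\sum\ln\|\cdot\| \gtrsim -\frac{1}{2}(1-t_1)\ln q_{n+1}/q_n\cdot\frac{(\text{count})}{q_n}\gtrsim -(1-t_1)\beta/2$. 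I would set this up carefully: for each $i$, split $\{p\}$ according to whether $m_i-m_p$ is $n$-resonant (within $O(q_n)$ of a multiple of $q_n$) or not; the non-resonant ones give $\|(m_i-m_p)\alpha\|\ge c/q_n$ contributing $O(\ln q_n)$ total per $i$, and the $O(q_n)$ resonant ones each contribute $\ge\ln(c\,q_{n+1}^{t_1-1})$, summing to $\approx -(1-t_1)q_n\ln q_{n+1}$, and after dividing by $2q_n$ (the total number of points, which appears in the exponent $e^{\#\cdot\epsilon}$... no — the exponent is $2q_n$, which is the cardinality; we need the whole product bounded by $e^{2q_n\cdot((1-t_1)\beta/2+\varepsilon)}$) the bound $(1-t_1)\beta/2+\varepsilon$ emerges since $\ln q_{n+1}\le(\beta+\varepsilon)q_n$.

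\medskip

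So the structure of the proof I would write: (1) algebraic reduction of \eqref{Def.Uniform} to bounding $\prod_{p\neq i}|\cos2\pi\theta_i-\cos2\pi\theta_p|$ from below, for the worst index $i$; (2) factor via $\cos a-\cos b = -2\sin\frac{a+b}{2}\sin\frac{a-b}{2}$, reducing to the two products over $\|(m_i\pm m_p)\alpha + \text{shift}\|_{\R/\Z}$; (3) handle the $+$ product using the Diophantine property of $\theta$, getting $e^{o(q_n)}$; (4) handle the $-$ product by splitting indices into $n$-resonant and $n$-nonresonant relative to $m_i$, using $\|k\alpha\|\ge\frac{1}{2q_n}$ for $0<|k|<q_n$ and the near-multiples-of-$q_n$ estimate $\|(aq_n+b)\alpha\|\ge \|aq_n\alpha\| - \|b\alpha\|$ with careful bookkeeping, and bounding $|j|,|\ell|$ by the range hypotheses; (5) collect: $\ln q_{n+1}\le(\beta+\varepsilon/2)q_n$ for large $n$ by definition of $\beta$, turning the $-(1-t_1)q_n\ln q_{n+1}$ into $-(1-t_1)\beta q_n^2/(\dots)$, and conclude the total is $\le e^{2q_n((1-t_1)\beta/2+\varepsilon)}$. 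The main obstacle is step (4): the careful combinatorial accounting of how many index pairs $(i,p)$ produce "dangerously small" values of $\|(m_i-m_p)\alpha\|$ and the precise lower bound in each case — this is where the factor $(1-t_1)\beta/2$ (rather than, say, $(1-t_1)\beta$ or something with a different constant) is pinned down, and getting the constant exactly right requires matching the structure of the two blocks $I_1,I_2$ of length $q_n$ each against the arithmetic of the $q_n$-th convergent. Since the result is quoted verbatim from \cite{jl1}, in practice I would present the reduction and then refer to \cite[Theorem B.5]{jl1} for the detailed cross-difference estimate, noting only that nothing beyond $1<\lambda\le e^\beta$, the Diophantine condition on $\theta$, and $[\ell/q_n]\le|j|\le2\ell$ with $\ell\le 2b_{n+1}/q_n$ is used.
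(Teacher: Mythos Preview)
The paper does not supply its own proof of this lemma: it is quoted verbatim as \cite[Theorem B.5]{jl1} and used as a black box. Your final plan---present the reduction and then refer to \cite{jl1}---therefore matches the paper exactly.

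That said, two steps in your sketch would not go through as written, and would matter if you tried to flesh out a self-contained proof. First, the trivial bound $|x-\cos 2\pi\theta_p|\le 2$ on the numerator is not enough: it leaves a residual $e^{(2q_n-1)\ln 2}$, and after dividing by $2q_n-1$ this contributes $\ln 2$, not $\varepsilon$. The numerator must be estimated via the same $\cos a-\cos b=-2\sin\tfrac{a+b}{2}\sin\tfrac{a-b}{2}$ decomposition and the equidistribution lemma (as in \cite[Lemma~9.7]{avila2009ten}), so that the bulk $-\ln 2$ contributions from numerator and denominator cancel in the ratio; only then does the remaining correction reduce to the single small cross term. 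Second, bounding the $+$ product by $\prod_{p}\kappa/|m_i+m_p|^\nu$ gives a lower bound of order $e^{-C q_n\ln q_{n+1}}=e^{-O(\beta q_n^2)}$, not $e^{o(q_n)}$. The Diophantine condition on $\theta$ is used only for the single ``closest'' term in each $q_n$-block; the remaining $q_n-1$ terms per block are handled by equidistribution, contributing $-(q_n-1)\ln 2+O(\ln q_n)$.

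Your identification of the source of the $(1-t_1)\beta/2$ is essentially right: for fixed $i$ there is exactly \emph{one} cross-block difference equal to $\pm jq_n$, with $\|jq_n\alpha\|=|j|\,\|q_n\alpha\|\ge q_{n+1}^{t_1-1}/2$ thanks to the lower bound $|j|\ge[\ell/q_n]\ge q_{n+1}^{t_1}$; its log $\ge -(1-t_1)\ln q_{n+1}\ge -(1-t_1)(\beta+\varepsilon)q_n$, divided by $k=2q_n-1$, gives the claimed exponent.
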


\begin{lemma}\label{Le.r_j}
For $[\frac{\ell}{q_n}]  \leq   |j| \leq 2\ell $, the following holds
\begin{equation}\label{r_j}
   r_j\leq  \max\{ r_{j\pm1}\exp\{-(\ln \lambda-(1-t_1)\beta- \varepsilon)q_n\} \}.
\end{equation}
\end{lemma}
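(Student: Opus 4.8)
The plan is to apply Lemma \ref{lejuly2} together with Lemma \ref{Le.Uniform} to locate a "good" site where $|P_{2q_n}|$ is not too small, and then use the Cramer's rule bounds \eqref{Cramer1}--\eqref{Cramer2} and the block formula \eqref{Block} to transfer this into an exponential decay step for $r_j$. Concretely, fix $j$ with $[\frac{\ell}{q_n}] \le |j| \le 2\ell$ and consider the set $\{\theta_m\}_{m\in I_1\cup I_2}$ of $2q_n$ points, which by Lemma \ref{lejuly2} is $\big((1-t_1)\frac{\beta}{2}+\varepsilon\big)$-uniform. Choosing the auxiliary parameter between $(1-t_1)\frac{\beta}{2}+\varepsilon$ and $\ln\lambda$ (which is possible since $(1-t_1)\beta = \beta - (\beta-\ln\lambda) - \sigma\beta < \ln\lambda$ for small $\sigma$, so indeed $(1-t_1)\frac{\beta}{2} < \ln\lambda$), Lemma \ref{Le.Uniform} produces an index $m^\ast \in I_1\cup I_2$ with $\theta_{m^\ast}\notin A_{2q_n,\ln\lambda-\varepsilon'}$, i.e. the determinant $P_{2q_n}$ based at the appropriately shifted phase is at least $e^{(2q_n+1)(\ln\lambda-\varepsilon')}$ in absolute value.

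The next step is to turn this large-denominator statement into Green's function decay on an interval of length $2q_n$. The two blocks $I_1$ (near the origin) and $I_2$ (near $jq_n$) are chosen so that the "good" box of length $\approx 2q_n$ either sits around $0$ or around $jq_n$; in the latter case we get that the center of the block $jq_n - [\frac12 q_n]$, which is within $\le 10\varepsilon q_n$ of a point of $q_n\Z$ only trivially, is $(c,2q_n-1)$-regular for some $c$ close to $\ln\lambda$. Combined with \eqref{Numerator} bounding the numerators $P_{x_2-y}$, $P_{y-x_1}$ by $e^{(\ln\lambda+\varepsilon)|\cdot|}$, Cramer's rule \eqref{Cramer1}--\eqref{Cramer2} gives $|G_{[x_1,x_2]}(y,x_i)| \le e^{-(\ln\lambda - (1-t_1)\beta/2 - \varepsilon)|y-x_i|}$ at the relevant endpoints. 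Plugging this into the block expansion \eqref{Block} for $\varphi=\phi$ at $x = $ (a point near $jq_n$) expresses $\phi$ there in terms of its values at the two endpoints $x_1-1$ and $x_2+1$, each of which lies within $O(\varepsilon q_n)$ of an adjacent site $j'q_n$ with $j' = j\pm 1$ (or $j'=j$), hence is bounded by $r_{j\pm 1}$ up to the mild transfer-matrix growth \eqref{g500} over a distance $O(\varepsilon q_n)$, which costs only $e^{O(\varepsilon q_n)}$ and is absorbed into the $\varepsilon$ in the exponent. This yields $r_j \le \max\{r_{j-1},r_{j+1}\}\,e^{-(\ln\lambda - (1-t_1)\beta - \varepsilon)q_n}$, as claimed (the factor passing from $\beta/2$ to $\beta$ and the relaxed exponent provide the slack needed to absorb the $[\frac12 q_n]$-shifts and the definition of $r_j$ as a sup over $|r|\le 10\varepsilon$).

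The subtle point, and the one I expect to be the main obstacle, is the bookkeeping that ensures the "good" box found by Lemma \ref{Le.Uniform} is actually usable to estimate $\phi(k)$ for the resonant $k = \ell q_n + r$: one must check that $m^\ast$ can be taken in $I_2$ (the block near $jq_n$) rather than being forced into $I_1$ — or, more precisely, run the argument for every $j$ in the range and note that whichever block the good point falls in, it controls one of $r_j$, $r_{j\pm 1}$ in terms of the other, and then chain these one-step inequalities. A related delicate issue is that the regularity we extract is only at distance $\ge \frac15(2q_n)$ from the endpoints, so the point $x$ we feed into \eqref{Block} must be chosen at a controlled offset (roughly the center of the block), and the residual distance from $x$ to $jq_n$ — at most $[\frac12 q_n]$ — together with the $10\varepsilon q_n$ allowance in the definition of $r_j$ must be handled via \eqref{g500}; keeping all these $O(\varepsilon q_n)$ and $O(q_n)$-with-small-prefactor errors inside the stated exponent $\ln\lambda - (1-t_1)\beta - \varepsilon$ is where the care is required. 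Everything else — the uniformity input, Cramer's rule, and the numerator bound \eqref{Numerator} — is off-the-shelf from \cite{jl1} and \cite{avila2009ten}.
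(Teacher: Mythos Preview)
Your overall strategy---uniformity of $\{\theta_m\}_{m\in I_1\cup I_2}$ via Lemma~\ref{lejuly2}, then Lemma~\ref{Le.Uniform} to locate a good site, then Cramer's rule plus the block formula~\eqref{Block}---matches the paper. But there is a genuine gap in the second paragraph.

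You assert that the endpoints $x_1-1$ and $x_2+1$ of the good interval ``lie within $O(\varepsilon q_n)$ of an adjacent site $j'q_n$'', and then invoke only the trivial bound $|\phi(x_i')|\le r_{j'}$ together with~\eqref{g500}. This is false: if $j_0\in I_2$ then $I=[j_0-q_n+1,\,j_0+q_n-1]$ with $j_0$ ranging over all of $I_2$, so $x_1'=j_0-q_n$ lies anywhere in $[(j-1)q_n-[\tfrac12 q_n],\, jq_n-[\tfrac12 q_n]-1]$; its distance to $q_n\Z$ can be as large as $q_n/2$, not $O(\varepsilon q_n)$. Consequently you cannot simply absorb $|\phi(x_i')|$ into some $r_{j'}$. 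The missing ingredient is Lemma~\ref{Le.resonant1}: when $\mathrm{dist}(x_i',q_n\Z)\ge 10\varepsilon q_n$ one applies~\eqref{Intervalk} to gain an \emph{additional} factor $e^{-(\ln\lambda-2\varepsilon)(d-3\varepsilon q_n)}$ before landing on an $r_{j'}$. Only after combining this nonresonant decay with the Green's-function factor $e^{((1-t_1)\beta+\varepsilon)q_n - |jq_n+r-x_i|\ln\lambda}$, and using the distance identities $|jq_n+r-x_i|+d_{j}^i,\, |jq_n+r-x_i|+d_{j\pm1}^i\ge q_n-|r|$ (and the $2q_n$ variant for $j\pm2$), do you obtain the stated exponent $\ln\lambda-(1-t_1)\beta-\varepsilon$. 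This also produces $r_{j\pm2}$ terms, which are then reduced to $r_{j\pm1}$ via~\eqref{g500}; your sketch omits this entire layer.

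On the $I_1$ vs.\ $I_2$ issue: your proposed resolution (``whichever block the good point falls in, it controls one of $r_j$, $r_{j\pm1}$'') does not work, because $I_1$ is fixed around the origin independently of $j$, so $j_0\in I_1$ yields a bound on $r_0$, not on any $r_j$ in the range $[\ell/q_n]\le|j|\le 2\ell$. The paper's argument is that $j_0\in I_1$ would force $|\phi(0)|,|\phi(1)|\le e^{-(\ln\lambda-(1-t_1)\beta-\varepsilon)q_n}$ by the same mechanism (together with~\eqref{gen} to bound the neighboring $r_{\pm1}$), contradicting the normalization $\phi(0)^2+\phi(1)^2=1$; hence $j_0\in I_2$ is forced.
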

\begin{proof}

 By Lemma \ref{lejuly2} and   Lemma  \ref{Le.Uniform}, there exists some $j_0$ with  $j_0\in I_1\cup I_2$
    such that
      $ \theta_{j _0}\notin  A_{2q_n-1,\ln\lambda-(1-t_1)\frac{\beta}{2}- \varepsilon}$.

      Firstly, we assume $j_0\in I_2$.
      \par
      Set $I=[j_0-q_n+1,j_0+q_n-1]=[x_1,x_2]$.  
      Denote by $x_1'=x_1-1$ and $x_2'=x_2+1$.
      By  (\ref{Cramer1}), (\ref{Cramer2}) and  (\ref{Numerator}),
 it is easy to verify
\begin{equation*}
|G_I(jq_n+r,x_i)|\leq e^{(\ln\lambda+\varepsilon )(2q_n-2-|jq_n+r-x_i|)-(2q_n-1)(\ln\lambda -(1-t_1)\frac{\beta}{2}-\varepsilon)}.
\end{equation*}
Using (\ref{Block}), we obtain
\begin{equation}\label{Iterationr_j}
    |\phi(j q_n+r)|  \leq \sum_{i=1,2} e^{((1-t_1)\beta +\varepsilon)q_n}|\phi(x_i')|e^{-|jq_n+r-x_i|\ln \lambda }.
\end{equation}
Let $d_j^i=  |x_i-jq_n|  $, $i=1,2$. It is easy to check that
\begin{equation}\label{G.Distance}
   |jq_n+r-x_i|+d_j^i,|jq_n+r-x_i|+d_{j\pm1}^i \geq q_n-|r|,
\end{equation}
and
\begin{equation}\label{G.Distanceadd}
    |jq_n+r-x_i|+d_{j\pm2}^i \geq 2q_n-|r|.
\end{equation}
If $ {\rm dist}(x_i,q_n\mathbb{Z})\geq     10\varepsilon q_n $, then we replace $\phi(x'_i)$  in  (\ref{Iterationr_j}) with (\ref{Intervalk}).
If   ${\rm dist}(x_i,q_n\mathbb{Z})\leq 10\varepsilon q_n$, then we replace $\phi(x_i')$  in  (\ref{Iterationr_j}) with some proper $r_j$.
Combining with (\ref{G.Distance}) and (\ref{G.Distanceadd}),
we have
\begin{eqnarray*}
   r_j&\leq&\max\{r_{j\pm1}\exp\{-(\ln \lambda-(1-t_1)\beta- \varepsilon)q_n\}   ,r_{j}\exp\{-(\ln \lambda-(1-t_1)\beta- \varepsilon)q_n\}, \\
  && r_{j\pm 2}\exp\{-(2\ln \lambda-(1-t_1)\beta- \varepsilon)q_n\}\}.
\end{eqnarray*}
However $ r_j\leq r_{j}\exp\{-(\ln \lambda-(1-t_1)\beta- \varepsilon)q_n\}   $ can not happen, then we must have
\begin{equation}\label{G.Secondadd1}
   r_j\leq \max\{r_{j\pm1}\exp\{-(\ln \lambda-(1-t_1)\beta- \varepsilon)q_n\}   ,r_{j\pm 2}\exp\{-(2\ln \lambda-(1-t_1)\beta- \varepsilon)q_n\}\}.
\end{equation}

By \eqref{g500}, one has that
\begin{equation}\label{gnov191}
    r_{j+2}\leq e^{(\ln\lambda+\varepsilon) q_n}  r_{j+1}
\end{equation}
and
\begin{equation}\label{gnov192}
    r_{j-2}\leq e^{(\ln\lambda+\varepsilon) q_n}  r_{j-1}.
\end{equation}

By \eqref{G.Secondadd1}, \eqref{gnov191} and \eqref{gnov192}, one has that
\begin{equation}\label{gnov193}
   r_j\leq \max\{r_{j\pm1}\exp\{-(\ln \lambda-(1-t_1)\beta- \varepsilon)q_n\}  \}.
\end{equation}
If  $j_0\in I_1$,  then (\ref{gnov193}) holds for  $j=0$.  Combining with (\ref{gen}),   we  get
 $|\phi(0)|,|\phi(1)|\leq   \exp\{-(\ln \lambda-(1-t_1)\beta-\varepsilon)q_n\}$, this is contradicted to the fact  $\phi(0)^2+\phi(1)^2=1$.
This leads to  $j_0\in I_2$,
  then  Lemma \ref{Le.r_j} follows from (\ref{gnov193}).
\end{proof}

\begin{proof}[\bf Proof of Theorem \ref{Th.resonant}]

Without loss of generality, assume $k>0$.
Starting with $j=\ell$ in  (\ref{r_j}) and iterating  (\ref{r_j}) $\ell-[\frac{\ell}{q_n}]$ times,
we  have that
\begin{equation}\label{G.r_j}
 {r}_{\ell}\leq  (\ell+1) q_n\exp\{-(\ln \lambda-(1-t_1)\beta-\varepsilon) (1-\frac{1}{q_n})\ell q_n\}.
\end{equation}
This implies   Theorem \ref{Th.resonant}.
 \end{proof}
\section{Estimates on all solutions}
Let $\mu_{\delta_0}$ and $\mu_{\delta_1}$ be the spectral measures of, correspondingly, $\delta_0$ and $\delta_1$ for operator $H,$ and let $\mu=\mu_{\delta_0}+\mu_{\delta_1}$.

By basic spectral theory, we have that for any $\varphi\in\ell^2(\Z)$,  there are Borel functions $f_{\varphi}$ and $g_{\varphi}$
such that 
\begin{equation}\label{glast11}
 \mu_{\varphi}=f _{\varphi}\mu_{\delta_0}+g_{\varphi}\mu_{\delta_1}. 
\end{equation}
In particular,  
for any $\varphi\in\ell^2(\Z)$, $\mu_{\varphi}$ is absolutely continuous with respect to $\mu.$

 By   Schnol's theorem,  almost every $E$ with respect to the spectral measure $\mu$   is a generalized eigenvalue.
Furthermore, the following  stronger statement  holds \cite{ls99}.
\begin{theorem}\label{Th.General}
	For almost every $E$ with respect to the
	spectral measure $\mu$, there   exists  a solution  $u$ of equation $Hu=Eu$ such that for all $L\geq 2$,
	\begin{equation}\label{G.genral}
	||u||_{L,L} \leq {C}(E)L^{\frac{1}{2}}\ln L.
	\end{equation}
\end{theorem}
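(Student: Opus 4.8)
The plan is to treat Theorem~\ref{Th.General} as a (slightly sharpened) Schnol-type statement and prove it by combining the matrix-valued eigenfunction expansion of the whole-line operator $H$ with a dyadic Borel--Cantelli argument; this is exactly the content imported from \cite{ls99}, and the following is one route to it. First I would record the spectral setup. Since $\delta_0$ and $\delta_1$ generate $\ell^2(\Z)$ under $H$, the measure $\mu=\mu_{\delta_0}+\mu_{\delta_1}$ is a maximal spectral measure, and the eigenfunction expansion of $H$ is governed by a positive $2\times 2$ matrix-valued measure $\mathbf M$ on $\R$ with $\operatorname{tr}\mathbf M=\mu$, whose associated generalized eigenfunction is the row vector $\Psi(n,E)=(c(n,E),s(n,E))$, where $c(\cdot,E),s(\cdot,E)$ solve $Hu=Eu$ with $c(0)=1$, $c(1)=0$ and $s(0)=0$, $s(1)=1$. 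The Parseval identity applied to $\delta_n$ reads
\[
\int_\R \Psi(n,E)\,d\mathbf M(E)\,\Psi(n,E)^{T}=\|\delta_n\|^2=1\qquad (n\in\Z),
\]
and, since every entry of $\mathbf M$ is dominated by $\mu=\operatorname{tr}\mathbf M$, one may write $d\mathbf M=\mathbf M'(E)\,d\mu(E)$ with $\mathbf M'(E)$ symmetric positive and $\operatorname{tr}\mathbf M'(E)=1$ for $\mu$-a.e.\ $E$.

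Next I would make the right choice of generalized eigenfunction. For $\mu$-a.e.\ $E$, pick (measurably) a unit eigenvector $w(E)\in\R^2$ of $\mathbf M'(E)$ for its largest eigenvalue $\lambda(E)$; note $\lambda(E)\ge\tfrac12$ since $\operatorname{tr}\mathbf M'(E)=1$. Set
\[
u_E(n)=\langle w(E),\Psi(n,E)\rangle=w_1(E)\,c(n,E)+w_2(E)\,s(n,E).
\]
Then $u_E$ is a nontrivial real solution of $Hu=Eu$ on $\Z$ (nontrivial because $w(E)\neq 0$ and $c(\cdot,E),s(\cdot,E)$ are linearly independent). Decomposing $\Psi(n,E)$ in the eigenbasis of $\mathbf M'(E)$ gives $\langle \mathbf M'(E)\Psi(n,E),\Psi(n,E)\rangle\ge\lambda(E)\,|u_E(n)|^2\ge\tfrac12|u_E(n)|^2$, so integrating against $\mu$ and using the Parseval identity yields the $n$-uniform bound
\[
\int_\R |u_E(n)|^2\,d\mu(E)\le 2\qquad (n\in\Z).
\]
I regard this step as the crux: adapting $u_E$ to the top eigendirection of $\mathbf M'(E)$ is what forces the $L^2$-bound to be uniform in $n$; a fixed normalization at $0$ need not do so once the Lyapunov exponent is positive, since $\sup_{E\in\sigma(H)}|c(n,E)|$ can grow exponentially while the direction $\mathbf M'(E)$ "weights" may be nearly orthogonal to it.

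Finally I would run the dyadic Borel--Cantelli step. For $k\ge 1$ let $B_k(E)=\sum_{2^k\le |n|<2^{k+1}}|u_E(n)|^2$; the uniform bound gives $\int B_k\,d\mu\le C2^k$, hence
\[
\int_\R\Bigl(\sum_{k\ge 1}\frac{B_k(E)}{2^k k^2}\Bigr)\,d\mu(E)\le C\sum_{k\ge 1}\frac1{k^2}<\infty,
\]
so $S(E):=\sum_{k\ge 1}B_k(E)/(2^k k^2)$ is finite for $\mu$-a.e.\ $E$, which forces $B_k(E)\le S(E)\,2^k k^2$ for every $k\ge 1$. Since also $\sum_{|n|\le 1}|u_E(n)|^2<\infty$ for $\mu$-a.e.\ $E$, for any $L\ge 2$, choosing $K$ with $2^K\le L<2^{K+1}$, one gets $\sum_{|n|\le L}|u_E(n)|^2\le \sum_{|n|<2^{K+1}}|u_E(n)|^2\le C(E)\,2^K K^2\le C(E)\,L(\ln L)^2$. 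Because $\|u\|_{L,L}^2$ is comparable to $\sum_{|n|\le L+1}|u(n)|^2$, this gives $\|u_E\|_{L,L}\le C(E)\,L^{1/2}\ln L$ for all $L\ge 2$, as claimed. The only genuine work, beyond invoking the standard matrix-valued eigenfunction expansion (with its normalization $\operatorname{tr}\mathbf M'=1$) and a measurable selection of $w(E)$, is the observation in the previous paragraph; everything else is bookkeeping.
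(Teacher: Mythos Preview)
The paper does not give its own proof of Theorem~\ref{Th.General}; it simply cites the result from \cite{ls99}. Your argument is correct and is precisely the Last--Simon proof from that reference (their Section~2, especially Lemma~2.1 and Corollary~2.2): the matrix-valued eigenfunction expansion with $\operatorname{tr}\mathbf M'=1$, the choice of $u_E$ along the top eigenvector of $\mathbf M'(E)$ to get the uniform-in-$n$ bound $\int|u_E(n)|^2\,d\mu\le 2$, and then the dyadic Borel--Cantelli step with weights $k^{-2}$ to upgrade this to the pointwise $L^{1/2}\ln L$ growth.
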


Let $S\subset \mathbb{R}$ be the full measure set in Theorem \ref{Th.General}, and assume  the solution satisfying \eqref{G.genral}  with $E\in S$ has initial conditions given by  $u(0,E)= \sin2\pi x_0(E)$ and $u(1,E)=-\cos2\pi x_0(E)$.  In other words, $ u_{x_0(E)}(\cdot,E)$ satisfies \eqref{G.genral}, where $u_x$ is defined in Section 2.2.

Since the spectrum of $H_{\lambda,\alpha,\theta}$ is in  $[-2-2\lambda,2+2\lambda]$,   we always assume $|E|\leq 2+2\lambda$.
\begin{lemma}\label{Le.4transfer}
For almost every $E$ with respect to $\mu$,  we have that for  large $L>0$,
  \begin{equation}\label{KKLKer1}
         ( \max_{x}||u_{x}(\cdot,E)||_{L,0})\cdot( \min_{x}||u_{x}(\cdot,E)||_{L,0}) \geq  L^{1+\frac{1}{2}\frac{\ln \lambda}{t_1\beta}- \varepsilon},
 \end{equation}
 and
 \begin{equation}\label{KKLKer2}
        (  \max_{x}||u_{x}(\cdot,E)||_{0,L})\cdot (\min_{x}||u_{x}(\cdot,E)||_{0,L} )\geq   L^{1+\frac{1}{2}\frac{\ln \lambda}{t_1\beta}- \varepsilon}.
 \end{equation}
\end{lemma}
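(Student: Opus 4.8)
The plan is to run the reasoning of Theorem~\ref{kklthm} in reverse: that theorem converts growth of
$\omega(L):=(\max_x\|u_x(\cdot,E)\|_{L,0})(\min_x\|u_x(\cdot,E)\|_{L,0})$ into a lower bound on the $m$-function, so here we only need a polynomial-in-$L$ lower bound on $\omega(L)$, and we extract it from the partial localization of Section~\ref{SPartiallocalization}. Fix $E$ in the full-measure set $S$ of Theorem~\ref{Th.General}, which we may also take to be a generalized eigenvalue (true for $\mu$-a.e.\ $E$ by Schnol's theorem), and set $\phi:=u_{x_0(E)}(\cdot,E)$. By Theorem~\ref{Th.General}, $\|\phi\|_{L,L}\le C(E)L^{1/2}\ln L$; in particular $\phi$ is a normalized generalized eigenfunction, since $\phi(0)^2+\phi(1)^2=\sin^2 2\pi x_0+\cos^2 2\pi x_0=1$ and $|\phi(n)|\le C(E)(1+|n|)$. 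Together with $\theta$ Diophantine and $1<\lambda\le e^{\beta}$, $0<\beta<\infty$, this means all the estimates recalled in Section~\ref{SPartiallocalization} and the preceding section apply to $\phi$. Let $\psi:=u_{x_0(E)+1/4}(\cdot,E)$.

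First I would reduce the claim to a lower bound on $\psi$. Since $\|u_x\|_{L,0}^2\ge u_x(0)^2+u_x(1)^2=1$ for $L\ge1$, and $\max_x\|u_x\|_{L,0}\ge\|\psi\|_{L,0}$, we obtain $\omega(L)\ge\|\psi\|_{L,0}$; by the left--right symmetry of \eqref{KKLKer1}--\eqref{KKLKer2} it then suffices to prove $\|\psi\|_{L,0}^2\ge L^{2+\frac{\ln\lambda}{t_1\beta}-2\varepsilon}$ for all large $L$ and the mirror statement on $(-\infty,0]$. The link between $\psi$ and $\phi$ is the constancy of the Wronskian \eqref{gnov112}: $\phi(n+1)\psi(n)-\phi(n)\psi(n+1)=1$, whence $|\psi(n)|+|\psi(n+1)|\ge(|\phi(n)|+|\phi(n+1)|)^{-1}$. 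So wherever $\phi$ is small, $\psi$ is large, and $\|\psi\|_{L,0}^2=\sum_{0\le k\le L}|\psi(k)|^2$ is driven by the decay of $\phi$.

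Next I would feed in the partial localization. Theorems~\ref{Th.Nonresonant} and \ref{Th.resonant}, together with Lemma~\ref{Le.resonant1} and the polynomial bound of Theorem~\ref{Th.General} controlling the values $r_j$ of $\phi$ near the multiples $jq_n$, give, for each large $n$ and each $k$ in the relevant range at scale $n$, an estimate of the form $|\phi(k)|\le C(E)(1+|k|)\exp(-\gamma_n\,\mathrm{dist}(k,q_n\Z))$, where $\gamma_n$ is bounded below by $\ln\lambda-(1-t_1)\beta-\varepsilon=\sigma\beta-\varepsilon$ on the resonant patches near the $jq_n$ and by $\tfrac19\ln\lambda-\varepsilon$ on the nonresonant part (cf.\ the Remark after Theorem~\ref{Th.Nonresonant}). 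Combining with the Wronskian inequality yields $|\psi(k)|\ge c(E)(1+|k|)^{-1}\exp(\gamma_n\,\mathrm{dist}(k,q_n\Z))$ for $k$ bounded away from $q_n\Z$.

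The hard part is the bookkeeping that turns these scale-by-scale exponential gains into a clean power law uniform in $L$. Given large $L$, let $q_n$ be the largest continued-fraction denominator with $q_n\le L$; by \eqref{Def.Beta}, $L<q_{n+1}\le e^{(\beta+\varepsilon)q_n}$, so $q_n\ge\ln L/(\beta+\varepsilon)$. One then sums $|\psi(k)|^2$ over the $k\in(0,L]$ in the interiors of the intervals between consecutive multiples of $q_n$ (and, when $L$ reaches them, over the wider scale-$n$ windows up to $\approx q_{n+1}^{t_2}$, and over the resonant windows, which contribute only with the small rate $\sigma\beta-\varepsilon$), optimizing the scale against $L$ via $q_n\ge\ln L/(\beta+\varepsilon)$ and the value $t_1\beta=\beta-\ln\lambda+\sigma\beta$ from \eqref{glast9}. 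The worst case, which pins down the exponent, is $L$ in a ``resonance gap'' just below a scale $n$ with $\ln q_{n+1}/q_n$ close to $\beta$: there the usable decay windows of $\phi$ below $L$ are sparse, the largest usable denominator is of size only $\asymp\ln L$, and one must balance the slow ($\sigma\beta$-rate) contributions on the resonant windows against the fast contributions near $L$. The choice of $t_1$ in \eqref{glast9} is exactly calibrated so that this balance produces the exponent $2+\frac{\ln\lambda}{t_1\beta}$, hence $\omega(L)\ge L^{\,1+\frac12\frac{\ln\lambda}{t_1\beta}-\varepsilon}$, after the customary passage $\varepsilon,\sigma\to0$. Estimate \eqref{KKLKer2} follows from the identical argument applied to the reflected operator.
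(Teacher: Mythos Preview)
Your reduction $\omega(L)\ge\|\psi\|_{L,0}$ via $\min_x\|u_x\|_{L,0}\ge1$ is correct but too lossy, and the target $\|\psi\|_{L,0}^2\ge L^{2+\frac{\ln\lambda}{t_1\beta}-2\varepsilon}$ you commit to cannot be reached from the partial-localization inputs in the critical range. Concretely, take $4q_n\le L\le 20q_n^2q_{n+1}^{t_1}$ with $q_n\le L^{\varepsilon}$ (this is exactly the regime the paper calls Case~3). Here Theorem~\ref{Th.resonant} is vacuous (it needs $|k|>2q_n^2q_{n+1}^{t_1}$), so the only usable decay of $\phi=u_{x_0}$ is Lemma~\ref{Le.resonant1}: $|\phi(jq_n+[q_n/2])|\le \hat r_j^{\phi}\,e^{-\frac12(\ln\lambda-\varepsilon)q_n}$. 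The Wronskian then gives $\|\psi\|_{L,0}^2\ge e^{(\ln\lambda-\varepsilon)q_n}\sum_{j<p}(\hat r_j^{\phi})^{-2}$ with $p\approx L/q_n$. Controlling $\hat r_j^{\phi}$ only through Theorem~\ref{Th.General} yields, via Cauchy--Schwarz, $\sum_j(\hat r_j^{\phi})^{-2}\ge p^2/\sum_j(\hat r_j^{\phi})^2\gtrsim p^2/(L\ln^2L)$, hence
\[
\|\psi\|_{L,0}^2\ \gtrsim\ e^{(\ln\lambda-\varepsilon)q_n}\,\frac{L}{q_n^2\ln^2L}\ \ge\ L^{\,1+\frac{\ln\lambda}{t_1\beta}-C\varepsilon},
\]
using $L\le 20q_n^2q_{n+1}^{t_1}\le e^{(t_1\beta+\varepsilon)q_n+O(\ln q_n)}$. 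This is a full factor of $L$ short of what you need, and the ``balancing'' paragraph does not explain where that factor would come from; indeed, for any orthogonal pair one has $\|u_a\|_{L,0}\|u_{a+1/4}\|_{L,0}\ge\omega(L)$, so one cannot in general bound $\omega(L)$ below by $\|u_{x_0+1/4}\|_{L,0}\|u_{x_0}\|_{L,0}$ either.

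The paper closes this gap by abandoning $x_0$ in this range and working instead with the \emph{minimizer} $x_1$, for which $\omega(L)=\|u_{x_1}\|_{L,0}\|u_{x_1+1/4}\|_{L,0}$ exactly. Lemma~\ref{Le.resonant1} applies to the arbitrary solution $u_{x_1}$, giving $\|u_{x_1+1/4}\|_{L,0}^2\ge e^{(\ln\lambda-\varepsilon)q_n}\sum_j(\hat r_j)^{-2}$ with $\hat r_j=\hat r_j^{\,u_{x_1}}$, while trivially $\|u_{x_1}\|_{L,0}^2\ge e^{-\varepsilon q_n}\sum_j\hat r_j^2$. The product then picks up the missing $L^2/q_n^2$ from Cauchy--Schwarz, $\bigl(\sum_j\hat r_j^{-2}\bigr)\bigl(\sum_j\hat r_j^2\bigr)\ge p^2$, yielding $\omega(L)^2\ge L^{2+\frac{\ln\lambda}{t_1\beta}-\varepsilon}$. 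Your approach does succeed when $L>20q_n^2q_{n+1}^{t_1}$ (paper's Case~2), because there Theorem~\ref{Th.resonant} produces a single point where $|\psi|$ is already super-polynomially large; but to cover all large $L$ you need the minimizer-plus-product argument.
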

\begin{proof}
 We only prove  \eqref{KKLKer1}, the proof of \eqref{KKLKer2} is similar.

Find $n$ such that  $4q_n\leq L<4q_{n+1}$.  Let $u_{x_1}(\cdot,E)$  be such that
  \begin{equation*}
    ||u_{x_1}(\cdot,E)||_{L,0}=\min_{x}||u_{x}(\cdot,E)||_{L,0}.
\end{equation*}

 Case 1: $   b_{n+1}\leq  20q_n^2q_{n+1}^{t_1}$ (recalling that $b_n=q_n^{t_2}$), then

 \begin{equation*}
 L\leq  4q_{n+1}\leq 4(20q^2_n)^{\frac{1}{t_2-t_1}}.
\end{equation*}
 Applying  (\ref{Intervalk}) with $\varphi=u_{x_1}$,  one has that
 \begin{equation*}
 |u_{x_1}([\frac{q_n}{2}])|,|u_{x_1}([\frac{q_n}{2}]-1)|\leq  \max\{\tilde{r}_0,\tilde{r}_{1}\}e^{-\frac{1}{2}(\ln\lambda-\varepsilon) q_n},
 \end{equation*}
  where $\tilde{r}_j= r_j^{u_{x_1}}=\sup_{|r|\leq 10\varepsilon }|u_{x_1}(jq_n+rq_n)|$, $j=0,1$.
 
 By the constancy of the Wronskian \eqref{gnov112}, one has  that
 \begin{equation*}
 \max\{   |u_{x_1+1/4}([\frac{q_n}{2}])|,|u_{x_1+1/4}([\frac{q_n}{2}]-1)|\}\geq \frac{e^{ \frac{1}{2} (\ln\lambda-\varepsilon) q_n}}{\hat{r}_{0}},
 \end{equation*}
 where  $\hat{r}_{0}=\max\{\tilde{r}_0,\tilde{r}_{1}\}$.
 This leads that
 \begin{equation}\label{Gct12}
 ||u_{x_1+1/4}||_{L,0} ^2\geq   \frac{e^{  (\ln\lambda-\varepsilon) q_n}}{\hat{r}_{0}^2}.
 \end{equation}
 
 Clearly by \eqref{g500}, one has that
 \begin{equation}\label{Gct13}
  \hat{r}_{0}^2 \leq  e^{\varepsilon q_n} ||u_{x_1}||_{L,0}^2.
 \end{equation}
 By \eqref{Gct12} and \eqref{Gct13}, one has
 \begin{eqnarray}
 \nonumber ||u_{x_1+1/4}||_{L,0} ^2||u_{x_1}||_{L,0}^2&\geq&   \ e^{  (\ln\lambda-\varepsilon) q_n}\label{glast7} \\
 &\geq&    L ^{2+\frac{\ln \lambda}{\beta t_1}-\varepsilon}\label{Gct22}.
 \end{eqnarray}

This implies
\eqref{KKLKer1}.

 Case 2:
 $   20q_n^2q_{n+1}^{t_1}\leq  b_{n+1}$, $L>20q_n^2q_{n+1}^{t_1}$. In this case, there exists some $$k\in[3q_n^2q_{n+1}^{t_1}-q_n,3q_n^2q_{n+1}^{t_1}]$$ that is $n$-resonant.  Recall that $u_{x_0}$ satisfies
 (\ref{G.genral}).    By Theorem \ref{Th.resonant}, it is easy to see
 \begin{eqnarray*}
  |u_{x_0}(k)|,|u_{x_0}(k-1)| &\leq &  \exp\{-(\ln \lambda-(1-t_1)\beta-\varepsilon)k\}  \\
    &\leq&  \exp\{-(\ln \lambda-(1-t_1)\beta-\varepsilon)2q_n^2q_{n+1}^{t_1}\} \\
    &\leq&  \exp\{-(\ln \lambda-(1-t_1)\beta-\varepsilon)\frac{2}{4^{t_1}}q_n^2L^{t_1}\},
 \end{eqnarray*}
 where the third inequality uses the fact that $L< 4q_{n+1}$.
 
 By the constancy of the Wronskian \eqref{gnov112}, we have
 \begin{equation*}
 \max{ \{|u_{x_0+1/4}(k)|,|u_{x_0+1/4}(k-1)|\}}\geq  \exp\{(\ln \lambda-(1-t_1)\beta-\varepsilon)\frac{2}{4^{t_1}}q_n^2L^{t_1}\}.
 \end{equation*}
 This clearly leads to
 \begin{equation*}
 ||u_{x_0+1/4}(\cdot,E)||_{L,0}\geq   L^{1+\frac{1}{2}\frac{\ln \lambda}{t_1\beta}-\varepsilon}.
 \end{equation*}
 This implies
 \begin{eqnarray*}
 	\max_{x}||u_{x}(\cdot,E)||_{L,0}\cdot \min_{x}||u_{x}(\cdot,E)||_{L,0}  &\geq&  \max_{x}||u_{x}(\cdot,E)||_{L,0} \\
 	&\geq&  ||u_{x_0+1/4}(\cdot,E)||_{L,0}\\
 	&\geq&   L^{1+\frac{1}{2}\frac{\ln \lambda}{t_1\beta}-\varepsilon}.
 \end{eqnarray*}

 \par
 Case 3:
 $   L\leq 20q_n^2q_{n+1}^{t_1}\leq  b_{n+1}$. We may assume $  q_n\leq L^{\varepsilon}$, otherwise this was covered by Case 1 (see \eqref{glast7}).
 Let $p= [ \frac{L}{4q_n}]>1$.
 Applying  Lemma \ref{Le.resonant1} with  $\varphi=u_{x_1}$, we have that for $j=0,\cdots p-1$,
 \begin{equation*}
    |u_{x_1}(jq_n+[\frac{q_n}{2}])|,|u_{x_1}(jq_n+[\frac{q_n}{2}]-1)|\leq  \max\{\tilde{r}_j,\tilde{r}_{j+1}\}e^{-\frac{1}{2}(\ln\lambda-\varepsilon) q_n},
 \end{equation*}
  where $\tilde{r}_j= r_j^{u_{x_1}}=\sup_{|r|\leq 10\varepsilon }|u_{x_1}(jq_n+rq_n)|$.

 By the constancy of the Wronskian again \eqref{gnov112}, one has that for  $j=0,1,\cdots,p-1$,
 \begin{equation*}
   \max\{   |u_{x_1+1/4}(jq_n+[\frac{q_n}{2}])|,|u_{x_1+1/4}(jq_n+[\frac{q_n}{2}]-1)|\}\geq \frac{e^{ \frac{1}{2} (\ln\lambda-\varepsilon) q_n}}{\hat{r}_{j}},
 \end{equation*}
 where  $\hat{r}_{j}=\max\{\tilde{r}_j,\tilde{r}_{j+1}\}$.
 This leads to
 \begin{equation}\label{Gct}
     ||u_{x_1+1/4}||_{L,0} ^2\geq \sum_{j=0}^{p-1}  \frac{e^{  (\ln\lambda-\varepsilon) q_n}}{\hat{r}_{j}^2}.
 \end{equation}
\par
Clearly by \eqref{g500}, one has
 \begin{equation}\label{Gct1}
    \sum_{j=0}^{p-1} \hat{r}_{j}^2 \leq  e^{\varepsilon q_n} ||u_{x_1}||_{L,0}^2.
 \end{equation}
 By \eqref{Gct} and \eqref{Gct1}, one has
 \begin{eqnarray}
 \nonumber ||u_{x_1+1/4}||_{L,0} ^2||u_{x_1}||_{L,0}^2&\geq&    e^{  (\ln\lambda-\varepsilon) q_n} (\sum_{j=0}^{p-1} \frac{1}{\hat{r}_{j}^2} )(\sum_{j=0}^{p-1} \hat{r}_{j}^2 )\\
  \nonumber &\geq&   \frac{e^{  (\ln\lambda-\varepsilon) q_n}}{q_n^2} L^2\\
    &\geq&    L ^{2+\frac{\ln \lambda}{\beta t_1}-\varepsilon},\label{Gct2}
 \end{eqnarray}
 where the third inequality holds by the fact that $L\leq 20q_n^2q_{n+1}^{t_1}$ and $  q_n^2\leq L^{\varepsilon}$.
 Now  (\ref{KKLKer1}) follows from \eqref{Gct2}.

\end{proof}
 
\section{  Proof of Theorems \ref{Maintheoremp}  and \ref{Maintheoremp1} }
Recall that  $\tilde {m}_j(z)$  $j=1,2$ are the $m$ functions of the right and left half line problems with boundary condition
\eqref{mbd}.

By   \eqref{glast12}, \eqref{glast13}, \eqref{glast14} and \eqref{glast15}, one has that
\begin{equation}\label{lastg6}
    M_{1}=\frac{\tilde {m}_1 \tilde {m}_2\sin^22\pi x_0-\tilde {m}_1\cos2\pi x_0\sin2\pi x_0+\tilde {m}_2\cos2\pi x_0\sin2\pi x_0-\cos^22\pi x_0}{\tilde {m}_1+\tilde {m}_2},
\end{equation}
and
\begin{equation*}
    M_{2}=\frac{\tilde {m}_1 \tilde {m}_2\cos^22\pi x_0+\tilde {m}_1\cos2\pi x_0\sin2\pi x_0-\tilde {m}_2\cos2\pi x_0\sin2\pi x_0-\sin^22\pi x_0}{\tilde {m}_1+\tilde {m}_2}.
\end{equation*}
 
\begin{lemma}\label{Le.im}
	Let $t\in(0,1)$. 
Assume that $\Im \tilde {m}_j(E+i\eps)\geq  \eps^{-t}$ for  small $\epsilon>0$ (the smallness depends on $E$ and $t$). Then
for    small $\epsilon>0$ , we have
\begin{equation}\label{G.scalM1}
    \Im M_1(E+i\eps)\geq  \eps^{-t},\;\;{\rm if \;\;} 2x_0\notin\Z,
\end{equation}
and 
\begin{equation}\label{G.scalM2}
    \Im M_2(E+i\eps)\geq   \eps^{-t} \;\;{\rm if \;\;} 2x_0\notin \frac{1}{2}+\Z.
\end{equation}
\end{lemma}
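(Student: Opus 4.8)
The plan is to write $\Im M_1(E+i\eps)$ and $\Im M_2(E+i\eps)$ explicitly in terms of $\tilde m_1,\tilde m_2$ via the identities recalled in Section~2, and then exploit that the hypothesis forces $\Im\tilde m_j(E+i\eps)\ge\eps^{-t}\to\infty$ as $\eps\to0+$.

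Write $c=\cos2\pi x_0$, $s=\sin2\pi x_0$. By \eqref{glast12} and \eqref{glast13}, $m_1$ and $m_2$ are images of $\tilde m_1$ and $\tilde m_2$ under M\"obius transformations with real coefficients and determinant $c^2+s^2=1$; hence
\[
\Im m_1=\frac{\Im\tilde m_1}{|s\tilde m_1+c|^2},\qquad \Im m_2=\frac{\Im\tilde m_2}{|s\tilde m_2-c|^2},
\]
while a direct computation gives $m_1+m_2=(\tilde m_1+\tilde m_2)/\bigl((s\tilde m_1+c)(c-s\tilde m_2)\bigr)$. Substituting into \eqref{glast14} and using $\Im(-1/w)=\Im w/|w|^2$ for $w=m_1+m_2$ (which has positive imaginary part) yields the exact identity
\[
\Im M_1(E+i\eps)=\frac{\Im\tilde m_1\,|s\tilde m_2-c|^2+\Im\tilde m_2\,|s\tilde m_1+c|^2}{|\tilde m_1+\tilde m_2|^2}.
\]
Since after \eqref{glast12}--\eqref{glast15} the formula for $M_2$ is obtained from that for $M_1$ by the substitution $(c,s)\mapsto(-s,c)$, which preserves $c^2+s^2=1$, the same computation gives
\[
\Im M_2(E+i\eps)=\frac{\Im\tilde m_1\,|c\tilde m_2+s|^2+\Im\tilde m_2\,|c\tilde m_1-s|^2}{|\tilde m_1+\tilde m_2|^2}.
\]

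Next I would bound these from below. Write $\tilde m_j=u_j+iv_j$ with $v_j=\Im\tilde m_j\ge\eps^{-t}$. The numerator of $\Im M_1$ equals the manifestly nonnegative quantity $v_2(su_1+c)^2+v_1(su_2-c)^2+s^2v_1v_2(v_1+v_2)$. Using $(su_j\pm c)^2\ge\tfrac12 s^2u_j^2-c^2$, splitting $s^2v_1v_2(v_1+v_2)$ into two halves, and using $v_1v_2\ge\eps^{-2t}\ge 2c^2/s^2$ for $\eps$ small (to absorb the $-c^2(v_1+v_2)$ terms), one checks this numerator is at least $\tfrac{s^2}{4}\min(v_1,v_2)\,(u_1^2+u_2^2+v_1^2+v_2^2)$, while $|\tilde m_1+\tilde m_2|^2\le 2(u_1^2+u_2^2+v_1^2+v_2^2)$. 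Hence for $\eps$ small (depending on $x_0$, and on $E,t$ through the hypothesis) $\Im M_1(E+i\eps)\ge\tfrac{s^2}{8}\min(v_1,v_2)\ge\tfrac18\sin^2(2\pi x_0)\,\eps^{-t}$, a positive power bound exactly when $\sin2\pi x_0\ne0$, i.e. $2x_0\notin\Z$; the constant $\tfrac18\sin^2(2\pi x_0)$ is harmless and can be absorbed by shrinking $\eps$ at the cost of an arbitrarily small decrease of the exponent, which gives \eqref{G.scalM1}. Running the identical argument on the formula for $\Im M_2$ (i.e. substituting $(c,s)\mapsto(-s,c)$) yields $\Im M_2(E+i\eps)\ge\tfrac18\cos^2(2\pi x_0)\,\eps^{-t}$, positive exactly when $\cos2\pi x_0\ne0$, i.e. $2x_0\notin\tfrac12+\Z$, giving \eqref{G.scalM2}.

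The one genuinely delicate point is the lower bound on the numerator: inside $v_2|s\tilde m_1+c|^2$ and $v_1|s\tilde m_2-c|^2$ sit cross terms $\pm2sc\,v_ju_k$ that are not sign-definite, and a priori $|u_j|=|\Re\tilde m_j|$ can be much larger than $v_j$, so a naive estimate that simply drops the real parts is far too lossy (it produces only $\eps^{1-2t}$ or worse, which never reaches $\eps^{-t}$ for $t<1$). The rewriting of the numerator as $v_2(su_1+c)^2+v_1(su_2-c)^2+s^2v_1v_2(v_1+v_2)$ is exactly what resolves this, since it shows the dangerous cross terms are absorbed into perfect squares; I expect that identity together with the elementary inequalities indicated to be the crux, with everything else reducing to bookkeeping with the relations \eqref{glast12}--\eqref{glast15}.
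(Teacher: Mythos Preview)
Your proof is correct and follows essentially the same route as the paper. Both arguments compute $\Im M_1$ from the identity \eqref{lastg6} (your formula is exactly the same expression, just regrouped), observe that the only dangerous contributions are the sign-indefinite cross terms $\pm 2sc\,u_jv_k$, and absorb them using the largeness of $v_j=\Im\tilde m_j$; the paper does this via Cauchy--Schwarz together with $b_j\ge 2|c/s|$, while you do it via the perfect-square rewriting and $v_1v_2\ge 2c^2/s^2$, arriving at the same conclusion $\Im M_1\ge c(x_0)\sin^2(2\pi x_0)\min(v_1,v_2)$ (the paper's constant is $\tfrac{1}{16}$). Your M\"obius-transform derivation of the identity and the $(c,s)\mapsto(-s,c)$ symmetry for $M_2$ are slightly slicker than the paper's direct computation, but these are cosmetic differences.
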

\begin{proof}

Let $\tilde {m}_j(z)=a_j(z)+ib_j(z)$, $j=1,2$, then
\begin{equation}\label{lastg5}
b_j(z)=b_j(E+i\epsilon)\geq  \epsilon^{-t},j=1,2.
\end{equation}

 We only give the proof of (\ref{G.scalM1}), the proof \eqref{G.scalM2} being similar.


Note that for small $\epsilon$, 
\begin{equation}\label{G.smalleps}
    b_1,b_2\geq 2\left|\frac{\cos2\pi x_0}{\sin2\pi x_0}\right|.
\end{equation}
Therefore,  we have
\begin{eqnarray*}
   \Im M_1 &=& \frac{(a_2^2b_1+a_1^2b_2+b_1^2b_2+b_1b_2^2)\sin^22\pi x_0-2\sin2\pi x_0\cos2\pi x_0(a_2b_1-a_1b_2)}{(a_1+a_2)^2+(b_1+b_2)^2} \\
   &&+\frac{(b_1+b_2)\cos^22\pi x_0}{(a_1+a_2)^2+(b_1+b_2)^2}\\
   &\geq& \frac{(a_2^2b_1+a_1^2b_2+b_1^2b_2+b_1b_2^2)\sin^22\pi x_0-2\sin2\pi x_0\cos2\pi x_0(a_2b_1-a_1b_2)}{(a_1+a_2)^2+(b_1+b_2)^2}\\
   &\geq& \frac{(a_2^2b_1+a_1^2b_2+b_1^2b_2+b_1b_2^2)\sin^22\pi x_0-|\sin2\pi x_0\cos2\pi x_0|(a_1^2+a_2^2+b_1^2+b_2^2)}{(a_1+a_2)^2+(b_1+b_2)^2}\\
     &\geq& \frac{\sin^22\pi x_0}{2}\frac{(a_2^2b_1+a_1^2b_2+b_1^2b_2+b_1b_2^2)}{(a_1+a_2)^2+(b_1+b_2)^2}\\
      &\geq&\frac{\sin^22\pi x_0}{16}\min\{b_1,b_2\},
\end{eqnarray*}
where the first inequality holds by the fact that $b_1,b_2\geq 0$, the second one holds by Cauchy–Schwarz inequality, the third one holds by
(\ref{G.smalleps}).
This leads to (\ref{G.scalM1}).
 
\end{proof}

\begin{lemma}\label{Le.im1}
Fix  any $0<t<\frac{\ln \lambda}{2\beta-\ln\lambda}$.  Then for almost every $E$ with respect to measure $\mu_{\delta_1}+\mu_{\delta_0}$,  one has that for small $\epsilon>0$,
\begin{equation*}
    \Im \tilde {m}_{j}(E+i\eps)\geq   \eps^{-t},j=1,2.
\end{equation*}
\begin{proof}
	We only give the proof for $j=1$. The proof for $j=2$ is similar.

Let 

\begin{eqnarray*}
  a(L) &=& ||u_{1/4+x_0}||^2_{L,0} ,\\
  b(L) &=& ||u_{x_0}||^2_{L,0}, \\
  \omega(L) &=& \max_{x}||u_x(\cdot,E)||_{L,0}\cdot\min_{x}||u_{x}(\cdot,E)||_{L,0}.
\end{eqnarray*}
Recall that $u_{x_0}$ satisfies \eqref{G.genral}.  Then, we have
\begin{equation}\label{G.bL}
    b(L)\leq C(E)L^{1+\varepsilon}.
\end{equation}
Let $L(\eps)$ be such that
\begin{equation}\label{G.L}
  \omega(L(\eps))=\frac{1}{\eps}.
\end{equation}
By Lemma \ref{Le.4transfer}, one has that for   small $\epsilon$, 
\begin{equation*}
    \frac{1}{\eps}=
          \max_{x}||u_{x}(\cdot,E)||_{L(\eps),0}\cdot \min_{x}||u_{x}(\cdot,E)||_{L(\eps),0} \geq  L(\epsilon)^{1+g},
\end{equation*}
where $g=\frac{1}{2}\frac{\ln \lambda}{t_1\beta}- \varepsilon$. Thus
\begin{equation}\label{G.UL}
    L(\eps)\leq \eps^{-\frac{1}{1+g}}.
\end{equation}
By  Theorem \ref{kklthm},  we have that
\begin{equation}\label{gnov111}
    \Im \tilde {m}_1(E+i\eps)\geq \frac{1}{C} \frac{1}{\eps} \frac{1}{b(L(\eps))}.
\end{equation}
By (\ref{G.bL}) (\ref{G.L}), (\ref{G.UL}) and \eqref{gnov111}, we have that
\begin{eqnarray*}
 \Im \tilde {m}_1(E+i\epsilon )&\geq& \frac{1}{C} \frac{1}{ \epsilon L(\eps)^{1+\varepsilon}}   \\
   &\geq&  \eps^{-t_0} ,
\end{eqnarray*}
where $t_0=1-(1+\varepsilon)/(1+g)$.
 
Now Lemma  \ref{Le.im1} holds by letting $\varepsilon,\sigma$ go to $0$.
\end{proof}

\begin{lemma}\label{lelast}
	Recall that  $u_{x_0(E)}$ satisfies  \eqref{G.genral}.
	Let 
	\begin{equation}
	S_0=\{E: 2x_0(E) \in\Z \}
	\end{equation}
	and 
	\begin{equation}
	S_1=\{E: 2x_0(E) \in\frac{1}{2}+\Z \}.
	\end{equation}
	Then we have
	\begin{equation}\label{lastg}
	\mu_{\delta_0}(S_0)=0
	\end{equation}
	and
	\begin{equation}\label{lastg1}
	\mu_{\delta_1}(S_1)=0.
	\end{equation}
\end{lemma}
\begin{proof}
	We only prove \eqref{lastg}. The proof of \eqref{lastg1} is similar.
	By Lemma \ref{Le.im1}, we have that for almost every $E$ respect to $\mu_{\delta_0}+\mu_{\delta_1}$ (recall that $\tilde {m}_j(z)=a_j(z)+ib_j(z)$, $j=1,2$),
	\begin{equation}
b_1(z)	=b_1(E+i\epsilon)\geq \epsilon^{-t}, 	b_2(z)=b_2(E+i\epsilon)\geq \epsilon^{-t}.
	\end{equation}
	
	Assume that $2x_0(E)\in\Z$. Then  
	by \eqref{lastg6},  one has that
	\begin{equation*}
	\Im M_1(z)=\frac{b_1+b_2}{(a_1+a_2)^2+(b_1+b_2)^2},
	\end{equation*}
	which implies that 
		\begin{equation*}
\lim_{\epsilon\to 0+}	\Im M_1(z)=0.
	\end{equation*}
	By basic spectral theory, $\mu_{\delta_0}(S_1)=0$.
\end{proof}

\end{lemma}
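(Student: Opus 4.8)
\textbf{Proof proposal for Lemma~\ref{lelast}.}

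The plan is to combine the lower bound on $\Im\tilde m_j$ furnished by Lemma~\ref{Le.im1} with the explicit expression~\eqref{lastg6} for $M_1$ (and its analogue for $M_2$) in terms of $\tilde m_1,\tilde m_2$, and then read off the desired null-measure statement from the boundary behaviour of the Borel transforms via the classical Fatou/de~la~Vall\'ee~Poussin theory. First I would fix some $t$ with $0<t<\frac{\ln\lambda}{2\beta-\ln\lambda}$, which is possible since $\lambda>1$, and let $S\subset\R$ be the set of full measure with respect to $\mu=\mu_{\delta_0}+\mu_{\delta_1}$ on which Lemma~\ref{Le.im1} gives $\Im\tilde m_j(E+i\eps)\ge\eps^{-t}$ for $j=1,2$ and all sufficiently small $\eps>0$; writing $\tilde m_j=a_j+ib_j$ this reads $b_j(E+i\eps)\ge\eps^{-t}$. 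Since $\mu_{\delta_0}\le\mu$, we have $\mu_{\delta_0}(\R\setminus S)=0$, so it suffices to show $\mu_{\delta_0}(S_0\cap S)=0$.

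For the algebraic reduction, note that if $E\in S_0$ then $2x_0(E)\in\Z$, so $\sin2\pi x_0(E)=0$ and $\cos^2 2\pi x_0(E)=1$, and~\eqref{lastg6} collapses to $M_1(z)=-1/(\tilde m_1(z)+\tilde m_2(z))$. Hence
\[
\Im M_1(E+i\eps)=\frac{b_1+b_2}{(a_1+a_2)^2+(b_1+b_2)^2}\le\frac{1}{b_1+b_2},
\]
and for $E\in S_0\cap S$ the right-hand side is at most $\tfrac12\eps^{t}\to0$ as $\eps\to0+$. Thus $S_0\cap S$ is contained in the set $T=\{E:\lim_{\eps\to0+}\Im M_1(E+i\eps)=0\}$.

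It remains to see that $\mu_{\delta_0}(T)=0$, which I would obtain from the standard boundary-value theory of the Herglotz function $M_1(z)=\int d\mu_{\delta_0}(x)/(x-z)$: the singular part of $\mu_{\delta_0}$ is carried by the set where $\Im M_1(E+i\eps)\to+\infty$ (using $\Im M_1(E+i\eps)\ge\mu_{\delta_0}((E-\eps,E+\eps))/(2\eps)$ together with the fact that a measure singular with respect to Lebesgue has infinite lower symmetric derivative at almost every point with respect to itself), so $T$ carries none of it; and for Lebesgue-a.e.\ $E$ the limit $\lim_{\eps\to0+}\Im M_1(E+i\eps)$ equals $\pi$ times the density of the absolutely continuous part of $\mu_{\delta_0}$, which must then vanish Lebesgue-a.e.\ on $T$, so $T$ carries none of the a.c.\ part either. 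Hence $\mu_{\delta_0}(T)=0$, and therefore $\mu_{\delta_0}(S_0\cap S)=0$, giving~\eqref{lastg}. The proof of~\eqref{lastg1} is verbatim the same with the roles of sine and cosine exchanged: when $2x_0(E)\in\tfrac12+\Z$ one has $\cos2\pi x_0(E)=0$ and $\sin^2 2\pi x_0(E)=1$, the formula for $M_2$ collapses to the same expression $-1/(\tilde m_1+\tilde m_2)$, so $\Im M_2(E+i\eps)\to0$ on $S_1\cap S$ and the same argument yields $\mu_{\delta_1}(S_1)=0$.

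The only step that is not purely mechanical is the last one, namely deducing $\mu_{\delta_0}(T)=0$ from the vanishing of the boundary value of $\Im M_1$ on $T$; here one must carefully separate the absolutely continuous and singular parts and apply the appropriate form of the Fatou/de~la~Vall\'ee~Poussin differentiation theorem (it helps that we have produced a genuine limit, not merely a $\liminf$). Everything else is elementary algebra with the identities for $M_1$ and $M_2$ recorded above together with the input from Lemma~\ref{Le.im1}.
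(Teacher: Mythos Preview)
Your argument is correct and follows essentially the same route as the paper: reduce $M_1$ (resp.\ $M_2$) via \eqref{lastg6} to $-1/(\tilde m_1+\tilde m_2)$ on $S_0$ (resp.\ $S_1$), use Lemma~\ref{Le.im1} to force $\Im M_j(E+i\eps)\to 0$ there, and conclude zero spectral measure. The paper simply invokes ``basic spectral theory'' for the last step, whereas you have spelled out the Herglotz boundary-value argument (singular part carried where $\Im M\to\infty$, a.c.\ density recovered from the nontangential limit) explicitly; this is a welcome clarification rather than a different method.
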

 Lemmas    \ref{Le.im}, \ref{Le.im1}  and \ref{lelast} imply
\begin{theorem}\label{Th.scaling}
Fix  any $0<t<\frac{\ln \lambda}{2\beta-\ln\lambda}$. Then  for almost every $E$ with respect to spectral measure $\mu_{\delta_0}+\mu_{\delta_1}$, the following holds for small $\epsilon>0$,
\begin{equation}\label{G.imM1}
    \Im M_1(E+i\epsilon)\geq  \epsilon^{-t}, 
\end{equation}
and 
\begin{equation}\label{G.imM2}
    \Im M_2(E+i\eps)\geq  \eps^{-t}.
\end{equation}
\end{theorem}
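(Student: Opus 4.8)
The plan is to obtain Theorem \ref{Th.scaling} by assembling the three preceding lemmas, with essentially no new computation. Fix $t$ with $0<t<\frac{\ln\lambda}{2\beta-\ln\lambda}$; since in the regime $\ln\lambda<\beta$ this constrains $t$ to an open subinterval of $(0,1)$, I may first choose an auxiliary exponent $t'$ with $t<t'<\frac{\ln\lambda}{2\beta-\ln\lambda}$. The first step is to invoke Lemma \ref{Le.im1} with the exponent $t'$: it produces a set of energies of full $\mu_{\delta_0}+\mu_{\delta_1}$-measure on which $\Im\tilde{m}_j(E+i\eps)\ge\eps^{-t'}$ for $j=1,2$ and all sufficiently small $\eps>0$ (the threshold depending on $E$). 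Note that Lemma \ref{Le.im1} has already absorbed the auxiliary limits $\varepsilon,\sigma\to 0$, so this input is available for every admissible $t'$.

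The second step is to push this bound through Lemma \ref{Le.im}. For an energy $E$ in the set just produced, as long as $2x_0(E)\notin\Z$ the lemma turns $\Im\tilde{m}_j\ge\eps^{-t'}$ into $\Im M_1(E+i\eps)\gtrsim_E\eps^{-t'}$, where the $E$-dependent constant is a positive power of $|\sin 2\pi x_0(E)|$; this constant is harmless because $\eps^{-t'}$ dominates $C(E)^{-1}\eps^{-t}$ for small $\eps$, so in fact $\Im M_1(E+i\eps)\ge\eps^{-t}$ for small $\eps$. Symmetrically, when $2x_0(E)\notin\frac{1}{2}+\Z$ one gets $\Im M_2(E+i\eps)\ge\eps^{-t}$. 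The exceptional phase sets $S_0=\{E:2x_0(E)\in\Z\}$ and $S_1=\{E:2x_0(E)\in\frac{1}{2}+\Z\}$, on which one or the other half of Lemma \ref{Le.im} breaks down, are then removed using Lemma \ref{lelast}: $\mu_{\delta_0}(S_0)=0$ and $\mu_{\delta_1}(S_1)=0$. Combining these, off $S_0$ the bound on $\Im M_1$ holds $\mu_{\delta_0}$-a.e.\ and off $S_1$ the bound on $\Im M_2$ holds $\mu_{\delta_1}$-a.e., yielding \eqref{G.imM1}--\eqref{G.imM2}.

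The one point that needs care — and the only place where the argument is more than a chain of citations — is the measure-theoretic bookkeeping in the last step: one must keep track of which component of $\mu_{\delta_0}+\mu_{\delta_1}$ each null set is being discarded against, since by \eqref{lastg6} one sees that on $S_0$ one has $\Im M_1(E+i\eps)\to 0$ (so the $M_1$ bound genuinely relies on $\mu_{\delta_0}(S_0)=0$ and cannot be dispensed with), and dually on $S_1$ for $M_2$. Once this is arranged, together with the trivial relaxation $t'\mapsto t$ of step two, the proof is complete; everything else is a direct application of Lemmas \ref{Le.im}, \ref{Le.im1} and \ref{lelast}.
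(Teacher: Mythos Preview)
Your proposal is correct and follows exactly the paper's approach: the paper's entire proof is the single line ``Lemmas \ref{Le.im}, \ref{Le.im1} and \ref{lelast} imply,'' and your argument is a careful unpacking of precisely how those three lemmas combine, including the $t'\mapsto t$ relaxation needed to absorb the $E$-dependent constant $\sin^2 2\pi x_0(E)/16$ hidden in the proof of Lemma \ref{Le.im}. Your bookkeeping (that the argument literally yields \eqref{G.imM1} for $\mu_{\delta_0}$-a.e.\ $E$ and \eqref{G.imM2} for $\mu_{\delta_1}$-a.e.\ $E$, which is all that the downstream application to Corollary \ref{CD+} requires) is more explicit than the paper's, but the underlying route is identical.
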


\begin{proof}[\bf Proof of Theorem \ref{Maintheoremp}]
	 
By Theorem \ref{Th.scaling}, we have for any $\varsigma \in(\frac{2\beta-2\ln\lambda}{2\beta-\ln\lambda},1)$, and almost every $E$  with respect to spectral measure $\mu_{\delta_0}+\mu_{\delta_1}$, 
\begin{equation}\label{mera04E}
\eps^2 \int_{\R} {d \mu_{\delta_j}(y) \over |E-y|^2 +\eps^2}= \eps\Im M_j(E+i\eps)\ge  \eps^\varsigma,
\end{equation}
where $j=0,1$.
Applying Corollary \ref{CD+} with $m=2$, one has that
\begin{equation*}
   {\rm dim}^+_P(\mu_{\delta_j}) \le m \varsigma /(m-\varsigma), j=0,1.
\end{equation*}
It implies for $j=0,1$,
\begin{eqnarray}
   {\rm dim}^+_P(\mu_{\delta_j}) &\leq&  \frac{2\frac{2\beta-2\ln\lambda}{2\beta-\ln\lambda}}{2-\frac{2\beta-2\ln\lambda}{2\beta-\ln\lambda}}\nonumber\\
   &=& \frac{2\beta-2\ln\lambda}{\beta}.\label{glast10}
\end{eqnarray}
By \eqref{glast11} and \eqref{glast10}, one has that  for any $\varphi\in \ell^2(\Z)$,
\begin{equation*}
   {\rm dim}^+_P(\mu_{\varphi})\leq \frac{2\beta-2\ln\lambda}{\beta}.
\end{equation*}
This completes the proof of Theorem \ref{Maintheoremp}.
\end{proof}

 \begin{proof}[\bf Proof of Theorem \ref{Maintheoremp1}]
 	It follows from Theorem \ref{upD+},  \eqref{mera04E} and \eqref{glast11}.
 \end{proof}
 
 \section*{Acknowledgments}
W. Liu was a 2024-2025 Simons fellow.
W. Liu would like to thank the Department of Mathematics at UC Berkeley for their hospitality, where this work was completed during his visit.

WL's work was supported in part by NSF DMS-2246031,  DMS-2052572 and  DMS-2000345. 
 SJ’s
work was supported by NSF DMS-2052899, DMS-2155211, and Simons 896624.  SJ and WL are also grateful to UC Irvine and the Isaac Newton Institute for Mathematical Sciences,
Cambridge, for its hospitality, supported by EPSRC Grant Number EP/K032208/1, during
the 2015 programme Periodic and Ergodic Spectral Problems, where the bulk of this work was done.

\end{document}